\numberwithin{equation}{section}
\numberwithin{figure}{section}
\theoremstyle{plain}
\newtheorem{thm}{\protect\theoremname}[section]
\theoremstyle{definition}
\newtheorem{defn}[thm]{\protect\definitionname}
\theoremstyle{remark}
\newtheorem{rem}[thm]{\protect\remarkname}
\theoremstyle{plain}
\newtheorem{lem}[thm]{\protect\lemmaname}
\theoremstyle{definition}
\newtheorem{condition}{\protect\conditionname}
\theoremstyle{plain}
\newtheorem{prop}[thm]{\protect\propositionname}
\theoremstyle{plain}
\newtheorem{cor}[thm]{\protect\corollaryname}
\theoremstyle{definition}
\newtheorem{example}[thm]{\protect\examplename}
\numberwithin{equation}{section}
\DeclareMathAlphabet{\mathcal}{OMS}{cmsy}{m}{n}
  \providecommand{\corollaryname}{Corollary}
  \providecommand{\lemmaname}{Lemma}
  \providecommand{\remarkname}{Remark}
\providecommand{\theoremname}{Theorem}
\providecommand{\definitionname}{Definition}
\providecommand{\examplename}{Example}
\providecommand{\propositionname}{Proposition}
\providecommand{\conditionname}{Condition}
  \providecommand{\corollaryname}{Corollary}
  \providecommand{\lemmaname}{Lemma}
  \providecommand{\remarkname}{Remark}
\providecommand{\theoremname}{Theorem}
\providecommand{\definitionname}{Definition}
\providecommand{\examplename}{Example}
\providecommand{\propositionname}{Proposition}
  \providecommand{\corollaryname}{Corollary}
  \providecommand{\definitionname}{Definition}
  \providecommand{\examplename}{Example}
  \providecommand{\lemmaname}{Lemma}
  \providecommand{\propositionname}{Proposition}
  \providecommand{\remarkname}{Remark}
\providecommand{\theoremname}{Theorem}
\providecommand{\conditionname}{Condition}
\providecommand{\corollaryname}{Corollary}
\providecommand{\definitionname}{Definition}
\providecommand{\examplename}{Example}
\providecommand{\lemmaname}{Lemma}
\providecommand{\propositionname}{Proposition}
\providecommand{\remarkname}{Remark}
\providecommand{\theoremname}{Theorem}
\begin{document}
\title{General Signature Kernels}
\author{Thomas Cass}\thanks{Imperial College London and the Alan Turing Institute, thomas.cass@imperial.ac.uk}
\author{Terry Lyons}\thanks{University of Oxford and the Alan Turing Institute, t.lyons@maths.ox.ac.uk}
\author{Xingcheng Xu}\thanks{Imperial College London, xingcheng.xu18@gmail.com} 

\date{%
\today
}
\thanks{The work of all three authors was supported by the EPSRC Programme Grant EP/S026347/1}
\begin{abstract}
{\normalsize{} Suppose that $\gamma$ and $\sigma$ are two continuous bounded variation paths which take values in a finite-dimensional inner product space $V$. The recent papers \cite{ko} and \cite{clsw} respectively introduced the truncated and the untruncated signature kernel of $\gamma$ and $\sigma$ and showed how these concepts can be used in classification and prediction tasks involving multivariate time series. In this paper we consider general signature kernels of the form 
\begin{equation}
K^{\gamma,\sigma}_{\phi}\left(  s,t\right)  =\left\langle S\left(  \gamma\right)
_{a,s},S\left(  \sigma\right)  _{a,t}\right\rangle_{\phi} :=\sum_{k=0}^{\infty
}\phi(k)\left\langle S\left(  \gamma\right)  _{a,s}^{k},S\left(  \sigma\right)
_{a,t}^{k}\right\rangle _{k}\label{orig sig ker}%
\end{equation}
where $\left\langle \cdot,\cdot\right\rangle _{k}$ is the Hilbert-Schmidt inner-product on $V^{\otimes k}$ and $\phi: \mathbb{N}\cup \{0\} \mapsto \mathbb{C}$. We show how $K^{\gamma,\sigma}_{\phi}$ can be interpreted in many examples as an average of PDE solutions and thus how it can estimated computationally using suitable quadrature formulae. We extend this analysis to derive closed-form formulae for expressions involving the expected (Stratonovich)
signature of Brownian motion. In doing so we articulate a novel connection between signature kernels and the hyperbolic
development, the latter of which has been a broadly useful tool in the analysis of the signature, see e.g. \cite{HL2010}, \cite{LX} and \cite{BG2019}. As an application we evaluate the use different general signature kernels as the basis for non-parametric goodness-of-fit tests to Wiener measure on path space.}{\normalsize\par}
\end{abstract}

\maketitle
\vspace{5mm}
 \hspace{8mm}\textbf{Keywords:} The signature, expected signatures, kernel methods, general signature kernels,

\hspace{8mm}Gaussian quadrature, hyperbolic development, contour integration

\section{Introduction}

Kernel methods are well-established tools in machine learning which are 
fundamental to support vector machine models for classification, nonlinear
regression and outlier detection involving small or moderate-sized data sets \cite{steinwart}, \cite{carmeli}, \cite{Sri}. Applications are manifold and include text classification \cite{lodhi}, protein classification \cite{leslie} as well as applications to biological sequences \cite{tsuda} and labelled graphs \cite{kashima}.
The essence of these methods is to achieve better separation between labelled data by embedding a low-dimensional feature space $X$ into a higher dimensional one $H$, which is commonly assumed to be a Hilbert space, by
means of a feature map $\psi:X\rightarrow H$. The associated kernel is a
function $K:X\times X\rightarrow%
%TCIMACRO{\U{211d} }%
%BeginExpansion
\mathbb{R}
%EndExpansion
$ with the property that $\left\langle \psi\left(  x\right)  ,\psi\left(
y\right)  \right\rangle _{H}=K\left(  x,y\right)  $ for all $x$ and $y$ in
$X.$ If $K$ is known in closed form then the
inner-products of all extended features are obtainable from the evaluation of
$K$ at pairs of training instances in the original feature set. A typical classification problem can be formulated as convex constrained
optimisation problem for which the Lagrangian dual involves only the
inner-products of pairs of enhanced features in the set of training instances. Crucially, one does not need the vectors of the enhanced features themselves. This observation -- the
basis of the so-called kernel trick -- then allows one to enjoy the advantages of
working in a higher dimensional feature space without some of the concomitant drawbacks.

The selection of an effective kernel is challenging and somewhat
task-dependent. When the training data consist of sequential data such as time
series, these challenges are magnified. To address these and other
difficulties much recent progress has been made by
re-purposing the (path) signature transform from rough path theory, which has
decisive advantages in capturing complex interactions between multivariate
data streams. We recall that the signature of a continuous bounded
variation path $\gamma:\left[  a,b\right]  \rightarrow V$ is the formal tensor
series of iterated integrals
\begin{equation}
S\left(  \gamma\right)  _{a,b}=1+\sum_{k=1}^{\infty}S\left(  \gamma\right)
_{a,b}^{k}\in T\left(  \left(  V\right)  \right)  =\oplus_{k=0}^{\infty
}V^{\otimes k}\text{ with }S\left(  \gamma\right)  _{a,b}^{k}:=\int%
_{a<t_{1}<t_{2}<...<t_{k}<b}d\gamma_{t_{1}}\otimes...\otimes d\gamma_{t_{k}%
}.\label{sig def 1}%
\end{equation}
The soundness of this approach is underpinned by the fact that the map
$\gamma\mapsto S\left(  \gamma\right)  _{a,b}$ is one-to-one, up to an
equivalence relation on the space of paths \cite{HL2010}. The signature is invariant under reparameterisation, and
therefore by representing the path $\gamma$ by the tensor series
$S(\gamma)_{a,b}$ one removes an otherwise complicating infinite-dimensional
symmetry$.$ On the other hand, the signature captures the order of
events along $\gamma.$ The algebraic properties of the signature have been developed since the foundational work of Chen; it is now well understood that the signature transform describes the set of polynomials on unparameterised paths, in a sense that can be made meaningful. Analytically,
the signature of $\gamma$ characterises the class of responses (i.e.
solutions) of all smooth differential systems which have $\gamma$ as the input.

An important fact is the factorial decay rate of the terms in the series in
(\ref{sig def 1}).\ That is, given appropriately defined norms on the tensor
product spaces $V^{\otimes k}:$
\[
\left\vert \left\vert \int_{a<t_{1}<t_{2}<...<t_{k}<b}d\gamma_{t_{1}}%
\otimes...\otimes d\gamma_{t_{k}}\right\vert \right\vert _{V^{\otimes k}}%
\leq\frac{L\left(  \gamma\right)  ^{k}}{k!},
\]
where $L\left(  \gamma\right)  $ denotes the length of the path over $\left[
a,b\right]  .$ This allows one to define the (untruncated) signature kernel of
two paths $\gamma$ and $\sigma$ by
\begin{equation}
K^{\gamma,\sigma}\left(  s,t\right)  =\left\langle S\left(  \gamma\right)
_{a,s},S\left(  \sigma\right)  _{a,t}\right\rangle :=1+\sum_{k=1}^{\infty
}\left\langle S\left(  \gamma\right)  _{a,s}^{k},S\left(  \sigma\right)
_{a,t}^{k}\right\rangle _{k}\label{orig sig ker}%
\end{equation}
where $\left\langle \cdot,\cdot\right\rangle _{k}$ is the canonical
(Hilbert-Schmidt) inner-product on $V^{\otimes k}$ derived from a fixed
inner-product on $V$. In the recent paper \cite{clsw} it was shown that this untruncated
signature kernel has some advantages over it truncated counterpart \cite{ko} which, in some cases, lead to greater accuracy in classification
and regression tasks on benchmark data sets for multivariate time series. The explanation
for this turns on the key observation that $K$ is the unique solution of the
hyperbolic partial differential equation%
\begin{equation}
\frac{\partial^{2}K}{\partial s\partial t}\left(  s,t\right)  =K\left(
s,t\right)  \left\langle \gamma_{s}^{\prime},\sigma_{t}^{\prime}\right\rangle
\text{ with }K\left(  a,\cdot\right)  =K\left(  \cdot,a\right)  \equiv
1.\label{hpde}%
\end{equation}
The solution to which can be approximated using PDE solvers, thus allowing for the
efficient computation of the inner product in (\ref{orig sig ker}) and obviating the need to compute iterated integrals.

While the kernel (\ref{orig sig ker}) is useful, it is also in some respects
confining. One restriction it imposes is on the relative contributions made to
the sum (\ref{orig sig ker}) by the different inner-products $\left\langle
\cdot,\cdot\right\rangle _{k}$. It is easy to see for example by scaling
$\gamma$ by $\lambda=e^{\alpha}\in%
%TCIMACRO{\U{211d} }%
%BeginExpansion
\mathbb{R}
%EndExpansion
$ to give $(\lambda\gamma)_{\cdot}=\lambda\gamma_{\cdot}$ we have
\[
K^{\lambda \gamma,\sigma}\left(  s,t\right)  =1+\sum_{k=1}^{\infty}e^{\alpha
k}\left\langle S\left(  \gamma\right)  _{a,s}^{k},S\left(  \sigma\right)
_{a,t}^{k}\right\rangle _{k},
\]
so that the signature kernel for the family of inner-products $\left\langle
\cdot,\cdot\right\rangle _{\alpha}=\sum_{k\geq0}e^{\alpha k}\left\langle
\cdot,\cdot\right\rangle _{k}$ can be obtained as above by solving the
appropriately rescaled version of the PDE (\ref{hpde}). The starting point for
this paper is to introduce methods that allow for the efficient computation of
general signature kernels with a different weighting. These will be derived
from bilinear forms on $T\left(  V\right)  $ of the type
\[
\left\langle \cdot,\cdot\right\rangle _{\phi}=\sum_{k=0}^{\infty}\phi\left(
k\right)  \left\langle \cdot,\cdot\right\rangle _{k},
\]
where $\phi:\mathbb{N\cup}\left\{  0\right\}  \rightarrow%
%TCIMACRO{\U{211d} }%
%BeginExpansion
\mathbb{R}
%EndExpansion
$ (or, sometimes, $%
%TCIMACRO{\U{2102} }%
%BeginExpansion
\mathbb{C}
%EndExpansion
$), so that $\left\langle \cdot,\cdot\right\rangle _{\phi}$ need not even define
an inner-product. One fundamental observation we take advantage of is
illustrated by the following argument: assume $\phi\left(  0\right)  =1, $ and
suppose that we can solve the Hamburger moment problem for the sequence
$\left\{  \phi\left(  k\right)  :k\in\mathbb{N\cup}\left\{  0\right\}
\right\}  $, i.e. we can find a probability measure $\mu$ on $%
%TCIMACRO{\U{211d} }%
%BeginExpansion
\mathbb{R}
%EndExpansion
$ such that
\begin{equation}
\phi\left(  k\right)  =\int\lambda^{k}d\mu\left(  \lambda\right)  \text{ for
all }k\in\mathbb{N\cup}\left\{  0\right\}  .\label{hmp}%
\end{equation}
Then, under some conditions on $\mu,$ we will be able to justify the following
identity
\begin{equation}
\left\langle S\left(  \gamma\right)  _{a,s},S\left(  \sigma\right)
_{a,t}\right\rangle _{\phi}=\sum_{k=0}^{\infty}\int\lambda^{k}\left\langle
S\left(  \gamma\right)  _{a,s}^{k},S\left(  \sigma\right)  _{a,t}%
^{k}\right\rangle _{k}d\mu\left(  \lambda\right)  =\int K^{\lambda\gamma,\sigma}\left(  s,t\right)  d\mu\left(  \lambda\right)  .\label{key identity}%
\end{equation}
In this case, the computation of the $\phi-$signature kernel, i.e. the one
arising from $\left\langle \cdot,\cdot\right\rangle _{\phi},$ will amount to
integrating scaled solutions to the PDE (\ref{hpde}) in $\lambda$ with respect
to the measure $\mu.$ The practicability of this approach depends on two
aspects. Firstly, one needs to be able to solve the moment problem
(\ref{hmp}); there are well-known necessary and sufficient conditions but,
ideally, $\mu$ should be determined explicitly. Secondly, one needs to be able
to approximate accurately the integral on the right hand side of
(\ref{key identity}). In this respect one is helped by the form of the
function $\lambda\mapsto K^{\lambda\gamma,\sigma}\left(  s,t\right)  $
which is real analytic with a power series whose coefficients decay at rate
$\left(  n!\right)  ^{-2}$. Hence, in cases where $\mu$ has a density $w$
given in closed form, Gaussian quadrature provides an approximation of the
form
\[
\int K^{\lambda\gamma,\sigma}\left(  s,t\right)  d\mu\left(  \lambda
\right)  \approx\sum_{i=1}^{m}w_{i}K^{\lambda_{i}\gamma,\sigma}\left(
s,t\right)
\]
and equip us with well-described error bounds, see e.g. \cite{suli}. For these
examples, the $\phi-$signature kernel can be approximated at the expense of
$m$ implementations of a PDE solver.

The same principle outlined in the previous paragraph can appear in different
guises. For example, by solving the trigonometric moment problem
\[
\phi\left(  k\right)  =\int_{0}^{2\pi}e^{ik\theta}d\mu\left(  \theta\right)
\text{ for }k\in%
%TCIMACRO{\U{2124}}%
%BeginExpansion
\mathbb{Z}%
%EndExpansion
\]
to find a measure $\mu$ on $\left[  0,2\pi\right]$, then an analogue of
(\ref{key identity}) can be obtained by integrating the complex-valued
function $\theta\mapsto$ $K^{\exp\left(  i\theta\right)\gamma,\sigma
}\left(  s,t\right)  $ with respect to $\mu.$ A similar observation applies to
a class of integral transforms having the form
\begin{equation}
\phi\left(  u\right)  =\int_{C}r\left(  u,z\right)  d\mu\left(  z\right)
;\text{ where }r\left(  u,z\right)  =g\left(  z\right)  ^{\alpha u}\text{ }\in%
%TCIMACRO{\U{2102} }%
%BeginExpansion
\mathbb{C}
%EndExpansion
\text{ for some }\alpha\in%
%TCIMACRO{\U{211d} }%
%BeginExpansion
\mathbb{R}
%EndExpansion
.\label{pairs}%
\end{equation}
This class includes the Fourier-, Laplace- and Mellin- Stieltjes transforms,
for which specific pairs $\left(  \phi,\mu\right)  $ are of course extensively
documented. We illustrate a range of examples that can be generated using this
idea in the main text.

Extensions of the same idea apply to expected signatures. It is by now well
known that, under some conditions, the expected signature of a stochastic
process characterises the law of that process \cite{CL-2016}. This motivates
the use of expected signatures as a measure of similarity of two laws on path
space, for example through the quantity%
\[
d_{\phi}\left(  \mu,\nu\right)  :=\left\vert \left\vert \mathbb{E}^{\mu
}\left[  S\left(  X\right)  \right]  -\mathbb{E}^{\nu}\left[  S\left(
X\right)  \right]  \right\vert \right\vert _{\phi},
\]
which is seen to be a maximum mean discrepancy (MMD) distance between $\mu$
and $\nu$; see \cite{Gretton-2012} and \cite{CO-2018}. We also have a measure
of alignment of the two expected signatures of $\mu$ and $\nu$ given by
\[
\cos\angle_{\phi}\left(  \mu,\nu\right)  :=\frac{\left\langle \mathbb{E}^{\mu
}\left[  S\left(  X\right)  \right]  ,\mathbb{E}^{\nu}\left[  S\left(
X\right)  \right]  \right\rangle _{\phi}}{\left\vert \left\vert \mathbb{E}%
^{\mu}\left[  S\left(  X\right)  \right]  \right\vert \right\vert _{\phi
}\left\vert \left\vert \mathbb{E}^{\nu}\left[  S\left(  X\right)  \right]
\right\vert \right\vert _{\phi}},
\]
which can be interpreted as an analogue of the Pearson correlation coefficient for measures on path space.
As an application we consider designing goodness-of -fit tests in which one wants to understand when an
observed empirical sample is drawn from a well-described baseline
distribution. A motivating example for this paper was that of the
detection of radio frequency interference (RFI) contamination in
radioastronomy. In this situation, electrical signals are collected from an
array of antennas \cite{Wilensky2019}. Under the null hypothesis of no RFI
contamination, the signals will reflect only the so-called thermal noise of
the receiving equipment. From this perspective, the most important reference distribution will that of white
noise or, in its integrated form, Brownian motion. Kernels have been used
for similar problems previously, albeit for the case of vector-valued data,
see e.g. \cite{CSG}. Proposals have been made to put similar ideas in to practice in the context of two-sided statistical tests determine whether two observed empirical measures on paths are drawn from the
same underlying distribution. For example \cite{CO-2018} work using the truncated signature kernel, while \cite{SLL} present an application based on the original signature kernel $\phi\equiv1$.

A formula for the expected Stratonovich signature of multivariate Brownian motion has been
known since the work of Fawcett \cite{fawcett} and Victoir \cite{LV-2004}. In the context of the problems
described above, we can take advantage of Fawcett's formula to prove what we
believe to be a novel identity, namely that for any continuous path $\gamma$
of bounded variation we have
\begin{equation}
\left\langle \mathbb{E}\left[  S\left(  \circ B\right)  _{0,s}\right]
,S\left(  \gamma\right)  _{0,t}\right\rangle _{\phi}=\cosh\left(  \rho
_{\sqrt{s/2}\gamma}\left(  t\right)  \right)  .\label{novel identity}%
\end{equation}
In this formula, $\rho_{\gamma}\left(  t\right)  $ is the hyperbolic distance
between the starting point and the end point of the hyperbolic development of
the path segment $\left.  \gamma\right\vert _{\left[  0,t\right]  }$, and
\[
\phi\left(  k\right)  :=\Gamma\left(  k/2+1\right)  :=\int_{0}^{\infty}%
x^{k/2}e^{-x}dx.
\]
When we realise hyperbolic space as a hyperboloid, the right hand side of
formula (\ref{novel identity}) can be obtained by solving a linear ordinary
differential equation. In the special case where $\gamma$ is piecewise linear,
this solution of the equation is a known product of matrices. These remarks
allow one to compute quantities like $d_{\phi}\left(  \mathcal{W},\nu\right)
$, where $\mathcal{W}$ denotes Wiener measure and $\nu$ is an empirical
measure on bounded variation paths. We note that the primary use of the
hyperbolic development in the study of signatures to date has been in
obtaining lower bounds for the study of signature asymptotics, see
\cite{HL2010} and \cite{BG2019}. In this context, the identity
(\ref{novel identity}) appears new, and it establishes a 
connection between the signature kernel and these broader topics. It seems plausible
that an additional benefit of (\ref{novel identity}) will be that it allows a
more analytic treatment of these other problems in a way that relies less on the
geometrical intricacies of hyperbolic space. 

If $\phi\equiv1$, we can use
Hankel's well-known representation for the reciprocal Gamma function as the
contour integral
\[
\frac{1}{\Gamma\left(  z\right)  }=\frac{1}{2\pi i}\oint_{H}w^{-z}e^{w}dw,
\]
where $H$ is Hankel's contour. Noting the similarity with (\ref{pairs}) we can
obtain the identity
\begin{equation}
\left\langle \mathbb{E}\left[  S\left(  \circ B\right)  _{0,s}\right]
,S\left(  \gamma\right)  _{0,t}\right\rangle _{\phi}=\frac{1}{2\pi i}\oint%
_{C}w^{-1}e^{w}\cosh\left(  \rho_{\sqrt{s/2w}\gamma}\left(  t\right)  \right)
dw,\label{hankel rep}%
\end{equation}
for an appropriate contour $C$. To make sense of this formula, we first need to make sense of the complex rescaling in the defining ODE for hyperbolic development. The numerical evaluation of contour integrals of the
form $\oint_{C}f\left(  w\right)  e^{w}dw$ is an active topic in numerical
integration, see \cite{ST-2007}, and we use these ideas to evaluate (\ref{hankel rep}). The same idea can be extended to cover
general $\phi.$

In the final two sections we consider examples which lend themselves to being
treated by the methods outlined above. A natural question is how to select an
appropriate $\phi$ for a given task and, the related question of how to
evaluate the performance of a given kernel against an alternative. To develop
this, we reverse the perspective taken above and use $d_{\phi}$ to define a
loss function%
\[
L_{\phi}\left(  \mathcal{W},\mu\right)  :=d_{\phi}\left(  \mathcal{W}%
,\mu\right)  _{\phi}^{2}%
\]
and, given a finite collection of paths $\left\{  \gamma_{1},...,\gamma
_{n}\right\},$ we consider the problem of minimising $L$ over the
set
\[
C_{n}=\left\{  \mu=%
%TCIMACRO{\tsum \nolimits_{i=1}^{n}}%
%BeginExpansion
{\textstyle\sum\nolimits_{i=1}^{n}}
%EndExpansion
\lambda_{i}\delta_{\gamma_{i}}:%
%TCIMACRO{\tsum \nolimits_{i=1}^{n}}%
%BeginExpansion
{\textstyle\sum\nolimits_{i=1}^{n}}
%EndExpansion
\lambda_{i}=1,\lambda_{i}\geq0\right\}  .
\]
Under some conditions on the support this optimisation problem will have a
unique solution $\mu^{\ast}$ which we can find. This gives us a way of
evaluating the similarity of a given finitely supported (possibly empirical)
measure $\mu$ to Wiener measure under the loss function induced by
$\left\langle \cdot,\cdot\right\rangle _{\phi}$ by comparing $L_{\phi}\left(
\mathcal{W},\mu\right)  $ and $L_{\phi}\left(  \mathcal{W},\mu^{\ast}\right)
$. For example, if the ratio $\frac{L_{\phi}\left(  \mathcal{W},\mu^{\ast
}\right)  }{L_{\phi}\left(  \mathcal{W},\mu\right)  }<\alpha<<1 $ then by an
appropriate selection of the threshold $\alpha$ one might decide that $\mu$
does not resemble Wiener measure. We do not give an extensive treatment of
examples, but to illustrate how these methods introduced above might be used
we consider two cases in detail:

\begin{enumerate}
\item Cubature measures of degree $N$ on Weiner space are finitely supported
measures which matched the expected iterated integrals of Brownian motion up
to and including degree $N.$ Explicit constructions are known in some cases,
see \cite{LV-2004}. By definition these measures will be optimal in the above
sense for any kernel induced by any $\phi$ with $\phi\left(  k\right)  =0$ for
$k\geq N.$ One might expect that they are close to optimal for smoother $\phi$
which still decay sufficiently fast.

\item We model radio frequency interference in sky-subtracted visibilities
radioastronomy as advocated by \cite{Wilensky2019} and consider two idealised
types of signal contamination:

\begin{itemize}
\item Narrow-band RFI measure across $n$ antennas. In this case the received
signals are $n$ linear superpositions of independent Brownian motions with a
single-frequency sinusoidal wave of a fixed amplitude.

\item Short duration high energy bursts. As a model for this we consider the
gerneralisation to the multivariate case of the example, originally considered
in the univariate setting in which the signal is given by $X_{t}%
=W_{t}+\epsilon\sqrt{\left(  t-U\right)  _{+}}$ for $t\in\left[  0,1\right]
,$where $\left(  W_{t}\right)  _{t\in\left[  0,1\right]  }$ is a Brownian
motion, $U$ is independent an uniformly distributed on $\left[  0,1\right]  $
and $\epsilon>0.$ The theoretical interest in this comes from the existence of
a critical parameter $\epsilon_{0}>0$ for which the law of $X$ is equivalent
to $\mathcal{W}$ if and only if $\epsilon<\epsilon_{0},$ see \cite{Davis-1984}, and which therefore gives an example that falls outside the scope of traditional maximum-likelihood-based approaches to the problem. 
\end{itemize}
\end{enumerate}

\section{Background on General Signature Kernels\label{sec-general-Signature-Kernels}}

Let $T\left(V\right)$ denote the algebra of tensor polynomials over
a finite dimensional vector space $V$ which consists of elements
of the form 
\[
a=\sum_{k=0}^{\infty}a_{k},\text{ }a_{k}\in V^{\otimes k}\text{ such that }a_{k}=0\text{ for all but finitely many }k,
\]
with the tensor product defined by
\[
ab=\sum_{k=0}^{\infty}\sum_{l=0}^{k}a_{l}b_{k-l}\text{ }
\]
where the product $V^{\otimes l}\times V^{\otimes\left(k-l\right)}\ni\left(c,d\right)\mapsto cd\in V^{\otimes k}$
is determined by $\left((v_{1}...v_{l}),(v_{l+1}...v_{k})\right)\mapsto v_{1}...v_{k}$
for $v_{1}...v_{k}\in V.$ We let $T\left(\left(V\right)\right)$
denote the space of formal tensor series, and $V^{\ast}$ denote the
(algebraic) dual space of $V.$ Then $T\left(V^{\ast}\right)$ is
the dual space of \ $T\left(\left(V\right)\right),$ and the signature
of a continuous bounded variation path $\gamma:[a,b]\rightarrow$
$V$ is the family of elements $\{S\left(\gamma\right)_{s,t}:s\leq t \in [a,b]\}$
in $T\left(\left(V\right)\right)$ determined inductively by
\begin{equation}
S\left(\gamma\right)_{s,t}\left(1\right)=1\text{ and }S\left(\gamma\right)_{s,t}\left(v_{1}...v_{k}\right)=\int_{s}^{t}S\left(\gamma\right)_{s,u}\left(v_{1}...v_{k-1}\right)d\gamma_{u}\left(v_{k}\right),\text{ with }v_{1},...,v_{k}\in V^{\ast}.\label{sig def}
\end{equation}
We will write 
\begin{equation}
S\left(\gamma\right)_{s,t}=1+\sum_{k=1}^{\infty}\int_{s<t_{1}<...<t_{k}<t}d\gamma_{t_{1}}...d\gamma_{t_{k}}\in T\left(\left(V\right)\right),\label{sig}
\end{equation}
and let 
\begin{equation}
\mathcal{S}=\{S\left(\gamma\right)_{s,t}:\gamma,s<t\} \subset T\left(\left(V\right)\right).\label{sig set}
\end{equation}
We consider dual pairs $\left(E,F\right),\,$where $E\ \ $and $F$
are two linear subspaces of $T\left(\left(V\right)\right).$ Recall
that this means that $\left(\cdot,\cdot\right):E\times F\rightarrow\mathbb{R}$
is a bilinear map such that the linear functionals $\left\{ \left(e,\cdot\right):e\in E\right\} \subset F^{\ast}$
and $\left\{ \left(\cdot,f\right):f\in F\right\} \subset E^{\ast}$
separate points in $F$ and $E$ respectively. We can identify $E$
and $F$ linear subspaces of the algebraic dual spaces $F^{\ast}$
and $E^{\ast}$ respectively.
\begin{defn}
\label{sig ker}Let $\left(E,F\right)$ be a dual pair as above. Suppose
that $\mathcal{S}\subset E\cap F$ where $S$ denotes the set of signatures
(\ref{sig set}). Then given two continuous paths $\gamma,\sigma:[a,b]\rightarrow V$
of bounded variation, we define the $\left(\cdot,\cdot\right)$-signature
kernel of $\gamma$ and $\sigma$ to be the function
\[
\left[a,b\right]\times\left[a,b\right]\ni\left(s,t\right)\mapsto\left(S\left(\gamma\right)_{a,s},S\left(\sigma\right)_{a,t}\right)=K_{\left(\cdot,\cdot\right)}^{\gamma,\sigma}\left(s,t\right).
\]
\end{defn}

\begin{rem}
This definition is not symmetric in general, i.e. it may hold that
$K_{\left(\cdot,\cdot\right)}^{\gamma,\sigma}\neq$ $K_{\left(\cdot,\cdot\right)}^{\sigma,\gamma}.$ 
\end{rem}

For this definition to be useful we need to demand more of the pairing
$\left(E,F\right).$ More exactly we need at least that their continuous duals
satisfy $F \subseteq E^{\prime} $ and $E \subseteq F^{\prime}.$ To go further still we
will need that they respect some of the algebraic structure on $T\left(\left(V\right)\right).$
The examples we will work are derived from a fixed but arbitrary inner
product $\left\langle \cdot,\cdot\right\rangle $ on $V.$ This gives
rise to the Hilbert-Schmidt inner product $\left\langle \cdot,\cdot\right\rangle _{k}$
on the $k$-fold tensor product spaces $V^{\otimes k}$ in a canonical
way. Then, by taking 
\[
\left\langle a,b\right\rangle _{\phi}:=\sum_{k=0}^{\infty}\phi\left(k\right)\left\langle a_{k},b_{k}\right\rangle _{k}
\]
for some weight function $\phi:%TCIMACRO{\U{2115}}%
%BeginExpansion
\mathbb{N}%EndExpansion
\cup\left\{ 0\right\} \rightarrow%TCIMACRO{\U{211d}}%
%BeginExpansion
\mathbb{R}%EndExpansion
_{+}$ we may define
$T_{\phi}\left(V\right)$ to be the Hilbert space obtained by completing $T\left(V\right)$
with respect to $\left\langle \cdot,\cdot\right\rangle _{\phi}.$ We equip $T_{\phi}\left(V\right)$ with the norm topology unless stated otherwise.
It is necessary to have a condition on $\phi$ which ensures that $\mathcal{S\subset}T_{\phi}\left(V\right).$
\begin{lem}
\label{lem-S-in-Tphi}Let $\phi:%TCIMACRO{\U{2115}}%
%BeginExpansion
\mathbb{N}%EndExpansion
\cup\left\{ 0\right\} \rightarrow%TCIMACRO{\U{211d}}%
%BeginExpansion
\mathbb{R}%EndExpansion
_{+}$ be such that for every $C>0$ the series $\sum_{k\in%TCIMACRO{\U{2115}}%
%BeginExpansion
\mathbb{N}%EndExpansion
}C^{k}\phi\left(k\right)\left(k!\right)^{-2}$ is summable. Then $\mathcal{S\subset}T_{\phi}\left(V\right).$ 
\end{lem}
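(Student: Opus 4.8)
The plan is to prove membership in two stages: first to show that each signature $S(\gamma)_{s,t}$ has finite $\langle\cdot,\cdot\rangle_\phi$-norm, using the factorial decay of its graded components, and then to argue that finiteness of this norm is exactly what places the element in the completion $T_\phi(V)$. The hypothesis on $\phi$ is, as we will see, tailored precisely to the squared version of the factorial estimate.

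First I would record the multiplicativity of the Hilbert--Schmidt norm: since $\langle u\otimes v,\,u'\otimes v'\rangle_{k+l}=\langle u,u'\rangle_k\,\langle v,v'\rangle_l$ for $u,u'\in V^{\otimes k}$ and $v,v'\in V^{\otimes l}$, one has $\|u\otimes v\|_{k+l}=\|u\|_k\,\|v\|_l$. Inserting this inside the iterated-integral representation of $S(\gamma)^k_{s,t}$ and estimating the resulting integral over the $k$-simplex against the total-variation measure of $\gamma$ yields the factorial decay already quoted in the introduction,
\[
\|S(\gamma)^k_{s,t}\|_k\;\leq\;\int_{s<t_1<\cdots<t_k<t}\prod_{i=1}^k\|d\gamma_{t_i}\|\;=\;\frac{L^k}{k!},\qquad L:=\int_s^t\|d\gamma_u\|,
\]
where $L$ is the length ($1$-variation) of $\gamma$ on $[s,t]$ in the norm induced by the inner product on $V$, finite because $\gamma$ has bounded variation.

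Next I would bound the $\phi$-norm by squaring and summing:
\[
\|S(\gamma)_{s,t}\|_\phi^2=\sum_{k=0}^\infty\phi(k)\,\|S(\gamma)^k_{s,t}\|_k^2\;\leq\;\phi(0)+\sum_{k=1}^\infty\phi(k)\,\frac{L^{2k}}{(k!)^2}.
\]
Setting $C:=L^2>0$ (the case of constant $\gamma$ being trivial, since then $S(\gamma)_{s,t}=1$), the tail sum is precisely $\sum_{k\in\mathbb{N}}C^k\phi(k)(k!)^{-2}$, which the hypothesis assumes summable; hence $\|S(\gamma)_{s,t}\|_\phi^2<\infty$. Note that the exponent $2$ on $k!$ in the hypothesis is exactly what the squaring of the factorial bound demands, and $C=L^2$ absorbs the squared length.

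It remains to convert finiteness of the norm into membership in the completion. Writing $P_N:=\sum_{k=0}^N S(\gamma)^k_{s,t}\in T(V)$ for the degree-$N$ truncation, which is a genuine tensor polynomial, the same tail estimate gives $\|S(\gamma)_{s,t}-P_N\|_\phi^2=\sum_{k>N}\phi(k)\|S(\gamma)^k_{s,t}\|_k^2\to 0$ as $N\to\infty$. Thus $(P_N)_N$ is Cauchy in $T(V)$ and its limit is $S(\gamma)_{s,t}$, exhibiting the latter as an element of $T_\phi(V)$; since $\gamma,s,t$ are arbitrary, $\mathcal{S}\subset T_\phi(V)$. The one point deserving care is this last identification of the abstract completion with the concrete space of formal series of finite $\phi$-norm; this is where the orthogonality of the graded decomposition (valid because $\phi$ takes values in $\mathbb{R}_+$) together with the truncation argument does the work. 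Beyond this the argument is short, the substance lying entirely in matching the squared factorial bound to the precise form of the summability hypothesis.
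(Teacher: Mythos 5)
Your proof is correct and follows essentially the same route as the paper: both reduce the claim to the factorial decay estimate $\|S(\gamma)^k_{s,t}\|_k\le L^k/k!$, square it, and invoke the summability hypothesis with $C=L^2$ (the paper phrases the estimate coordinatewise via $\sum_{|I|=k}[S(\gamma)_{s,t}(e_I^{*})]^2\le L^{2k}/(k!)^2$, which is the same bound). Your final paragraph identifying the formal series of finite $\phi$-norm with an element of the abstract completion via Cauchy truncations is a point the paper leaves implicit, and it is a welcome addition rather than a deviation.
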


\begin{proof}
Let $\left\{ e_{i}:i=1,..,d\right\} $ be any orthonormal basis of
$V$ w.r.t. $\left\langle \cdot,\cdot\right\rangle ,$ and $\left\{ e_{I}^{\ast}:I=(i_{1},...,i_{k})\right\} $
the associated dual basis on $\left(V^{\ast}\right)^{\otimes k}.$
Then 
\begin{equation}
\left\vert \left\vert S\left(\gamma\right)_{s,t}\right\vert \right\vert _{\phi}^{2}=\sum_{k=0}^{\infty}\phi\left(k\right)\sum_{\left\vert I\right\vert =k}\left[S\left(\gamma\right)_{s,t}\left(e_{I}^{\ast}\right)\right]^{2},\label{series}
\end{equation}
and since 
\[
S\left(\gamma\right)_{s,t}\left(e_{I}^{\ast}\right)=\int_{s<u_{1}<u_{2}<...<u_{k}<t}d\left\langle e_{i_{1}},\gamma_{u_{1}}\right\rangle d\left\langle e_{i_{2}},\gamma_{u_{2}}\right\rangle ...d\left\langle e_{i_{k}},\gamma_{u}\right\rangle 
\]
we can estimate the summands in (\ref{series}) by 
\[
\sum_{\left\vert I\right\vert =k}\left[S\left(\gamma\right)_{s,t}\left(e_{I}^{*}\right)\right]^{2}\leq\frac{L_{s,t}\left(\gamma\right)^{2k}}{\left(k!\right)^{2}},\text{ where }L_{s,t}\left(\gamma\right):=\int_{s}^{t}\left\vert d\gamma_{u}\right\vert \text{ is the length of }\gamma.
\]
The summability condition then ensures that (\ref{series}) is finite. 
\end{proof}
This prompts the following condition.
\begin{condition}
\label{sum}The function $\phi:%TCIMACRO{\U{2115}}%
%BeginExpansion
\mathbb{N}%EndExpansion
\cup\left\{ 0\right\} \rightarrow%TCIMACRO{\U{211d}}%
%BeginExpansion
\mathbb{R}%EndExpansion
_{+}$ is such that the series $\sum_{k\in%TCIMACRO{\U{2115}}%
%BeginExpansion
\mathbb{N}%EndExpansion
}C^{k}\phi\left(k\right)\left(k!\right)^{-2}$ is summable for every $C>0.$ 
\end{condition}

The next lemma describes examples of dual pairs $\left(E,F\right)$
of Hilbert spaces which fulfill the conditions in Definition \ref{sig ker}.
\begin{lem}
Let $\phi:%TCIMACRO{\U{2115}}%
%BeginExpansion
\mathbb{N}%EndExpansion
\cup\left\{ 0\right\} \rightarrow%TCIMACRO{\U{211d}}%
%BeginExpansion
\mathbb{R}%EndExpansion
_{+}$ and $\psi:%TCIMACRO{\U{2115}}%
%BeginExpansion
\mathbb{N}%EndExpansion
\cup\left\{ 0\right\} \rightarrow%TCIMACRO{\U{211d}}%
%BeginExpansion
\mathbb{R}%EndExpansion
_{+}$ be functions such that $\phi$ and $\psi^{-1}($i.e. $n\mapsto\psi\left(n\right)^{-1})$
satisfy the summability criterion of Condition \ref{sum}. In each
of the following cases $\left(E,F\right)$ is a dual pair which satisfies $F \subseteq E^{\prime} $ and $E \subseteq F^{\prime}.$
\begin{enumerate}
\item $E=T_{\phi}\left(V\right),F=T_{\phi}\left(V\right),\left(\cdot,\cdot\right)=\left\langle \cdot,\cdot\right\rangle _{\phi},$
\item $E=T_{\phi}\left(V\right),F=T_{\psi^{-1}}\left(V\right),\left(\cdot,\cdot\right)=\left\langle \cdot,\cdot\right\rangle _{\sqrt{\phi/\psi}}$. 
\end{enumerate}
\end{lem}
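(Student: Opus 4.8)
The plan is to check, for both cases, the three ingredients packaged into the statement: that $(\cdot,\cdot)$ is a well-defined $\mathbb{R}$-valued bilinear form on $E\times F$, that the families $\{(e,\cdot):e\in E\}$ and $\{(\cdot,f):f\in F\}$ separate points (so $(E,F)$ is genuinely a dual pair), and that $(\cdot,\cdot)$ is separately continuous (so $F\subseteq E'$ and $E\subseteq F'$). The containment $\mathcal{S}\subseteq E\cap F$ demanded by Definition \ref{sig ker} is then immediate from Lemma \ref{lem-S-in-Tphi}, since the hypotheses say precisely that $\phi$ and $\psi^{-1}$ satisfy Condition \ref{sum}.

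Case (1) is little more than the Riesz representation theorem. Here $\langle\cdot,\cdot\rangle_{\phi}$ is by definition the inner product making $T_{\phi}(V)$ a Hilbert space, so it is bilinear, and since $\phi$ is $\mathbb{R}_{+}$-valued it is positive-definite; positive-definiteness yields separation of points on both sides simultaneously. The Cauchy--Schwarz inequality $|\langle a,b\rangle_{\phi}|\le\|a\|_{\phi}\,\|b\|_{\phi}$ shows each $\langle a,\cdot\rangle_{\phi}$ lies in the continuous dual, giving $E=F\subseteq E'=F'$ (with equality, by Riesz), and Lemma \ref{lem-S-in-Tphi} applied to $\phi$ gives $\mathcal{S}\subseteq T_{\phi}(V)=E\cap F$.

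Case (2) carries the real content, and I would hang it on a single estimate. Writing $a=\sum_{k}a_{k}$, $b=\sum_{k}b_{k}$ with $a_{k},b_{k}\in V^{\otimes k}$, I would first bound each homogeneous term by the Hilbert--Schmidt Cauchy--Schwarz inequality, $|\langle a_{k},b_{k}\rangle_{k}|\le\|a_{k}\|_{k}\|b_{k}\|_{k}$, and then apply Cauchy--Schwarz for sequences to the factorisation
\[
\sqrt{\phi(k)/\psi(k)}\,\|a_{k}\|_{k}\,\|b_{k}\|_{k}=\bigl(\sqrt{\phi(k)}\,\|a_{k}\|_{k}\bigr)\bigl(\psi(k)^{-1/2}\,\|b_{k}\|_{k}\bigr).
\]
This gives absolute convergence of the defining series together with the bound
\[
\bigl|\langle a,b\rangle_{\sqrt{\phi/\psi}}\bigr|\le\|a\|_{\phi}\,\|b\|_{\psi^{-1}},
\]
the point being that the weight $\sqrt{\phi/\psi}$ is chosen exactly so that the two factors regroup into the $\phi$-norm of $a$ and the $\psi^{-1}$-norm of $b$. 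Bilinearity is then clear, and the displayed bound gives separate continuity, hence $F\subseteq E'$ and $E\subseteq F'$.

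It remains to handle separation of points and the containment of $\mathcal{S}$. For separation I would test against homogeneous tensors: every finite tensor lies in $T(V)$ and so has finite weighted norm for any positive weight, whence $T(V)\subseteq E\cap F$. Given $a\neq0$ in $E$ there is a $k$ with $a_{k}\neq0$; taking $f=a_{k}\in T(V)\subseteq F$ gives $(a,f)=\sqrt{\phi(k)/\psi(k)}\,\|a_{k}\|_{k}^{2}>0$, and the symmetric choice settles separation on the $F$ side. Finally, applying Lemma \ref{lem-S-in-Tphi} to $\phi$ and to $\psi^{-1}$ separately gives $\mathcal{S}\subseteq T_{\phi}(V)$ and $\mathcal{S}\subseteq T_{\psi^{-1}}(V)$, so $\mathcal{S}\subseteq E\cap F$. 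I expect no deep obstacle: the only step demanding care is justifying the rearrangement in the boundedness estimate, which is exactly what absolute convergence through the two Cauchy--Schwarz steps secures; once the weight is recognised as engineered to split the sum, the rest is routine.
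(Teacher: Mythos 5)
Your proposal is correct and follows essentially the same route as the paper: the heart of both arguments is the estimate $\left\vert \left\langle h,g\right\rangle _{\sqrt{\phi/\psi}}\right\vert \leq\left\vert \left\vert h\right\vert \right\vert _{\phi}\left\vert \left\vert g\right\vert \right\vert _{\psi^{-1}}$, which you justify in slightly more detail via the two-step Cauchy--Schwarz argument, and you additionally spell out the separation-of-points check that the paper leaves implicit. The only content in the paper's proof that you omit is the stronger observation that, via the correspondence $h\mapsto\sqrt{\phi\psi}\,h$, the functionals $\left\langle h,\cdot\right\rangle _{\sqrt{\phi/\psi}}$ actually exhaust all of $H_{\psi^{-1}}^{\prime}$; this is not needed for the containments the lemma asserts.
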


\begin{proof}
For notational ease we write $H_{\phi}$ for $T_{\phi}\left(V\right).$
In both cases Condition \ref{sum} ensures that $\mathcal{S}\subset E\cap F.$
In case 1, it is classical that $H_{\phi}^{\prime}=\{\left\langle h,\cdot\right\rangle _{\phi}:h\in H_{\phi}\},\,\ $while
for case 2 we have for \,$h\in H_{\phi}$ and $g\in H_{\psi^{-1}}$
we have that 
\[
\left\vert \left\langle h,g\right\rangle _{\sqrt{\phi/\psi}}\right\vert =\left\vert \sum_{k=0}^{\infty}\sqrt{\frac{\phi\left(k\right)}{\psi\left(k\right)}}\left\langle h_{k},g_{k}\right\rangle _{k}\right\vert \leq\left\vert \left\vert h\right\vert \right\vert _{\phi}\left\vert \left\vert g\right\vert \right\vert _{\psi^{-1}}
\]
hence $\{\left\langle h,\cdot\right\rangle _{\sqrt{\phi/\psi}}:h\in H_{\phi}\}\subseteq H_{\psi^{-1}}^{\prime}$.
By using the fact that $h$ is in $H_{\phi}$ if and only if $\tilde{h}:=\sqrt{\phi\psi}h:=\sum_{k}\sqrt{\phi\left(k\right)\psi\left(k\right)}h_{k}$
is in $H_{\psi^{-1}}$ we see that 
\[
\left\langle h,\cdot\right\rangle _{\sqrt{\phi/\psi}}=\left\langle \tilde{h},\cdot\right\rangle _{\psi^{-1}}
\]
so that $\{\left\langle h,\cdot\right\rangle _{\sqrt{\phi/\psi}}:h\in H_{\phi}\}=\{\left\langle h,\cdot\right\rangle _{\psi^{-1}}:h\in H_{\psi^{-1}}\}=H_{\psi^{-1}}^{\prime}.$ 
\end{proof}
Hereafter we will work almost entirely in the case $\left\langle T_{\phi}\left(V\right),T_{\phi}\left(V\right)\right\rangle _{\phi}$
in which the dual pair is the Hilbert space $T_{\phi}\left(V\right)$
with itself with pairing given by the inner product $\left\langle \cdot,\cdot\right\rangle _{\phi}.$
This leads to the following definition.
\begin{defn}
Let $\phi:%TCIMACRO{\U{2115}}%
%BeginExpansion
\mathbb{N}%EndExpansion
\cup\left\{ 0\right\} \rightarrow%TCIMACRO{\U{211d}}%
%BeginExpansion
\mathbb{R}%EndExpansion
_{+}$ satisfy Condition \ref{sum}. Given two continuous paths $\gamma,\sigma:[a,b]\rightarrow V$
of bounded variation, we define the $\phi$\textbf{-signature kernel}
of $\gamma$ and $\sigma$ to be the two-parameter function $K_{\phi}^{\gamma,\sigma}$
defined by 
\[
\left[a,b\right]\times\left[a,b\right]\ni\left(s,t\right)\mapsto\left\langle S\left(\gamma\right)_{a,s},S\left(\sigma\right)_{a,t}\right\rangle _{\phi}=:K_{\phi}^{\gamma,\sigma}\left(s,t\right).
\]
\end{defn}

\begin{rem}
It is straight forward to extend the discussion above to consider
general bilinear forms of signatures. If \ $\phi:%TCIMACRO{\U{2115}}%
%BeginExpansion
\mathbb{N}%EndExpansion
\cup\left\{ 0\right\} \rightarrow%TCIMACRO{\U{211d}}%
%BeginExpansion
\mathbb{R}%EndExpansion
$, then we can define a semi-definite inner product on $T\left(V\right)$
by
\[
\left\langle a,b\right\rangle _{\left\vert \phi\right\vert }:=\sum_{k=0}^{\infty}|\phi\left(k\right)|\left\langle a_{k},b_{k}\right\rangle _{k}.
\]
Let $N$ denote the linear subspace of $T\left(V\right)$ given by
the kernel of semi-norm $\left\vert \left\vert \cdot\right\vert \right\vert _{\left\vert \phi\right\vert }.$
Then we we can complete the quotient space $T\left(V\right)/N$ with
respect to inner product $\left\langle \cdot,\cdot\right\rangle _{\left\vert \phi\right\vert }$
and denote the resulting Hilbert space by $T_{|\phi|}\left(V\right).$
The bilinear form on $T\left(V\right)$ 
\begin{equation}
\left\langle a,b\right\rangle _{\phi}:=B_{\phi}\left(a,b\right):=\sum_{k=0}^{\infty}\phi\left(k\right)\left\langle a_{k},b_{k}\right\rangle _{k}\label{B on poly}
\end{equation}
extends to a continuous bilinear form on $T_{|\phi|}\left(V\right)$.
\ If $\phi$ is such that $\left\vert \phi\right\vert $ satisfies
Condition \ref{sum} then, as above, we define the $\phi$\textbf{-signature
kernel} of $\gamma$ and $\sigma$ to be the function $K_{\phi}^{\gamma,\sigma}:[a,b]\times\left[a,b\right]\rightarrow\mathbb{R}$
by 
\[
K_{\phi}^{\gamma,\sigma}\left(s,t\right):=\left\langle S\left(\gamma\right)_{a,s},S\left(\sigma\right)_{a,t}\right\rangle _{\phi}.
\]
This agrees with the previous definition whenever $\phi$ takes positive
values. 
\end{rem}

The following shifted weight functions arise naturally when doing calculus
on signature kernels.
\begin{defn}
Given a function $\phi:%TCIMACRO{\U{2115}}%
%BeginExpansion
\mathbb{N}%EndExpansion
\cup\left\{ 0\right\} \rightarrow%TCIMACRO{\U{211d}}%
%BeginExpansion
\mathbb{R}%EndExpansion
$ and $k\in%TCIMACRO{\U{2115}}%
%BeginExpansion
\mathbb{N}%EndExpansion
$, we define the $k$-shift of $\phi$ to be the function $\phi_{+k}:%TCIMACRO{\U{2115}}%
%BeginExpansion
\mathbb{N}%EndExpansion
\cup\left\{ 0\right\} \to\mathbb{R}$ determined by $\phi_{+k}\left(\cdot\right)=\phi\left(\cdot+k\right).$ 
\end{defn}

The next result is fundamental.
\begin{prop}
\label{pde result}Let $\gamma,\sigma:[a,b]\rightarrow V$ be two
continuous paths of bounded variation. Assume that the function \ $\phi:%TCIMACRO{\U{2115}}%
%BeginExpansion
\mathbb{N}%EndExpansion
\cup\left\{ 0\right\} \rightarrow%TCIMACRO{\U{211d}}%
%BeginExpansion
\mathbb{R}%EndExpansion
$ is such that $\left\vert \phi\right\vert $ and its $1$-shift $\left\vert \phi_{+1}\right\vert $
both satisfy Condition \ref{sum}. Then the $\phi$- and \ $\phi_{+1}$-
signature kernels of $\gamma$ and $\sigma$ are well defined and
are related by the two-parameter integral equation
\[
K_{\phi}^{\gamma,\sigma}\left(s,t\right)=\phi\left(0\right)+\int_{a}^{s}\int_{a}^{t}K_{\phi_{+1}}^{\gamma,\sigma}\left(u,v\right)\left\langle d\gamma_{u},d\sigma_{v}\right\rangle .
\]
\end{prop}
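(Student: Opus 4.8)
The plan is to expand both sides in their defining series and match them term by term, exploiting the recursive integral structure of the graded pieces of the signature. First I would split off the $k=0$ contribution to $K_{\phi}^{\gamma,\sigma}(s,t)=\sum_{k=0}^{\infty}\phi(k)\langle S(\gamma)_{a,s}^{k},S(\sigma)_{a,t}^{k}\rangle_{k}$: the degree-zero component of every signature is the scalar $1$, so this term equals $\phi(0)\langle 1,1\rangle_{0}=\phi(0)$. It then remains to show that the tail $\sum_{k\geq 1}$ reproduces the double integral. Well-definedness of both kernels is automatic here, since $|\phi|$ and $|\phi_{+1}|$ satisfy Condition \ref{sum}, so Lemma \ref{lem-S-in-Tphi} (together with the semi-definite extension in the preceding remark) places $\mathcal{S}$ inside the relevant completions.

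The engine of the argument is the first-level recursion for the homogeneous components, $S(\gamma)_{a,s}^{k}=\int_{a}^{s}S(\gamma)_{a,u}^{k-1}\otimes d\gamma_{u}$, combined with the fact that the Hilbert--Schmidt inner product on $V^{\otimes k}$ factorizes on elementary tensors as $\langle x\otimes v,y\otimes w\rangle_{k}=\langle x,y\rangle_{k-1}\langle v,w\rangle$. Using bilinearity and continuity of this finite-dimensional inner product to commute it with the (Riemann--Stieltjes) integrals, I would obtain for each fixed $k\geq 1$
\[
\langle S(\gamma)_{a,s}^{k},S(\sigma)_{a,t}^{k}\rangle_{k}=\int_{a}^{s}\int_{a}^{t}\langle S(\gamma)_{a,u}^{k-1},S(\sigma)_{a,v}^{k-1}\rangle_{k-1}\,\langle d\gamma_{u},d\sigma_{v}\rangle.
\]
Multiplying by $\phi(k)$, summing over $k\geq 1$, and reindexing by $j=k-1$ (so that $\phi(k)=\phi(j+1)=\phi_{+1}(j)$) converts the tail into $\int_{a}^{s}\int_{a}^{t}\bigl(\sum_{j\geq 0}\phi_{+1}(j)\langle S(\gamma)_{a,u}^{j},S(\sigma)_{a,v}^{j}\rangle_{j}\bigr)\langle d\gamma_{u},d\sigma_{v}\rangle$, and the inner sum is exactly $K_{\phi_{+1}}^{\gamma,\sigma}(u,v)$.

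The one genuinely nontrivial step — and the main obstacle — is justifying the interchange of the infinite sum over $j$ with the double Stieltjes integral. For this I would invoke Fubini--Tonelli against the finite product measure $|d\gamma|\otimes|d\sigma|$ on $[a,s]\times[a,t]$, using the pointwise estimates behind Lemma \ref{lem-S-in-Tphi}. Concretely, Cauchy--Schwarz on $V^{\otimes j}$ together with the length bound $\|S(\gamma)_{a,u}^{j}\|_{j}\leq L_{a,u}(\gamma)^{j}/j!$ gives $|\langle S(\gamma)_{a,u}^{j},S(\sigma)_{a,v}^{j}\rangle_{j}|\leq \bigl(L_{a,b}(\gamma)L_{a,b}(\sigma)\bigr)^{j}/(j!)^{2}$ uniformly over the domain, while $|\langle d\gamma_{u},d\sigma_{v}\rangle|\leq|d\gamma_{u}|\,|d\sigma_{v}|$. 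Hence the integrand is dominated by $\sum_{j\geq 0}|\phi_{+1}(j)|\,C^{j}/(j!)^{2}$ with $C=L_{a,b}(\gamma)L_{a,b}(\sigma)$, which is finite precisely because $|\phi_{+1}|$ satisfies Condition \ref{sum}. With the interchange licensed, the tail collapses to the claimed double integral of $K_{\phi_{+1}}^{\gamma,\sigma}$, completing the identity.
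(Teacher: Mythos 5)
Your proposal is correct and follows essentially the same route as the paper: the paper expands the kernel in coordinates $S(\gamma)_{a,s}(e_I^{\ast})$ and applies the recursive definition (\ref{sig def}) to peel off the last integration variable, which is precisely your coordinate-free recursion $S(\gamma)_{a,s}^{k}=\int_a^s S(\gamma)_{a,u}^{k-1}\otimes d\gamma_u$ combined with the factorization of the Hilbert--Schmidt inner product, followed by the same reindexing $k\mapsto k-1$ and sum--integral interchange. Your explicit Fubini--Tonelli justification of that interchange, dominated via the factorial decay bound and Condition \ref{sum} for $|\phi_{+1}|$, is a detail the paper leaves implicit.
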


\begin{proof}
Well definedness of the two signature kernels follows from the summability
conditions. Unravelling the definitions and using (\ref{sig def})
gives 
\begin{align*}
K_{\phi}^{\gamma,\sigma}\left(s,t\right) & =\sum_{k=0}^{\infty}\phi\left(k\right)\sum_{\left\vert I\right\vert =k}S\left(\gamma\right)_{a,s}\left(e_{I}\right)S\left(\sigma\right)_{a,t}\left(e_{I}^{\ast}\right)\\
%
%_{a,t}\left(e_{I}^{\ast}\right)
 & =\phi\left(0\right)+\sum_{k=1}^{\infty}\phi\left(k\right)\sum_{\left\vert I\right\vert =k-1}\int_{a}^{s}\int_{a}^{t}S\left(\gamma\right)_{a,u}\left(e_{I}^{\ast}\right)S\left(\sigma\right)_{a,v}\left(e_{I}^{\ast}\right)\left\langle d\gamma_{u},d\sigma_{v}\right\rangle \\
 & =\phi\left(0\right)+\int_{a}^{s}\int_{a}^{t}\sum_{k=0}^{\infty}\phi\left(k+1\right)\sum_{\left\vert I\right\vert =k}S\left(\gamma\right)_{a,u}\left(e_{I}^{\ast}\right)S\left(\sigma\right)_{a,v}\left(e_{I}^{\ast}\right)\left\langle d\gamma_{u},d\sigma_{v}\right\rangle \\
 & =\phi\left(0\right)+\int_{a}^{s}\int_{a}^{t}K_{\phi_{+1}}^{\gamma,\sigma}\left(s,t\right)\left\langle d\gamma_{u},d\sigma_{v}\right\rangle .
\end{align*}
\end{proof}
In the special case where $\phi$ is constant we see that the shift
$\phi_{+k}=\phi$ for every $k$ and therefore $K_{\phi}^{\gamma,\sigma}$
satisfies 
\[
K_{\phi}^{\gamma,\sigma}\left(s,t\right)=\phi\left(0\right)+\int_{a}^{s}\int_{a}^{t}K_{\phi}^{\gamma,\sigma}\left(s,t\right)\left\langle d\gamma_{u},d\sigma_{v}\right\rangle ,
\]
and in particular if $\gamma$ and $\sigma$ are differentiable and
$\phi\equiv1$ then we write $K_{\phi}^{\gamma,\sigma}=K^{\gamma,\sigma}$
and refer to it as the \textit{original signature kernel}. As was
first shown in \cite{clsw}, it\ solves the partial differential
equation 
\begin{equation}
\frac{\partial^{2}K^{\gamma,\sigma}\left(s,t\right)}{\partial s\partial t}=K^{\gamma,\sigma}\left(s,t\right)\left\langle \gamma_{s}^{\prime},\sigma_{t}^{\prime}\right\rangle \text{ on }\left[a,b\right]\times\left[a,b\right]\label{goursat}
\end{equation}
with boundary conditions \,$K\left(a,\cdot\right)\equiv K\left(\cdot,a\right)\equiv1.$
The same paper shows how the solution to (\ref{goursat}) can be approximated
numerically, and how the methodology extends to the case of rough
paths. The approximate solution can then be used to implement kernel
learning methods for classification or regression tasks based on time
series as mentioned in the introduction, see \cite{CO-2018,ko}.

It is self-evident from Proposition \ref{pde result} that for general
$\phi$ the function will not solve a PDE of the type (\ref{goursat}).
Nevertheless we can produce examples of different $\phi$ which do
by varying the inner-product $\left\langle \cdot,\cdot\right\rangle $
on the underlying vector space $V,$ or by scaling the inner product
on $T\left(V\right)$ homogeneously with respect the grading on $T(V)$. By the latter idea we mean that, for $\theta\in%TCIMACRO{\U{211d}}%
%BeginExpansion
\mathbb{R}%EndExpansion
$ we can define $\delta_{\theta}:T\left(V\right)\rightarrow T\left(V\right)$
to be the unique algebra homomorphism which is determined by scalar
multiplication by $\theta$ on $V$ (i.e. $V\ni a\mapsto\theta a$),
then we have 
\begin{equation}
\delta_{\theta}a=\sum_{k=0}^{\infty}\theta^{k}a_{k},\,\ \,\text{if }a=\sum_{k=0}^{\infty}a_{k}\in T\left(V\right).\label{hom scaling}
\end{equation}
The following lemma explores the properties of $\delta_{\theta}$
when it is extended to a homogeneous linear map defined on (a subspace of)
the Hilbert space $T_{\phi}\left(V\right).$
\begin{lem}
Suppose $0\neq\theta\in%TCIMACRO{\U{211d}}%
%BeginExpansion
\mathbb{R}%EndExpansion
$ and let $\phi:%TCIMACRO{\U{2115}}%
%BeginExpansion
\mathbb{N}%EndExpansion
\cup\left\{ 0\right\} \rightarrow%TCIMACRO{\U{211d}}%
%BeginExpansion
\mathbb{R}%EndExpansion
.$ Let $\theta\phi:%TCIMACRO{\U{2115}}%
%BeginExpansion
\mathbb{N}%EndExpansion
\cup\left\{ 0\right\} \rightarrow%TCIMACRO{\U{211d}}%
%BeginExpansion
\mathbb{R}%EndExpansion
$ denote the function defined by the pointwise product $\left(\theta\phi\right)\left(n\right)=\theta^{n}\phi\left(n\right)$ and let $\delta_{\theta}:T\left(V\right)\rightarrow T\left(V\right)$ be
the linear operator defined by (\ref{hom scaling}). Then:
\begin{enumerate}
\item For every $a,b\in T\left(V\right)$ we have the identity 
\begin{equation}
\left\langle \delta_{\theta}a,b\right\rangle _{\phi}=\left\langle a,\delta_{\theta}b\right\rangle _{\phi}=\left\langle a,b\right\rangle _{\theta\phi},\label{b id}
\end{equation}
which extends to $a,b\in T_{\left\vert \theta\phi\right\vert }\left(V\right)$.
The map $\delta_{\theta}$ extends uniquely to an isomorphism between
the Hilbert spaces $T_{\theta^{2}|\phi|}\left(V\right)$ and $T_{|\phi|}\left(V\right);$
\item For $\left\vert \theta\right\vert \leq1\,$ and $\phi>0$ we have
$T_{\phi}\left(V\right)\subseteq T_{\theta^{2}\phi}\left(V\right)$
and $\delta_{\theta}:T_{\phi}\left(V\right)$ $\rightarrow T_{\phi}\left(V\right)$
is a bounded self-adjoint linear 8operator with operator norm $\left\vert \left\vert \delta_{\theta}\right\vert \right\vert \leq1;$
\item For $\left\vert \theta\right\vert >1$ and $\phi>0,\,\ \delta_{\theta}$
is a linear operator $\delta_{\theta}:D(\delta_{\theta})$ $\rightarrow T_{\phi}\left(V\right)$
with domain $T_{\theta^{2}\phi}\left(V\right)\subseteq D(\delta_{\theta})\subset T_{\phi}\left(V\right).$
If furthermore $\phi$ satisfies Condition
\ref{sum}$,$ then $D(\delta_{\theta})$ is dense in $T_{\phi}\left(V\right)$
and $\delta_{\theta}$ is self-adjoint. 
\end{enumerate}
\end{lem}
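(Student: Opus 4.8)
The plan is to read all three parts off a single structural observation: $\delta_{\theta}$ acts on the graded piece $V^{\otimes k}$ as multiplication by the real scalar $\theta^{k}$, so every assertion reduces to elementary facts about the weights $\theta^{2k}\phi(k)$ and about diagonal (multiplication) operators on weighted $\ell^2$-spaces. For part (1) I would first verify the algebraic identity \eqref{b id} on $T\left(V\right)$, where all sums are finite and convergence is not an issue: for $a,b\in T\left(V\right)$,
\[
\langle \delta_{\theta}a,b\rangle_{\phi}=\sum_{k}\phi(k)\langle \theta^{k}a_{k},b_{k}\rangle_{k}=\sum_{k}\theta^{k}\phi(k)\langle a_{k},b_{k}\rangle_{k}=\langle a,b\rangle_{\theta\phi},
\]
and symmetrically for $\langle a,\delta_{\theta}b\rangle_{\phi}$. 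Both sides are continuous bilinear forms on $T_{|\theta\phi|}\left(V\right)\times T_{|\theta\phi|}\left(V\right)$, so the identity extends there by density of $T\left(V\right)$. For the isomorphism claim I would compute, for $a\in T\left(V\right)$,
\[
\|\delta_{\theta}a\|_{|\phi|}^{2}=\sum_{k}|\phi(k)|\,\theta^{2k}\|a_{k}\|_{k}^{2}=\|a\|_{\theta^{2}|\phi|}^{2},
\]
exhibiting $\delta_{\theta}$ as an isometry $T_{\theta^{2}|\phi|}\left(V\right)\to T_{|\phi|}\left(V\right)$. Since $\theta\neq 0$, the operator $\delta_{1/\theta}$ is a two-sided inverse (one checks $\delta_{\theta}\delta_{1/\theta}=\mathrm{id}$ on $T\left(V\right)$) and is itself isometric in the reverse direction, so $\delta_{\theta}$ extends uniquely to a surjective isometry, i.e. a Hilbert space isomorphism.

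For part (2) the hypothesis $|\theta|\le 1$ gives $\theta^{2k}\le 1$, hence $\theta^{2k}\phi(k)\le\phi(k)$ for $\phi>0$. This yields the norm comparison $\|\cdot\|_{\theta^{2}\phi}\le\|\cdot\|_{\phi}$ on $T\left(V\right)$, so a sequence that is Cauchy for $\|\cdot\|_{\phi}$ is Cauchy for $\|\cdot\|_{\theta^{2}\phi}$ and the completions satisfy $T_{\phi}\left(V\right)\subseteq T_{\theta^{2}\phi}\left(V\right)$. The same inequality, read as $\|\delta_{\theta}a\|_{\phi}^{2}=\sum_{k}\theta^{2k}\phi(k)\|a_{k}\|_{k}^{2}\le\|a\|_{\phi}^{2}$, shows $\delta_{\theta}$ is bounded with $\|\delta_{\theta}\|\le 1$ and extends to all of $T_{\phi}\left(V\right)$ by density. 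Self-adjointness is then immediate: the symmetric identity $\langle\delta_{\theta}a,b\rangle_{\phi}=\langle a,\delta_{\theta}b\rangle_{\phi}$ holds on the dense subspace $T\left(V\right)$ by part (1), and both sides are jointly continuous in $(a,b)$ because $\delta_{\theta}$ is bounded, so it persists on $T_{\phi}\left(V\right)$.

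For part (3), where $|\theta|>1$ and $\delta_{\theta}$ is genuinely unbounded, the real content is pinning down the domain and upgrading symmetry to self-adjointness. I would take the maximal domain $D(\delta_{\theta})=\{a\in T_{\phi}\left(V\right):\sum_{k}\theta^{2k}\phi(k)\|a_{k}\|_{k}^{2}<\infty\}$, which as a set of formal series coincides with $T_{\theta^{2}\phi}\left(V\right)$; since $\theta^{2k}\ge 1$ this sits inside $T_{\phi}\left(V\right)$, and the inclusion $D(\delta_{\theta})\subset T_{\phi}\left(V\right)$ is strict precisely because $\theta^{2k}\to\infty$. Density of $D(\delta_{\theta})$ holds since it contains the polynomials $T\left(V\right)$, dense in $T_{\phi}\left(V\right)$ by construction; here Condition \ref{sum} does genuine work, for by the factorial estimate of Lemma \ref{lem-S-in-Tphi} (applied with $C=\theta^{2}L_{s,t}(\gamma)^{2}$) it guarantees that the signatures $\mathcal{S}$ themselves lie in $T_{\theta^{2}\phi}\left(V\right)\subseteq D(\delta_{\theta})$, so the operator is defined on the objects of interest. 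The main obstacle is the self-adjointness, since for an unbounded operator symmetry (clear from part (1)) is strictly weaker; I would establish it by computing the adjoint domain explicitly. If $b\in D(\delta_{\theta}^{*})$ with $\langle\delta_{\theta}a,b\rangle_{\phi}=\langle a,c\rangle_{\phi}$ for all $a\in D(\delta_{\theta})$, then testing against basis tensors in each graded component forces $c_{k}=\theta^{k}b_{k}$; the requirement $c\in T_{\phi}\left(V\right)$ then reads $\sum_{k}\theta^{2k}\phi(k)\|b_{k}\|_{k}^{2}<\infty$, i.e. $b\in D(\delta_{\theta})$. Hence $D(\delta_{\theta}^{*})=D(\delta_{\theta})$ and $\delta_{\theta}$ is self-adjoint, which is the concrete manifestation in this graded setting of the general fact that multiplication by a real-valued (here unbounded) symbol is self-adjoint on its maximal domain.
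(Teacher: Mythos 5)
Your argument is correct and, for parts (1) and (2), follows essentially the same route as the paper: verify the identity on the tensor polynomials where all sums are finite, extend by the bound $|\langle\delta_{\theta}a,b\rangle_{\phi}|\leq\|a\|_{|\theta\phi|}\|b\|_{|\theta\phi|}$, read the isometry and the norm comparison off the weights $\theta^{2k}\phi(k)$, and get surjectivity from invertibility of $\delta_{\theta}$ on $T(V)$ (the paper uses $\delta_{\theta}(T(V))=T(V)$ where you use the two-sided inverse $\delta_{1/\theta}$; these are the same observation). Where you genuinely depart from, and improve on, the paper is part (3). The paper disposes of self-adjointness of the unbounded operator with the single remark that it ``is again a consequence of'' the symmetry identity, but for an unbounded operator symmetry is strictly weaker than self-adjointness; your explicit computation of the adjoint domain --- testing against basis tensors to force $c_{k}=\theta^{k}b_{k}$ and then reading off $D(\delta_{\theta}^{*})=T_{\theta^{2}\phi}(V)=D(\delta_{\theta})$ --- is exactly the step needed to close that gap, and is the standard self-adjointness proof for multiplication by a real unbounded symbol on its maximal domain. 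Your density argument is also cleaner: you observe that $T(V)\subseteq T_{\theta^{2}\phi}(V)\subseteq D(\delta_{\theta})$ is already dense in $T_{\phi}(V)$ by construction of the completion, whereas the paper routes through density of the linear span of $\mathcal{S}$, a nontrivial claim it does not justify; your remark that Condition \ref{sum} is really there to ensure the signatures themselves lie in the domain is the right way to read that hypothesis.
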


\begin{proof}
For item 1, the identity (\ref{b id}) follows from (\ref{B on poly}).
The extension to the completion follows from the fact that $|\left\langle \delta_{\theta}a,b\right\rangle _{\phi}|\leq\left\vert \left\vert a\right\vert \right\vert _{\left\vert \theta\phi\right\vert }\left\vert \left\vert b\right\vert \right\vert _{\left\vert \theta\phi\right\vert }.$That
$\delta_{\theta}$ is an isometry between the pre-Hilbert spaces $(T\left(V\right)/N,\left\langle \cdot,\cdot\right\rangle _{\theta^{2}\left\vert \phi\right\vert })$
and $(T\left(V\right)/N,\left\langle \cdot,\cdot\right\rangle _{\left\vert \phi\right\vert })$
follows from (\ref{b id}) and the identity $\delta_{\theta}^{2}=\delta_{\theta^{2}}$:
\[
\left\langle \delta_{\theta}a,\delta_{\theta}b\right\rangle _{\left\vert \phi\right\vert }=\left\langle a,\delta_{\theta}^{2}b\right\rangle _{\left\vert \phi\right\vert }=\left\langle a,b\right\rangle _{\theta^{2}\left\vert \phi\right\vert },
\]
which extends to the completion $T_{\theta^{2}|\phi|}\left(V\right)$.
Surjectivity follows from the fact that $\delta_{\theta}\left(T\left(V\right)\right)=T\left(V\right)$
for any non-zero $\theta.$ For item 2, it is readily seen that $\left\vert \left\vert a\right\vert \right\vert _{\theta^{2}\phi}\leq\left\vert \left\vert a\right\vert \right\vert _{\phi}$
when $\left\vert \theta\right\vert \leq1$ for all $a\in T\left(V\right)$
and hence that $T_{\phi}\left(V\right)\subseteq T_{\theta^{2}\phi}\left(V\right).$
By item 1 we then have $\left\vert \left\vert \delta_{\theta}a\right\vert \right\vert _{\phi}\leq\left\vert \left\vert a\right\vert \right\vert _{\phi}$
which then extends to $T_{\phi}\left(V\right).$ Self-adjointness follows
from the identity
\begin{equation}
\left\langle \delta_{\theta}a,b\right\rangle _{\phi}=\sum_{k=0}^{\infty}\theta^{k}\phi\left(k\right)\left\langle a_{k},b_{k}\right\rangle _{k}=\left\langle a,\delta_{\theta}b\right\rangle _{\phi},\text{ for all }a,b\in T_{\phi}\left(V\right).\label{sa}
\end{equation}
Finally, for item 3 we observe that $T_{\theta^{2}\phi}\left(V\right)$
is a linear subspace of $T_{\phi}\left(V\right)$ and then that $\delta_{\theta}(T_{\theta^{2}\phi}\left(V\right))\subseteq$
$T_{\phi}\left(V\right)$ using item 1. If $ \phi $
satisfies Condition \ref{sum} then the domain of $\delta_{\theta}$
contains the linear span of the set of signatures $\mathcal{S}$ (recall
(\ref{sig set})) which is dense in $T_{\phi}\left(V\right)$. Self-adjointness
is again a consequence of (\ref{sa}). 
\end{proof}
As an immediate corollary we obtain the following result, which we
shall use repeatedly.
\begin{cor}
\label{cor-delta-kernel}Suppose $\theta\in%TCIMACRO{\U{211d}}%
%BeginExpansion
\mathbb{R}%EndExpansion
$ and let $\phi:%TCIMACRO{\U{2115}}%
%BeginExpansion
\mathbb{N}%EndExpansion
\cup\left\{ 0\right\} \rightarrow%TCIMACRO{\U{211d}}%
%BeginExpansion
\mathbb{R}%EndExpansion
$ be such that $\left\vert \phi\right\vert $ satisfies Condition \ref{sum}
then 
\[
K_{\theta\phi}^{\gamma,\sigma}\left(s,t\right)=K_{\phi}^{\theta\gamma,\sigma}\left(s,t\right)=K_{\phi}^{\gamma,\theta\sigma}\left(s,t\right)
\]
for every $\left(s,t\right)\in\left[a,b\right]\times\left[a,b\right],$ where $\theta\gamma$ and $\theta \sigma$ denote the paths obtained by the pointwise multiplication of $\theta$ with $\gamma$ and $\sigma$ respectively. In particular if $\phi\equiv1$ then $K_{\theta\phi}^{\gamma,\sigma}:=$
$K_{\theta}^{\gamma,\sigma}$ satisfies 
\[
K_{\theta}^{\gamma,\sigma}\left(s,t\right)=1+\theta\int_{a}^{s}\int_{a}^{t}K_{\theta}^{\gamma,\sigma}\left(s,t\right)\left\langle d\gamma_{u},d\sigma_{v}\right\rangle .
\]
\end{cor}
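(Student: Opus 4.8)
The plan is to read the statement off from the dilation identity (\ref{b id}) of the preceding lemma, once I have recorded the elementary fact that rescaling a path is implemented on its signature by the graded dilation $\delta_\theta$. Concretely, from the iterated-integral description (\ref{sig}) each level-$k$ component satisfies $S(\theta\gamma)_{a,s}^{k}=\theta^{k}S(\gamma)_{a,s}^{k}$, since every one of the $k$ differentials $d\gamma$ acquires a factor $\theta$; comparing with (\ref{hom scaling}) this says exactly that $S(\theta\gamma)_{a,s}=\delta_\theta S(\gamma)_{a,s}$, and likewise $S(\theta\sigma)_{a,t}=\delta_\theta S(\sigma)_{a,t}$.

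Before invoking (\ref{b id}) I would check that all three kernels in the statement are well defined. Since $|\phi|$ satisfies Condition \ref{sum}, so does $|\theta\phi|$, because $|\theta\phi|(k)=|\theta|^{k}|\phi(k)|$ and $\sum_{k}C^{k}|\theta|^{k}|\phi(k)|(k!)^{-2}=\sum_{k}(C|\theta|)^{k}|\phi(k)|(k!)^{-2}$ is merely the defining series for $|\phi|$ evaluated at the constant $C|\theta|$. Hence $\mathcal{S}\subset T_{|\theta\phi|}(V)$ by Lemma \ref{lem-S-in-Tphi}, the signatures lie in the relevant completed spaces, and $K_{\theta\phi}^{\gamma,\sigma}$, $K_{\phi}^{\theta\gamma,\sigma}$, $K_{\phi}^{\gamma,\theta\sigma}$ are all finite. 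Now applying (\ref{b id}) with $a=S(\gamma)_{a,s}$ and $b=S(\sigma)_{a,t}$ gives
\[
K_{\theta\phi}^{\gamma,\sigma}(s,t)=\langle S(\gamma)_{a,s},S(\sigma)_{a,t}\rangle_{\theta\phi}=\langle \delta_\theta S(\gamma)_{a,s},S(\sigma)_{a,t}\rangle_{\phi}=\langle S(\theta\gamma)_{a,s},S(\sigma)_{a,t}\rangle_{\phi}=K_{\phi}^{\theta\gamma,\sigma}(s,t),
\]
and placing $\delta_\theta$ on the second argument instead (the other half of (\ref{b id})) yields $K_{\theta\phi}^{\gamma,\sigma}(s,t)=K_{\phi}^{\gamma,\theta\sigma}(s,t)$, which is the first assertion.

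For the ``in particular'' claim I would specialise to $\phi\equiv1$ and write $\psi:=\theta\phi$, so that $\psi(k)=\theta^{k}$ and $K_{\theta}^{\gamma,\sigma}=K_{\psi}^{\gamma,\sigma}$. Both $|\psi|$ and its shift $|\psi_{+1}|$ satisfy Condition \ref{sum} by the same rescaling-of-$C$ argument as above, so Proposition \ref{pde result} applies to $\psi$ and gives
\[
K_{\psi}^{\gamma,\sigma}(s,t)=\psi(0)+\int_{a}^{s}\int_{a}^{t}K_{\psi_{+1}}^{\gamma,\sigma}(u,v)\langle d\gamma_u,d\sigma_v\rangle.
\]
Here $\psi(0)=1$, while $\psi_{+1}(k)=\theta^{k+1}=\theta\,\psi(k)$ is an ordinary scalar multiple of $\psi$; since the map $\phi\mapsto K_{\phi}^{\gamma,\sigma}$ is linear in the weight, $K_{\psi_{+1}}^{\gamma,\sigma}=\theta K_{\psi}^{\gamma,\sigma}=\theta K_{\theta}^{\gamma,\sigma}$, and substituting yields the stated integral equation.

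There is no serious obstacle here; this genuinely is an immediate corollary. The only points requiring care are two pieces of bookkeeping: verifying that Condition \ref{sum} is inherited by $\theta\phi$ and by the shift $\psi_{+1}$ (so that (\ref{b id}) and Proposition \ref{pde result} may be applied on the completed Hilbert spaces, not merely on $T(V)$), and correctly distinguishing the two ways $\theta$ enters, namely the graded weight $\theta\phi$ with $(\theta\phi)(k)=\theta^{k}\phi(k)$ versus the plain scalar factor appearing in $\psi_{+1}=\theta\psi$.
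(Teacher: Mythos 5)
Your proof is correct and follows essentially the same route as the paper: both rest on the identity $S(\theta\gamma)=\delta_{\theta}S(\gamma)$ combined with the dilation identity (\ref{b id}) (equivalently, the self-adjointness of $\delta_{\theta}$) from the preceding lemma. Your extra bookkeeping on Condition \ref{sum} and your derivation of the integral equation via Proposition \ref{pde result} and linearity in the weight are fine elaborations of steps the paper leaves implicit.
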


\begin{proof}
We use the fact that $\delta_{\theta}S\left(\gamma\right)_{s,t}=S\left(\theta\gamma\right)_{s,t}$
and the previous lemma to observe that 
\[
K_{\theta\phi}^{\gamma,\sigma}\left(s,t\right)=\left\langle S\left(\gamma\right)_{a,s},S\left(\sigma\right)_{a,t}\right\rangle _{\theta\phi}=\left\langle \delta_{\theta}S\left(\gamma\right)_{a,s},S\left(\sigma\right)_{a,t}\right\rangle _{\phi}=K_{\phi}^{\theta\gamma,\sigma}\left(s,t\right).
\]
The fact that $K_{\phi}^{\theta\gamma,\sigma}\left(s,t\right)=K_{\phi}^{\gamma,\theta\sigma}\left(s,t\right)$
follows from the self-adjointness of $\delta_{\theta}.$ 
\end{proof}

\section{Representing General Signature Kernels\label{sec-Signature-Kernels}}

Let $\gamma$ be a continuous $V$-valued path of bounded variation.
Under the condition of Lemma \ref{lem-S-in-Tphi}, we can identify
the signature $S\left(\gamma\right)_{s,t}$ with an element of $T_{\phi}\left(V\right)$ and we can write 
\[
S\left(\gamma\right)_{s,t}:=\sum_{k=0}^{\infty}S\left(\gamma\right)_{s,t}^{k}\in T_{\phi}\left(V\right),
\]
where 
\[
S(\gamma)_{s,t}^{k}:=\int_{s<u_{1}<\cdots<u_{k}<t}d\gamma_{u_{1}}\cdots d\gamma_{u_{k}}\in V^{\otimes k},\ k\geq1
\]
and $S(\gamma)_{s,t}^{0}:=S(\gamma)_{s,t}(1)\equiv1$. 

Two properties in particular of the signature render it a good feature map.
First is its universality property; that is, provided one is careful about
definitions and topologies, continuous function on compact subspaces of
paths are uniformly approximable by linear functionals of the signature.
Central to this is a combination of the Stone-Weierstrass theorem and the
identity 
\[
S\left( \gamma \right) _{s,t}\left( f\right) S\left( \gamma \right)
_{s,t}\left( g\right) =S\left( \gamma \right) _{s,t}\left( f\shuffle g\right) 
\text{ for }f,g\in T\left( V^{\ast }\right) ,
\]%
where $f\shuffle g\in T\left( V^{\ast }\right) $ denotes the shuffle product
of the linear functionals $f$ and $g,$ see \cite{LCL}. The second property is
that signatures are characteristic in the sense that the expected signature
of a path-valued random variable will, under certain conditions,
characterise the law of that random variable, see \cite{HL2010,ko} for
more details. 

In the previous section we introduced the definition of the $\phi$-signature
kernel of continuous paths $\gamma$ and $\sigma$ to be the function
$K_{\phi}^{\gamma,\sigma}\left(s,t\right)$ . This amounts to reweighting
the terms in the signature to give more or less emphasis to high order
terms compared to the original signature kernel, i.e.
$\left\langle \cdot,\cdot\right\rangle _{\phi}$ for $\phi\equiv1$.
In the present section, we will build an approach to representing $\phi$-signature kernels in such a way 
that allows for efficient computation. The same idea is presented in multiple guises and then specialised within each case to 
yield particular examples. Before we present this method for $\phi$-signature kernels, we consider the error estimates which arise using a naive truncation-based approach. 

\subsection{Truncated Signature Kernels}

In this subsection, we give an error estimate of the truncated $\phi$-signature
kernel and the full $\phi$-signature kernel of two continuous bounded
variation paths. 

Let the truncated signature kernel be denoted
\begin{equation}
K_{\phi}^{(N)}(s,t):=\sum_{k=0}^{N}\phi(k)\left\langle S\left(\gamma\right)_{a,s}^{k},S\left(\sigma\right)_{a,t}^{k}\right\rangle _{k}=\sum_{k=0}^{N}\phi\left(k\right)\sum_{\left\vert I\right\vert =k}S\left(\gamma\right)_{a,s}\left(e_{I}^{*}\right)S\left(\sigma\right)_{a,t}\left(e_{I}^{*}\right).
\end{equation}
We have the following proposition.
\begin{prop}
\label{prop-error-bound}Let $\gamma,\sigma:[a,b]\rightarrow V$ be
two continuous paths of bounded variation. Assume that the function
$\phi:%TCIMACRO{\U{2115}}%
%BeginExpansion
\mathbb{N}%EndExpansion
\cup\left\{ 0\right\} \rightarrow%TCIMACRO{\U{211d}}%
%BeginExpansion
\mathbb{R}%EndExpansion
$ is such that $|\phi|$ satisfies Condition \ref{sum}, then the truncated
signature kernel $K_{\phi}^{(N)}(s,t)$ converges to the $\phi$-signature
kernel $K_{\phi}^{\gamma,\sigma}(s,t)$ when $N$ goes to infinity,
and the error bound is 
\begin{equation}
\left|K_{\phi}^{\gamma,\sigma}(s,t)-K_{\phi}^{(N)}(s,t)\right|\leq\sum_{k=N+1}^{\infty}|\phi(k)|(L_{s}(\gamma)L_{t}(\sigma))^{k}(k!)^{-2}
\end{equation}
where $L_{s}(\gamma)$ is the length of the path segment $\gamma|_{[a,s]}$.
\end{prop}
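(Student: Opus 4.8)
The plan is to recognise the error as the tail of an absolutely convergent series and then bound it term-by-term using Cauchy--Schwarz together with the factorial decay estimate already established in the proof of Lemma \ref{lem-S-in-Tphi}. First I would write
\[
K_{\phi}^{\gamma,\sigma}(s,t)-K_{\phi}^{(N)}(s,t)=\sum_{k=N+1}^{\infty}\phi(k)\left\langle S\left(\gamma\right)_{a,s}^{k},S\left(\sigma\right)_{a,t}^{k}\right\rangle _{k},
\]
which is legitimate because the series defining $K_{\phi}^{\gamma,\sigma}$ converges absolutely whenever $|\phi|$ satisfies Condition \ref{sum}; the triangle inequality then moves the modulus inside the sum.

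Next, for each fixed $k$ I would estimate the single inner product by Cauchy--Schwarz on $V^{\otimes k}$, giving $\left|\left\langle S(\gamma)_{a,s}^{k},S(\sigma)_{a,t}^{k}\right\rangle_{k}\right|\leq\|S(\gamma)_{a,s}^{k}\|_{k}\,\|S(\sigma)_{a,t}^{k}\|_{k}$, and then invoke the factorial decay bound $\|S(\gamma)_{a,s}^{k}\|_{k}\leq L_{s}(\gamma)^{k}/k!$ (and its analogue for $\sigma$). This bound is exactly the estimate derived in Lemma \ref{lem-S-in-Tphi}, where it is shown that $\|S(\gamma)_{a,s}^{k}\|_{k}^{2}=\sum_{|I|=k}\left[S(\gamma)_{a,s}(e_{I}^{*})\right]^{2}\leq L_{s}(\gamma)^{2k}/(k!)^{2}$. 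Multiplying the two per-term estimates yields the bound $(L_{s}(\gamma)L_{t}(\sigma))^{k}/(k!)^{2}$ on each summand, and summing from $k=N+1$ produces the claimed inequality.

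Finally, the stated convergence of $K_{\phi}^{(N)}$ to $K_{\phi}^{\gamma,\sigma}$ follows immediately, since the majorising series $\sum_{k}|\phi(k)|(L_{s}(\gamma)L_{t}(\sigma))^{k}(k!)^{-2}$ is summable by Condition \ref{sum} applied with $C=L_{s}(\gamma)L_{t}(\sigma)$, so its tail vanishes as $N\to\infty$. I do not expect any genuine obstacle here: the argument is entirely a matter of assembling standard estimates. The only point warranting a moment's care is verifying that the factorial decay estimate applies at the level of the norm $\|S(\gamma)_{a,s}^{k}\|_{k}$ of the degree-$k$ tensor itself, rather than merely to a sum over multi-indices, but this is precisely the identity $\|S(\gamma)_{a,s}^{k}\|_{k}^{2}=\sum_{|I|=k}\left[S(\gamma)_{a,s}(e_{I}^{*})\right]^{2}$ used in the proof of Lemma \ref{lem-S-in-Tphi}, so no additional work is required.
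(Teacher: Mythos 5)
Your argument is correct and coincides with the paper's own proof: both write the error as the tail $\sum_{k=N+1}^{\infty}\phi(k)\langle S(\gamma)_{a,s}^{k},S(\sigma)_{a,t}^{k}\rangle_{k}$, bound each term via Cauchy--Schwarz and the factorial decay estimate $\|S(\gamma)_{a,s}^{k}\|_{k}\leq L_{s}(\gamma)^{k}/k!$, and conclude convergence from the summability in Condition \ref{sum}. No gaps.
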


\begin{proof}
By the Cauchy-Schwarz inequality, we have
\begin{align*}
\left|K_{\phi}^{\gamma,\sigma}(s,t)-K_{\phi}^{(N)}(s,t)\right| & \leq\sum_{k=N+1}^{\infty}|\phi(k)|\left|\left\langle S\left(\gamma\right)_{a,s}^{k},S\left(\sigma\right)_{a,t}^{k}\right\rangle _{k}\right|\\
 & \leq\sum_{k=N+1}^{\infty}|\phi(k)|\left\vert \left\vert S\left(\gamma\right)_{a,s}^{k}\right\vert \right\vert _{k}\left\vert \left\vert S\left(\sigma\right)_{a,t}^{k}\right\vert \right\vert _{k}\\
 & =\sum_{k=N+1}^{\infty}|\phi(k)|\frac{(L_{s}(\gamma)L_{t}(\sigma))^{k}}{(k!)^{2}}.
\end{align*}
Since $|\phi|$ satisfies Condition \ref{sum}, the error goes to
0 as $N\to\infty$. 
\end{proof}
We analyse two concrete examples that we will revisit later using other methods.
\begin{itemize}
\item The first example takes $\phi$ to be 
\begin{equation}
\phi(k):=\left(\frac{k}{2}\right)!:=\Gamma\left(\frac{k}{2}+1\right)
\end{equation}
which plays an important role in Section \ref{sec-expected-signature-kernel} when we consider the expected signature of Brownian motion.
\item The second example is 
\begin{equation}
\phi(k)=\frac{\Gamma(m+1)\Gamma(k+1)}{\Gamma(k+m+1)}
\end{equation}
where $m\in\mathbb{R}_{+}$. The case when $m=0$, $\phi(k)\equiv1$
corresponds to the original signature kernel, while $m=1$ gives $\phi(k)=\frac{1}{k+1}$ which are the sequence of moments of a random variable which is uniformly distributed on $[0,1]$.
\end{itemize}
The following corollary specialises the previously-obtained error estimate to these cases.
\begin{cor}
Let $\gamma,\sigma:[a,b]\rightarrow V$ be two continuous paths of
bounded variation. Denote the length of the path segment $\gamma|_{[a,s]}$
as $L_{s}(\gamma)$. 

(1) The $\phi$-signature kernel under $\phi(k)=\left(\frac{k}{2}\right)!$
is well defined and there is a constant $C$ such that 
\begin{equation}
\left|K_{\phi}^{\gamma,\sigma}(s,t)-K_{\phi}^{(N)}(s,t)\right|\leq C\left(\frac{e}{2N+2}\right)^{N+1/2}e_{N+1}\left(L_{s}(\gamma)L_{t}(\sigma)\right)\label{eq-error-bound-1}
\end{equation}
where $e_{N+1}(x):=\sum_{k=N+1}^{\infty}\frac{x^{k}}{k!}$. 

(2) The $\phi$-signature kernel under $\phi(k)=\frac{\Gamma(m+1)\Gamma(k+1)}{\Gamma(k+m+1)}$
is well defined and the error bound is
\begin{equation}
\left|K_{\phi}^{\gamma,\sigma}(s,t)-K_{\phi}^{(N)}(s,t)\right|\leq\frac{\Gamma(m+1)}{\left(L_{s}(\gamma)L_{t}(\sigma)\right)^{\frac{m}{2}}}I_{m}^{(N+1)}\left(2\sqrt{L_{s}(\gamma)L_{t}(\sigma)}\right)\label{eq-error-bound-2}
\end{equation}
in which $I_{m}^{(N+1)}\left(z\right):=\left(\frac{z}{2}\right)^{m}\sum_{k=N+1}^{\infty}\frac{\left(\frac{1}{4}z^{2}\right)^{k}}{\Gamma(k+m+1)\Gamma(k+1)}$
is the tail of the series defining the modified Bessel function $I_{m}\left(z\right)$
of the first kind of order $m$. 
\end{cor}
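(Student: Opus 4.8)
The plan is to substitute each weight sequence into the general tail bound of Proposition~\ref{prop-error-bound}, which already supplies
\[
\left|K_{\phi}^{\gamma,\sigma}(s,t)-K_{\phi}^{(N)}(s,t)\right|\leq\sum_{k=N+1}^{\infty}|\phi(k)|\,\frac{x^{k}}{(k!)^{2}},\qquad x:=L_{s}(\gamma)L_{t}(\sigma),
\]
so that for each choice of $\phi$ two things remain: to confirm that $|\phi|$ meets Condition~\ref{sum} (hence that $K_{\phi}^{\gamma,\sigma}$ is well defined and the bound applies), and to recast the residual sum in the stated closed form. Well-definedness is the easy half. In case~(1), $\Gamma(k/2+1)\leq k!$ for every $k$ (because $k/2+1\leq k+1$ and $\Gamma$ is increasing beyond its minimum), so $C^{k}\phi(k)(k!)^{-2}\leq C^{k}/k!$ is summable; in case~(2), the ratio asymptotic $\Gamma(k+1)/\Gamma(k+m+1)\sim k^{-m}$ shows $\phi$ is bounded, and the same comparison with $\sum_{k}C^{k}/(k!)^{2}$ applies.

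Case~(2) is then purely algebraic. Using $\Gamma(k+1)=k!$ the summand collapses to
\[
|\phi(k)|\,\frac{x^{k}}{(k!)^{2}}=\frac{\Gamma(m+1)\,x^{k}}{\Gamma(k+m+1)\,\Gamma(k+1)},
\]
and I would recognise $\sum_{k\geq N+1}x^{k}/(\Gamma(k+m+1)\Gamma(k+1))$, after inserting $z=2\sqrt{x}$ into the definition of $I_{m}^{(N+1)}$, as $x^{-m/2}I_{m}^{(N+1)}(2\sqrt{x})$; multiplying back by $\Gamma(m+1)$ produces exactly \eqref{eq-error-bound-2}. Note that here the bound of Proposition~\ref{prop-error-bound} is in fact already an equality, so no further estimation is needed.

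The real content is case~(1), where the aim is to factor a clean, $k$-independent constant out of the sum so that what remains is $e_{N+1}(x)=\sum_{k\geq N+1}x^{k}/k!$. I would split $\phi(k)\,x^{k}/(k!)^{2}=(\Gamma(k/2+1)/k!)\cdot x^{k}/k!$ and estimate the prefactor uniformly for $k\geq N+1$. The cleanest route is to evaluate $\Gamma(k/2+1)/k!$ exactly through the Legendre duplication formula $k!=2^{k}\pi^{-1/2}\Gamma(\tfrac{k+1}{2})\Gamma(\tfrac{k}{2}+1)$, which collapses it to $\sqrt{\pi}\,2^{-k}/\Gamma(\tfrac{k+1}{2})$, and then to apply the Stirling lower bound $\Gamma(z)\geq\sqrt{2\pi}\,z^{z-1/2}e^{-z}$ to $\Gamma(\tfrac{k+1}{2})$. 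This yields $\Gamma(k/2+1)/k!\leq C'\,(e/(2k+2))^{k/2}$ for an absolute constant $C'$.

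The last step is a monotonicity remark: $k\mapsto(e/(2k+2))^{k/2}$ is decreasing for $k\geq1$ (its logarithm has negative derivative there), so over $k\geq N+1$ it is dominated by its value at $k=N+1$, which is at most $(e/(2N+2))^{(N+1)/2}$. Pulling this out of the sum and identifying the remainder as $e_{N+1}(x)$ then yields \eqref{eq-error-bound-1}, with the dominant factor $(e/(2N+2))^{(N+1)/2}$. I expect this uniform $\Gamma$-ratio estimate to be the main obstacle: one must apply Stirling in the lower-bound direction to $\Gamma(\tfrac{k+1}{2})$ and, crucially, exploit the monotonicity of the prefactor rather than bounding a single term, since it is precisely this monotonicity that legitimises pulling the dominant factor through the infinite sum. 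Everything else is bookkeeping.
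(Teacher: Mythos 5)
Your proposal is correct and follows essentially the same route as the paper: both cases are reduced to the tail bound of Proposition \ref{prop-error-bound}, case (2) is the same purely algebraic identification of the Bessel tail, and case (1) rests on the same estimate $\Gamma(k/2+1)/k!\leq C\,(e/(2k))^{k/2}$ together with the monotonicity of that sequence to pull out the $k=N+1$ value (so the exponent is indeed $(N+1)/2$, matching what the paper's own argument produces). The only cosmetic difference is that you obtain the Gamma-ratio via the Legendre duplication formula followed by a one-sided Stirling lower bound on $\Gamma(\tfrac{k+1}{2})$, whereas the paper applies two-sided Stirling bounds directly to $(k/2)!$ and $k!$.
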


\begin{proof}
It is easy to see that these two functions $\phi$ satisfy Condition
\ref{sum}, which makes sure that the $\phi$-signature kernels are
well defined. For the error bound (\ref{eq-error-bound-1}), by the
Stirling's approximation, there exist two constants $C_{1},C_{2}$
such that 
\[
C_{1}x^{x+\frac{1}{2}}e^{-x}\leq x!\leq C_{2}x^{x+\frac{1}{2}}e^{-x},\ \forall\ x>0.
\]
Then we have 
\[
\frac{\left(\frac{k}{2}\right)!}{k!}\leq\frac{C_{2}}{\sqrt{2}C_{1}}\left(\frac{e}{2k}\right)^{\frac{k}{2}}
\]
and the sequence on the right hand side is decreasing. Let $C=\frac{C_{2}}{\sqrt{2}C_{1}}$
and combine Proposition \ref{prop-error-bound}, it is easy to show
the error bound (\ref{eq-error-bound-1}). 

For the error bound (\ref{eq-error-bound-2}), since the modified
Bessel function $I_{m}\left(2\sqrt{L_{s}(\gamma)L_{t}(\sigma)}\right)$
of the first kind of order $m$ is defined by the series
\[
I_{m}\left(2\sqrt{L_{s}(\gamma)L_{t}(\sigma)}\right)=\left(L_{s}(\gamma)L_{t}(\sigma)\right)^{\frac{m}{2}}\sum_{k=0}^{\infty}\frac{(L_{s}(\gamma)L_{t}(\sigma))^{k}}{\Gamma(k+m+1)\Gamma(k+1)},
\]
the error bound follows from Proposition \ref{prop-error-bound}.
\end{proof}

\subsection{General Signature Kernels by Randomisation}

We now show how $\phi$-signature kernels can be represented, under suitable integrability conditions, as the average of rescaled PDE solutions whenever the sequence $\{\phi(k):k=0,1,...\}$ coincides with the sequence of moments of a random
variable. This representation consolidates the connection between the original and the $\phi$-signature kernels in these cases. The connection is captured in the following result. 
\begin{prop}
\label{thm-random-kernel}Suppose $\pi$ is a random variable with
finite moments of all orders and let the functions 
\begin{equation}
\phi(k)=\mathbb{E}[\pi^{k}]\ \text{and}\ \psi(k)=\mathbb{E}[|\pi|^{k}],\ \forall k\geq0.
\end{equation}
We assume that $\psi$ satisfies Condition \ref{sum}. Then the $\phi$-signature
kernel $K_{\phi}^{\gamma,\sigma}(s,t)$ of continuous bounded variation
paths $\gamma$ and $\sigma$ is well defined and 
\begin{equation}
K_{\phi}^{\gamma,\sigma}(s,t)=\mathbb{E}_{\pi}\left[K^{\pi\gamma,\sigma}(s,t)\right]=\mathbb{E}_{\pi}\left[K^{\gamma,\pi\sigma}(s,t)\right].\label{eq-random-kernel}
\end{equation}
\end{prop}

\begin{proof}
Since $|\phi|$ satisfies Condition \ref{sum}, which follows from
the condition of $\psi$, the $\phi$-signature kernel $K_{\phi}^{\gamma,\sigma}(s,t)$
is well defined. Furthermore, $\psi$ satisfies Condition \ref{sum},
by Fubini theorem, we have

\[
\begin{aligned}K_{\phi}^{\gamma,\sigma}(s,t) & =\sum_{k=0}^{\infty}\mathbb{E}\left[\pi^{k}\right]\left\langle S\left(\gamma\right)_{a,s}^{k},S\left(\sigma\right)_{a,t}^{k}\right\rangle _{k}\\
 & =\mathbb{E}\left[\sum_{k=0}^{\infty}\pi^{k}\left\langle S\left(\gamma\right)_{a,s}^{k},S\left(\sigma\right)_{a,t}^{k}\right\rangle _{k}\right]\\
 & =\mathbb{E}\left[\sum_{k=0}^{\infty}\left\langle S\left(\pi\gamma\right)_{a,s}^{k},S\left(\sigma\right)_{a,t}^{k}\right\rangle _{k}\right]\\
 & =\mathbb{E}\left[K^{\pi\gamma,\sigma}(s,t)\right].
\end{aligned}
\]
We conclude the proof. 
\end{proof}
\begin{rem}
If the random variable $\pi$ has a known probability density function,
the expectation in equation (\ref{eq-random-kernel}) can be calculated
by numerical methods such as Monte Carlo method or Gaussian quadrature
procedure. 
\end{rem}

The corollary below gives two specialisations of this result to the cases described earlier. 
\begin{cor}
\label{cor-sigkernel-exp}Let $\gamma,\sigma:[a,b]\rightarrow V$
be two continuous paths of bounded variation. 

(1) The $\phi$-signature kernel under $\phi(k)=\left(\frac{k}{2}\right)!$
satisfies  
\begin{equation}
K_{\phi}^{\gamma,\sigma}(s,t)=\mathbb{E}_{\pi}\left[K^{\pi^{1/2}\gamma,\sigma}(s,t)\right]=\mathbb{E}_{\pi}\left[K^{\gamma,\pi^{1/2}\sigma}(s,t)\right],\label{eq-f-kernel}
\end{equation}
where $\pi\sim\text{Exp}(1)$ is an exponentially distributed random
variable with intensity $1$. 

(2) The $\phi$-signature kernel $K_{\phi}^{\gamma,\sigma}(s,t)$
under $\phi(k)=\frac{\Gamma(m+1)\Gamma(k+1)}{\Gamma(k+m+1)}$ satisfies
equation (\ref{eq-random-kernel}) where $\pi\sim B(1,m)$ is a Beta-distributed
random variable. 
\end{cor}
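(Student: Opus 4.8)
The plan is to reduce both cases to Proposition \ref{thm-random-kernel} by exhibiting, in each case, a nonnegative random variable whose moment sequence coincides with $\phi$. Nonnegativity is convenient here because it forces $\psi=\phi$, so the hypothesis that $\psi$ satisfies Condition \ref{sum} reduces to the summability of $|\phi|$, which we have essentially already checked. Once the moment sequence is matched, the representation is immediate from the Proposition.

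For part (1), the key point is the reparametrisation hidden in the half-integer argument of the Gamma function. I would let $\pi\sim\text{Exp}(1)$ and set $\rho:=\pi^{1/2}\geq 0$, so that
\[
\mathbb{E}[\rho^{k}]=\mathbb{E}[\pi^{k/2}]=\int_{0}^{\infty}x^{k/2}e^{-x}\,dx=\Gamma\!\left(\tfrac{k}{2}+1\right)=\phi(k).
\]
Thus $\phi$ is the moment sequence of $\rho$, and since $\rho\geq 0$ the associated weight $\psi(k)=\mathbb{E}[|\rho|^{k}]$ equals $\phi(k)$. That $\psi$ satisfies Condition \ref{sum} is precisely what was established for $\phi(k)=(k/2)!$ in the preceding corollary, via the Stirling estimate $\tfrac{(k/2)!}{k!}\leq C\left(\tfrac{e}{2k}\right)^{k/2}$. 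Applying Proposition \ref{thm-random-kernel} to $\rho$ then gives
\[
K_{\phi}^{\gamma,\sigma}(s,t)=\mathbb{E}[K^{\rho\gamma,\sigma}(s,t)]=\mathbb{E}_{\pi}[K^{\pi^{1/2}\gamma,\sigma}(s,t)],
\]
and the second equality in (\ref{eq-f-kernel}) follows from the symmetric form already recorded in Proposition \ref{thm-random-kernel}.

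For part (2), I would take $\pi\sim B(1,m)$ directly, with density $m(1-x)^{m-1}$ on $[0,1]$. Its moments are computed from the Beta integral,
\[
\mathbb{E}[\pi^{k}]=\frac{1}{B(1,m)}\int_{0}^{1}x^{k}(1-x)^{m-1}\,dx=\frac{B(k+1,m)}{B(1,m)}=\frac{\Gamma(k+1)\Gamma(m+1)}{\Gamma(k+m+1)}=\phi(k),
\]
so $\phi$ is exactly the moment sequence of $\pi$. Here $\pi\in[0,1]$ forces $\psi(k)=\mathbb{E}[\pi^{k}]=\phi(k)\leq 1$, whence $\sum_{k}C^{k}\psi(k)(k!)^{-2}\leq\sum_{k}C^{k}(k!)^{-2}<\infty$ for every $C>0$ and Condition \ref{sum} holds trivially. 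Proposition \ref{thm-random-kernel} then yields (\ref{eq-random-kernel}) for this choice of $\pi$.

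The computations are routine, and I do not anticipate any genuine obstacle: the entire content is matching the two Gamma-function identities to the standard moment formulae for the exponential and Beta laws. The only step requiring a moment of care is the half-power reparametrisation in part (1) — recognising that it is $\pi^{1/2}$, and not $\pi$ itself, that plays the role of the moment-generating variable in Proposition \ref{thm-random-kernel}.
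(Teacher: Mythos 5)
Your proposal is correct and follows essentially the same route as the paper: in each case one identifies $\phi$ as the moment sequence of a nonnegative random variable ($\pi^{1/2}$ with $\pi\sim\text{Exp}(1)$ in part (1), $\pi\sim B(1,m)$ in part (2)) and invokes Proposition \ref{thm-random-kernel}. Your verification of Condition \ref{sum} for $\psi$ is slightly more explicit than the paper's, which simply defers to the earlier observation that these weights satisfy the condition, but the argument is the same.
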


\begin{proof}
For (1), we need to show that $\phi$ is all the moments of the random
variable $\pi^{1/2}$. Since $\pi\sim\text{Exp}(1)$, we have 
\[
\mathbb{E}\left[\text{\ensuremath{\pi}}^{k/2}\right]=\int_{0}^{\infty}x^{k/2}e^{-x}dx=\Gamma\left(\frac{k}{2}+1\right)=\phi(k).
\]
The equation (\ref{eq-f-kernel}) then follows from Theorem \ref{thm-random-kernel}.
For (2), since the random variable $\pi$ is Beta distributed, i.e.
$\pi\sim\textrm{Beta}(1,m)$, then the moments of $\pi$ are
\[
\mathbb{E}[\pi^{k}]=\frac{B(k+1,m)}{B(1,m)}=\frac{\Gamma(k+1)\Gamma(m+1)}{\text{\ensuremath{\Gamma}}(k+m+1)}=\phi(k).
\]
We conclude the proof. 
\end{proof}
The motivation for the representation (\ref{eq-random-kernel}) is
that we can design efficient and accurate computational methods to
compute the $\phi$-signature kernels. We will give details on the
Gaussian quadrature methods for the $\phi$-signature kernel in Section
\ref{subsec-Quadrature-Error-Estimates} below. 

\subsection{General Signature Kernels by Fourier Series}

We now extend the earlier discussion so that $\phi:%TCIMACRO{\U{2124}}%
%BeginExpansion
\mathbb{Z}%EndExpansion
\rightarrow$ $%TCIMACRO{\U{2102}}%
%BeginExpansion
\mathbb{C}%EndExpansion
$ is a complex-valued function. We consider the blinear form defined by the two-sided summation
\[
\left\langle a,b\right\rangle _{\phi}:=B_{\phi}\left(a,b\right):=\sum_{k=-\infty}^{\infty}\phi\left(k\right)\left\langle a_{|k|},b_{|k|}\right\rangle _{|k|},
\]
and the corresponding function 
\[
K_{\phi}^{\gamma,\sigma}\left(s,t\right):=\left\langle S\left(\gamma\right)_{a,s},S\left(\sigma\right)_{a,t}\right\rangle _{\phi}.
\]
If the coefficients are the Fourier coefficients of some known periodic function $f$ then the idea of the previous proposition can be applied to again derive a representation of $K_{\phi}^{\gamma,\sigma}$. The following result describes the needed conditions.
\begin{prop}
Suppose that $\gamma$ and $\sigma$ are continuous paths of bounded
$1$-variation. Let $\phi:%TCIMACRO{\U{2124}}%
%BeginExpansion
\mathbb{Z}%EndExpansion
\rightarrow$ $%TCIMACRO{\U{211d}}%
%BeginExpansion
\mathbb{C}%EndExpansion
$ be as above, and write $\phi_{k}:=\phi\left(k\right).$ Assume that
$\left\{ \phi_{k}:k\in%TCIMACRO{\U{2115}}%
%BeginExpansion
\mathbb{N}%EndExpansion
\right\} $ are the Fourier coefficients of some bounded integrable function
$f:(-\pi,\pi)\rightarrow%TCIMACRO{\U{2102}}%
%BeginExpansion
\mathbb{C}%EndExpansion
,$ i.e.
\[
f=\sum_{k=-\infty}^{\infty}\phi_{k}e_{k},\text{ with }e_{k}\left(x\right):=e^{ikx}.
\]
Then for all $\left(s,t\right)\in\left[a,b\right]\times\left[a,b\right]$
we have 
\begin{equation}
K_{\phi}^{\gamma,\sigma}\left(s,t\right)=\frac{1}{2\pi}\int_{-\pi}^{\pi}\bar{K}_{x}^{\gamma,\sigma}\left(s,t\right)f\left(x\right)dx-\phi_{0},
\end{equation}
where
\[
\bar{K}_{x}^{\gamma,\sigma}\left(s,t\right):=K^{\exp\left(-ix\right)\gamma,\sigma}\left(s,t\right)+K^{\exp\left(ix\right)\gamma,\sigma}\left(s,t\right).
\]
\end{prop}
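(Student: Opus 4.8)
The plan is to expand the complex-valued kernel $\bar K_x^{\gamma,\sigma}$ as a Fourier-type series in $k$ and then recognise the weighted combination with $f$ as picking out the coefficients $\phi_k$. First I would apply Corollary \ref{cor-delta-kernel} to each of the two rescaled kernels: since $\exp(\mp ix)\gamma$ is a complex rescaling of $\gamma$, we have $K^{\exp(-ix)\gamma,\sigma}(s,t)=\sum_{k=0}^\infty e^{-ikx}\langle S(\gamma)_{a,s}^k,S(\sigma)_{a,t}^k\rangle_k$ and similarly $K^{\exp(ix)\gamma,\sigma}(s,t)=\sum_{k=0}^\infty e^{ikx}\langle S(\gamma)_{a,s}^k,S(\sigma)_{a,t}^k\rangle_k$. (One should note that Corollary \ref{cor-delta-kernel} is stated for real $\theta$, so a short remark is needed that the scaling identity $\delta_\theta S(\gamma)=S(\theta\gamma)$ and the summability estimate used in Lemma \ref{lem-S-in-Tphi} extend verbatim to $|\theta|=1$, because $|e^{\pm ix}|=1$ leaves the absolute-value bounds on the coefficients unchanged.) Writing $c_k:=\langle S(\gamma)_{a,s}^k,S(\sigma)_{a,t}^k\rangle_k$, the two series add to $\bar K_x^{\gamma,\sigma}(s,t)=\sum_{k=0}^\infty (e^{-ikx}+e^{ikx})c_k = 2c_0 + \sum_{k=1}^\infty(e^{ikx}+e^{-ikx})c_k$, which I would symmetrise into a two-sided sum $\sum_{k=-\infty}^\infty e^{ikx}c_{|k|}$.

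Next I would insert this expansion into $\tfrac{1}{2\pi}\int_{-\pi}^\pi \bar K_x^{\gamma,\sigma}(s,t)f(x)\,dx$, substitute $f=\sum_{j}\phi_j e_j$ with $e_j(x)=e^{ijx}$, and interchange the sum and the integral. The orthogonality relation $\tfrac{1}{2\pi}\int_{-\pi}^\pi e^{ikx}e^{ijx}\,dx=\delta_{k,-j}$ then collapses the double sum: for each $k$ the surviving term is $\phi_{-k}$, so $\tfrac{1}{2\pi}\int_{-\pi}^\pi \bar K_x f\,dx = \sum_{k=-\infty}^\infty c_{|k|}\,\phi_{-k}$. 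Since $c_{|k|}=c_{|-k|}$, re-indexing $k\mapsto -k$ gives $\sum_{k=-\infty}^\infty \phi_k c_{|k|}$, which is exactly $\langle S(\gamma)_{a,s},S(\sigma)_{a,t}\rangle_\phi = K_\phi^{\gamma,\sigma}(s,t)$ as defined for the two-sided bilinear form. The stray $\phi_0$ arises because the two-sided sum counts the $k=0$ term once while $\bar K_x$ contributes $2c_0$; comparing, the integral produces $c_0(\phi_0+\phi_0)$ against the desired single $\phi_0 c_0$, and subtracting $\phi_0$ corrects this overcounting. I would verify this bookkeeping carefully, since it is the source of the correction term in the statement.

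The main obstacle is justifying the interchange of summation and integration, and the termwise application of orthogonality, under the stated hypotheses. The hypothesis is only that $f$ is bounded and integrable with the given Fourier coefficients; it does not directly hand us uniform convergence of $\sum_k c_{|k|}e^{ikx}$ together with $f$. The clean route is to use the factorial decay $|c_k|\le (L_s(\gamma)L_t(\sigma))^k/(k!)^2$ from Lemma \ref{lem-S-in-Tphi}, which makes $\sum_k |c_k|\,|e^{ikx}|$ converge uniformly in $x$ to a bounded continuous function; hence $\bar K_x^{\gamma,\sigma}(s,t)$ is the uniform limit of its partial sums on $[-\pi,\pi]$. Multiplying a uniformly convergent series by the integrable function $f$ and integrating then legitimises the term-by-term computation via dominated convergence, with dominating function $\bigl(\sum_k|c_k|\bigr)|f|\in L^1(-\pi,\pi)$. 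The identification $\tfrac{1}{2\pi}\int_{-\pi}^\pi e^{ikx}f(x)\,dx=\phi_{-k}$ is then just the definition of the Fourier coefficients of $f$ (with the sign convention fixed by $e_k(x)=e^{ikx}$), so no further regularity on $f$ is required. I would state the decay bound explicitly as the engine driving all the interchanges, and keep the orthogonality computation as a one-line consequence.
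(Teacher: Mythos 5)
Your proposal is correct and follows essentially the same route as the paper: both expand $K^{\exp(\pm ix)\gamma,\sigma}(s,t)=\sum_{k\geq 0}e^{\mp ikx}c_{k}$, invoke the factorial decay $|c_{k}|\leq (L_{s}(\gamma)L_{t}(\sigma))^{k}/(k!)^{2}$ to justify term-by-term integration against the bounded integrable $f$, and then use $\tfrac{1}{2\pi}\int_{-\pi}^{\pi}e_{\pm k}(x)f(x)\,dx=\phi_{\mp k}$, with the correction term $\phi_{0}$ coming from the double-counted $k=0$ contribution exactly as you describe. The only cosmetic difference is that you symmetrise $\bar{K}_{x}$ into a single two-sided sum before integrating, whereas the paper integrates the two one-sided series separately and adds.
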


\begin{proof}
Fixing $\left(s,t\right)$, we have for every $x\in\left(-\pi,\pi\right)$
that 
\[
K^{\exp\left(\pm ix\right)\gamma,\sigma}\left(s,t\right)=\sum_{k=0}^{\infty}e_{\pm k}\left(x\right)\left\langle S\left(\gamma\right)_{a,s}^{k},S\left(\sigma\right)_{a,t}^{k}\right\rangle _{k}=:\sum_{k=0}^{\infty}e_{\pm k}\left(x\right)c_{k}.
\]
The basic estimate $\left\vert c_{k}\right\vert \leq L_{\gamma}^{k}L_{\sigma}^{k}/\left(k!\right)^{2}$
where $L_{\gamma}$ is the length of the path $\gamma$ ensures that
$\sum_{k=0}^{N}c_{k}e_{\pm k}\left(\cdot\right)f\left(\cdot\right)$
converges uniformly to the series $\sum_{k=0}^{\infty}c_{k}e_{\pm k}\left(\cdot\right)f\left(\cdot\right)$
and hence 
\[
\frac{1}{2\pi}\int_{-\pi}^{\pi}K^{\exp\left(\pm ix\right)\gamma,\sigma}\left(s,t\right)f\left(x\right)dx=\frac{1}{2\pi}\sum_{k=0}^{\infty}c_{k}\int_{-\pi}^{\pi}e_{\pm k}\left(x\right)f\left(x\right)dx=\sum_{k=0}^{\infty}c_{k}\phi_{\mp k}.
\]
It follows that 
\begin{align*}
 & \frac{1}{2\pi}\int_{-\pi}^{\pi}\left[K^{\exp\left(-ix\right)\gamma,\sigma}\left(s,t\right)+K^{\exp\left(ix\right)\gamma,\sigma}\left(s,t\right)\right]f\left(x\right)dx\\
 & =\sum_{k=-\infty}^{\infty}c_{|k|}\phi_{k}+c_{0}\phi_{0}=K_{\phi}^{\gamma,\sigma}\left(s,t\right)+\phi_{0},
\end{align*}
as required. 
\end{proof}
\begin{rem}
Note that $\mathcal{R}K_{x}^{\gamma,\sigma}\left(s,t\right):=\operatorname{Re}K^{\exp\left(ix\right)\gamma,\sigma}\left(s,t\right)$
so that 
\[
\mathcal{R}K_{x}^{\gamma,\sigma}\left(s,t\right)=\sum_{k=0}^{\infty}\cos kx\left\langle S\left(\gamma\right)_{a,s}^{k},S\left(\sigma\right)_{a,t}^{k}\right\rangle _{k}.
\]
Together with $\mathcal{I}K_{x}^{\gamma,\sigma}\left(s,t\right):=\operatorname{Im}K^{\exp\left(ix\right)\gamma,\sigma}\left(s,t\right)$
it solves the 2-dimensional PDE
\[
\frac{\partial^{2}}{\partial s\partial t}\left(\begin{array}{c}
\mathcal{R}K_{x}^{\gamma,\sigma}\left(s,t\right)\\
\mathcal{I}K_{x}^{\gamma,\sigma}\left(s,t\right)
\end{array}\right)=\left(\begin{array}{cc}
\cos x & -\sin x\\
\sin x & \cos x
\end{array}\right)\left(\begin{array}{c}
\mathcal{R}K_{x}^{\gamma,\sigma}\left(s,t\right)\\
\mathcal{I}K_{x}^{\gamma,\sigma}\left(s,t\right)
\end{array}\right)\left\langle \gamma_{s}^{\prime},\sigma_{t}^{\prime}\right\rangle .
\]
\end{rem}

\begin{cor}
Special cases of the above result include:
\begin{enumerate}
\item If $\phi_{k}=0$ for $k<0$ then 
\[
K_{\phi}^{\gamma,\sigma}\left(s,t\right)=\frac{1}{2\pi}\int_{-\pi}^{\pi}K^{\exp\left(-ix\right)\gamma,\sigma}\left(s,t\right)f\left(x\right)dx.
\]
\item (Real Fourier series) Suppose
\[
f=a_{0}+\sum_{k=1}^{\infty}a_{k}c_{k}+\sum_{k=1}^{\infty}b_{k}s_{k},\text{ where }c_{k}\left(\cdot\right):=\cos\left(k\cdot\right),~s_{k}\left(\cdot\right):=\sin\left(k\cdot\right)
\]
with $\left\{ a_{k}\right\} $ and $\left\{ b_{k}\right\} $ real
sequences. If
\begin{equation}
\left\langle p,q\right\rangle _{\phi}:=\sum_{k=0}^{\infty}a_{k}\left\langle p_{k},q_{k}\right\rangle _{k},
\end{equation}
then 
\[
K_{\phi}^{\gamma,\sigma}\left(s,t\right)=\frac{1}{\pi}\int_{-\pi}^{\pi}\mathcal{R}K_{x}^{\gamma,\sigma}\left(s,t\right)f\left(x\right)dx-a_{0}.
\]
\end{enumerate}
\end{cor}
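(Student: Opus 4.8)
The plan is to deduce both parts from the preceding proposition together with the series representation of $\mathcal{R}K_x^{\gamma,\sigma}$ recorded in the remark. Throughout I write $\kappa_k := \left\langle S\left(\gamma\right)_{a,s}^{k},S\left(\sigma\right)_{a,t}^{k}\right\rangle_{k}$ for the signature inner-products, so that $\kappa_0 = 1$ and $|\kappa_k| \le (L_\gamma L_\sigma)^k/(k!)^2$; this last bound is exactly what licenses every interchange of summation and integration below, precisely as in the proof of the proposition, so no new analytic input is needed.

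For part (1), I would observe that the hypothesis $\phi_k = 0$ for $k<0$ makes the $\exp\left(ix\right)$ contribution degenerate. Reading off from the proposition's proof that $\frac{1}{2\pi}\int_{-\pi}^{\pi}K^{\exp\left(ix\right)\gamma,\sigma}\left(s,t\right)f\left(x\right)dx = \sum_{k\ge0}\kappa_k\phi_{-k}$, the right-hand side collapses to its single surviving term $\kappa_0\phi_0 = \phi_0$. Substituting this into the identity of the proposition cancels the $-\phi_0$ and removes the $K^{\exp\left(ix\right)\gamma,\sigma}$ summand, leaving exactly the claimed formula. Equivalently, one notes directly that $\frac{1}{2\pi}\int_{-\pi}^{\pi}K^{\exp\left(-ix\right)\gamma,\sigma}f\,dx = \sum_{k\ge0}\kappa_k\phi_k$, which already equals $K_\phi^{\gamma,\sigma}$ because the two-sided form defining $K_\phi^{\gamma,\sigma}$ reduces to the one-sided sum $\sum_{k\ge0}\phi_k\kappa_k$ once $\phi_k=0$ for $k<0$.

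For part (2), I would first translate the real Fourier data into complex coefficients via $\phi_0 = a_0$ and $\phi_{\pm k} = \tfrac12\left(a_k \mp i b_k\right)$ for $k\ge1$, which identifies $a_k = \phi_k + \phi_{-k}$ and shows that the redefined form $\left\langle p,q\right\rangle_\phi = \sum_{k\ge0} a_k\left\langle p_k,q_k\right\rangle_k$ yields $K_\phi^{\gamma,\sigma}\left(s,t\right) = \sum_{k\ge0} a_k\kappa_k$. Next, using the remark's expansion $\mathcal{R}K_x^{\gamma,\sigma}\left(s,t\right) = \sum_{k\ge0}\cos\left(kx\right)\kappa_k$ and interchanging sum and integral, I would evaluate $\frac{1}{\pi}\int_{-\pi}^{\pi}\mathcal{R}K_x^{\gamma,\sigma}f\,dx$ term by term against the orthogonality relations $\int_{-\pi}^{\pi}\cos\left(kx\right)\cos\left(jx\right)dx = \pi\delta_{kj}$ for $k,j\ge1$, $\int_{-\pi}^{\pi}\cos\left(kx\right)\sin\left(jx\right)dx = 0$, and $\int_{-\pi}^{\pi}\cos\left(kx\right)dx = 2\pi\delta_{k0}$.

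The only point that needs care — and the sole real obstacle — is the zero mode: for $k\ge1$ the projection of $f$ against $\cos\left(kx\right)$ produces $\pi a_k$, whereas the $k=0$ term produces $2\pi a_0$, a factor of two larger. Carrying this through gives $\frac{1}{\pi}\int_{-\pi}^{\pi}\mathcal{R}K_x^{\gamma,\sigma}f\,dx = 2a_0\kappa_0 + \sum_{k\ge1}a_k\kappa_k = 2a_0 + \sum_{k\ge1}a_k\kappa_k$, and it is exactly this doubled zero mode that the correction $-a_0$ in the statement is designed to remove, leaving $a_0 + \sum_{k\ge1}a_k\kappa_k = \sum_{k\ge0}a_k\kappa_k = K_\phi^{\gamma,\sigma}\left(s,t\right)$, as required. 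Everything else is routine bookkeeping of the orthogonality integrals, justified by the uniform convergence already exploited in the proposition.
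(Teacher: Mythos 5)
Your proof is correct and follows the route the paper intends: both parts are immediate from the preceding proposition (and its proof), with part (1) using that $\phi_{-k}=0$ kills the $K^{\exp(ix)\gamma,\sigma}$ contribution except for the $\phi_0$ term, and part (2) using $a_k=\phi_k+\phi_{-k}$ together with $\bar{K}_x=2\mathcal{R}K_x$ and the doubled zero mode that the $-a_0$ corrects. The paper states the corollary without proof, and your verification, including the careful handling of the $k=0$ normalisation, supplies exactly the intended argument.
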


In using this result the function $f$ should be chosen that the integral
can easily approximated numerically.
\begin{example}
The following simple examples illustrate the scope of these ideas.
\begin{enumerate}
\item The function $f\left(x\right)=x^{2}$ has the Fourier series $f=\sum_{k=-\infty}^{\infty}\phi_{k}e_{k}$
on $\left[-\pi,\pi\right]$ where
\[
\phi_{k}=\frac{4\left(-1\right)^{k}}{k^{2}},\phi_{0}=\frac{\pi^{2}}{3},
\]
and we obtain the identity
\[
\sum_{k=1}^{\infty}\frac{4\left(-1\right)^{k}}{k^{2}}\left\langle S\left(\gamma\right)_{a,s}^{k},S\left(\sigma\right)_{a,t}^{k}\right\rangle _{k}=\frac{1}{2\pi}\int_{-\pi}^{\pi}\mathcal{R}K_{x}^{\gamma,\sigma}\left(s,t\right)x^{2}dx-\frac{\pi^{2}}{3}.
\]
\item The periodic function $f\left(x\right)=e^{\cos x}\cos\left(\sin x\right)$
has Fourier series 
\[
f\left(x\right)=\sum_{k=0}^{\infty}\frac{1}{k!}\cos\left(kx\right)
\]
and so 
\[
\sum_{k=0}^{\infty}\frac{1}{k!}\left\langle S\left(\gamma\right)_{a,s}^{k},S\left(\sigma\right)_{a,t}^{k}\right\rangle _{k}=\frac{1}{\pi}\int_{-\pi}^{\pi}\mathcal{R}K_{x}^{\gamma,\sigma}\left(s,t\right)e^{\cos x}\cos\left(\sin x\right)dx-1.
\]
\item The Jacobi theta function is the $1$-periodic function 
\[
\theta\left(z;\tau\right)=1+2\sum_{k=1}^{\infty}e^{i\pi\tau k^{2}}\cos\left(2\pi kz\right),
\]
hence if we define $f\left(x;u\right):=\theta\left(\frac{x}{2\pi};\frac{iu}{\pi}\right),$
then $f\left(\cdot;u\right)=1+\sum_{k=1}^{\infty}e^{-uk^{2}}c_{k}$
and 
\[
\sum_{k=0}^{\infty}e^{-uk^{2}}\left\langle S\left(\gamma\right)_{a,s}^{k},S\left(\sigma\right)_{a,t}^{k}\right\rangle _{k}=\frac{1}{\pi}\int_{-\pi}^{\pi}\mathcal{R}K_{x}^{\gamma,\sigma}\left(s,t\right)f\left(x;u\right)dx-1.
\]
\end{enumerate}
\end{example}

\subsection{General Signature Kernels by Integral Transforms}

The main idea of the previous subsection was to look for a function
$f$ with Fourier series $\sum_{k\in%TCIMACRO{\U{2124}}%
%BeginExpansion
\mathbb{Z}%EndExpansion
}\phi\left(k\right)e_{k}.$ If such a function can be found, then we can calculate the bilinear
form $B_{\phi}$ evaluated at a pair of signatures. The difficulty
with this approach is that such a function may not exist in some cases of interest,
e.g. $\phi\left(k\right)=k^{-1/2},$ $\phi\left(k\right)=k!$ etc.
To simplify we forego the two-sided summation, and re-define 
\[
\left\langle a,b\right\rangle _{\phi}:=B_{\phi}\left(a,b\right):=\sum_{k=0}^{\infty}\phi\left(k\right)\left\langle a_{k},b_{k}\right\rangle _{k},
\]
where $\phi:%TCIMACRO{\U{2102}}%
%BeginExpansion
\mathbb{R}%EndExpansion
\rightarrow%TCIMACRO{\U{2102}}%
%BeginExpansion
\mathbb{C}%EndExpansion
$ is now defined on $\mathbb{R}$. We assume that $\phi$ is the integral
of a function $r:\mathbb{R\times R\rightarrow}%TCIMACRO{\U{2102}}%
%BeginExpansion
\mathbb{C}%EndExpansion
$ against a finite signed Borel measure $\mu$ on $\mathbb{R}$ such
that 
\begin{equation}
\phi\left(u\right)=\int_{C}r\left(u,z\right)\mu\left(dz\right);\text{ where }r\left(u,z\right)=g\left(z\right)^{\alpha u}\text{ }\in%TCIMACRO{\U{2102}}%
%BeginExpansion
\mathbb{C}%EndExpansion
\text{ for }\alpha\in%TCIMACRO{\U{211d}}%
%BeginExpansion
\mathbb{R}.%EndExpansion
\label{it}
\end{equation}

\begin{example}
\label{examples}We will consider three principal examples:
\begin{enumerate}
\item Fourier-Stieltjes transform: $C=%TCIMACRO{\U{211d}}%
%BeginExpansion
\mathbb{R}%EndExpansion
,\ g\left(z\right)=e^{-2\pi iz},\ \alpha=1$, i.e. $\phi\left(u\right)=\hat{\mu}\left(u\right):=\int_{%TCIMACRO{\U{211d}}%
%BeginExpansion
\mathbb{R}%EndExpansion
}e^{-2\pi iuz}\mu\left(dz\right);$
\item Laplace-Stieltjes transform: $C=\left(0,\infty\right),\ g\left(z\right)=e^{-z},\ \alpha=1$,
i.e. $\phi\left(u\right)=\tilde{\mu}\left(u\right):=$ $\int_{0}^{\infty}e^{-uz}\mu\left(dz\right);$
\item Mellin-Stieltjes transform: $C=\left(0,\infty\right),\ g\left(z\right)=z,\ \alpha=1$,
i.e. $\phi\left(u\right)=\mu_{\text{Mel}}\left(u+1\right)=\int_{0}^{\infty}z^{u}\mu\left(dz\right)$,
$\operatorname{Re}u>-1.$ 
\end{enumerate}
\end{example}

In the general case we can expect - under reasonable assumptions - that the
integral representation can be used to justify the calculation
\begin{align}
\left\langle S\left(\gamma\right)_{a,s},S\left(\sigma\right)_{a,t}\right\rangle _{\phi} & =\sum_{k=0}^{\infty}\int_{C}g\left(z\right)^{\alpha k}\mu\left(dz\right)\left\langle S\left(\gamma\right)_{a,s}^{k},S\left(\sigma\right)_{a,t}^{k}\right\rangle _{k}\nonumber \\
 & =\int_{C}\sum_{k=0}^{\infty}\left\langle S\left(g\left(z\right)^{\alpha}\gamma\right)_{a,s}^{k},S\left(\sigma\right)_{a,t}^{k}\right\rangle _{k}\mu\left(dz\right)\\
 & =\int_{C}K^{g\left(z\right)^{\alpha}\gamma,\sigma}\left(s,t\right)\mu\left(dz\right).\nonumber 
\end{align}
again allowing us to reduce the calculation of the the bilinear form to
a weighted integral over PDE solutions. On this occasion integration is w.r.t. the measure $\mu$ and the rescaling is determined  
by the form of the kernel function $r$ in the integral transform relating $\mu$ and $\phi$.
\begin{thm}
\label{general prop}Let $\mu$ be a finite signed Borel measure $\mu$
on $\mathbb{R}$. Suppose that $\phi:%TCIMACRO{\U{2102}}%
%BeginExpansion
\mathbb{R}%EndExpansion
\rightarrow%TCIMACRO{\U{2102}}%
%BeginExpansion
\mathbb{C}%EndExpansion
$ is such that
\[
\phi\left(k\right)=\int_{C}r\left(k,z\right)\mu\left(dz\right)\in%TCIMACRO{\U{2102}}%
%BeginExpansion
\mathbb{C}%EndExpansion
\,\text{, for all }k\in\mathbb{N}\cup\{0\}
\]
where $r\left(u,\cdot\right)$ is assumed to have the form $r\left(u,z\right)=$
$g\left(z\right)^{\alpha u}$ $\in%TCIMACRO{\U{2102}}%
%BeginExpansion
\mathbb{C}%EndExpansion
$ for $\alpha\in%TCIMACRO{\U{211d}}%
%BeginExpansion
\mathbb{R}%EndExpansion
$ and some function $g:\mathbb{C}\rightarrow%TCIMACRO{\U{2102}}%
%BeginExpansion
\mathbb{C}%EndExpansion
.$ Let 
\[
\gamma:\left[a,b\right]\rightarrow V\text{ and }\sigma:\left[a,b\right]\rightarrow V
\]
be continuous paths of bounded $1$-variation with signatures $S\left(\gamma\right)$
and $S\left(\sigma\right)$ respectively. For every $\left(s,t\right)\in\left[a,b\right]\times\left[a,b\right]$
and $k\in\mathbb{N\cup}\left\{ 0\right\} $ define 
\[
a_{k}\left(s,t\right):=\left\langle S\left(\gamma\right)_{a,s}^{k},S\left(\sigma\right)_{a,t}^{k}\right\rangle _{k}.
\]
Assume for every $\left(s,t\right)\in\left[a,b\right]\times\left[a,b\right]$
that
\begin{enumerate}
\item the integral $\int_{C}\left\vert r\left(k,z\right)\right\vert \left\vert \mu\left(dz\right)\right\vert <\infty,$
and
\item the series $\sum_{k}a_{k}\left(s,t\right)\int_{C}\left\vert r\left(k,z\right)\right\vert \left\vert \mu\left(dz\right)\right\vert $
converges absolutely, 
\end{enumerate}
then 
\begin{equation}
\left\langle S\left(\gamma\right)_{a,s},S\left(\sigma\right)_{a,t}\right\rangle _{\phi}=\int_{C}K^{g\left(z\right)^{\alpha}\gamma,\sigma}\left(s,t\right)\mu\left(dz\right).\label{result}
\end{equation}
\end{thm}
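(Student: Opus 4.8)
The plan is to establish the identity in (\ref{result}) by justifying the interchange of summation and integration that was already sketched heuristically in the displayed calculation preceding the statement. The structure of the argument is: (i) expand the $\phi$-inner product using the definition of $\langle\cdot,\cdot\rangle_\phi$ together with the integral representation $\phi(k)=\int_C r(k,z)\,\mu(dz)$; (ii) recognise the integrand, after summing in $k$, as a rescaled original signature kernel via Corollary \ref{cor-delta-kernel}; and (iii) invoke a Fubini-type theorem to swap $\sum_k$ and $\int_C$, with hypotheses (1) and (2) providing exactly the absolute-integrability bound that legitimises the swap.

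\textbf{Step-by-step.} First I would write
\[
\langle S(\gamma)_{a,s},S(\sigma)_{a,t}\rangle_\phi=\sum_{k=0}^\infty \phi(k)\,a_k(s,t)=\sum_{k=0}^\infty a_k(s,t)\int_C g(z)^{\alpha k}\,\mu(dz),
\]
using the definition of $a_k(s,t)$ and substituting the integral form of $\phi(k)$, where hypothesis (1) guarantees each inner integral is finite. The key point is that the product measure $\mu(dz)\times(\text{counting measure on }k)$ integrates the integrand $g(z)^{\alpha k}a_k(s,t)$ absolutely: by hypothesis (2), $\sum_k |a_k(s,t)|\int_C|r(k,z)|\,|\mu|(dz)<\infty$, so Fubini's theorem for the product of a finite signed measure and counting measure applies. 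Interchanging the order of the sum and integral then gives
\[
\int_C \sum_{k=0}^\infty g(z)^{\alpha k}\,a_k(s,t)\,\mu(dz).
\]
Next I would recognise the inner sum: since $a_k(s,t)=\langle S(\gamma)_{a,s}^k,S(\sigma)_{a,t}^k\rangle_k$ and $S(g(z)^\alpha\gamma)^k=g(z)^{\alpha k}S(\gamma)^k$ by the homogeneity of the signature (equivalently $\delta_{g(z)^\alpha}S(\gamma)_{a,s}=S(g(z)^\alpha\gamma)_{a,s}$), the summand equals $\langle S(g(z)^\alpha\gamma)_{a,s}^k,S(\sigma)_{a,t}^k\rangle_k$, and summing over $k$ yields exactly $K^{g(z)^\alpha\gamma,\sigma}(s,t)$ by the definition of the original signature kernel. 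This is precisely the content of Corollary \ref{cor-delta-kernel} with $\theta=g(z)^\alpha$, so the identity (\ref{result}) follows.

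\textbf{The main obstacle} is the rigorous justification of the interchange when $\mu$ is only a \emph{signed} measure and $g(z)^\alpha$ may be complex-valued, so that the usual Tonelli monotone-convergence shortcut is unavailable. This is exactly why hypotheses (1) and (2) are framed in terms of the total-variation measure $|\mu|$ and of $|r(k,z)|=|g(z)|^{\alpha k}$: they furnish a single dominating integrability bound $\sum_k|a_k(s,t)|\int_C|r(k,z)|\,|\mu|(dz)<\infty$ under which the complex/signed Fubini theorem is valid. A secondary technical point I would verify is that the convergence of the $k$-series defining each $K^{g(z)^\alpha\gamma,\sigma}(s,t)$ is uniform enough in $z$ on the support of $\mu$ to make the inner sum a genuine pointwise-defined $\mu$-integrable function; here the factorial decay $|a_k(s,t)|\le (L_s(\gamma)L_t(\sigma))^k/(k!)^2$ from Lemma \ref{lem-S-in-Tphi} combined with hypothesis (2) does the work. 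Beyond these integrability checks the argument is essentially bookkeeping, so I would keep the proof short and let conditions (1)--(2) carry the analytic weight.
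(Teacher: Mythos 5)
Your proposal is correct and follows essentially the same route as the paper's own proof: both hinge on applying Fubini's theorem to interchange $\sum_k$ and $\int_C$, with hypotheses (1) and (2) supplying the absolute-integrability bound, and then identifying the inner sum as $K^{g(z)^{\alpha}\gamma,\sigma}(s,t)$ via $r(u,z)=g(z)^{\alpha u}$ and the homogeneity of the signature. You simply spell out the bookkeeping (the role of the total-variation measure for signed $\mu$ and the identification via Corollary \ref{cor-delta-kernel}) that the paper leaves implicit.
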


\begin{rem}
Sufficient for item 2 is that $\sum_{k}\left(k!\right)^{-2}\int_{C}\left\vert h_{k}\left(z;s,t\right)\right\vert \left\vert dz\right\vert $
converges. 
\end{rem}

\begin{proof}
Assumptions 1 and 2 above ensure that Fubini's Theorem can be applied
to give
\[
\sum_{k=0}^{\infty}a_{k}\left(s,t\right)\int_{C}r\left(k,z\right)\mu\left(dz\right)=\int_{C}\sum_{k=0}^{\infty}a_{k}\left(s,t\right)r\left(k,z\right)\mu\left(dz\right),
\]
which can be seen to be the same as (\ref{result}) using the fact
$r\left(u,z\right)\equiv$ $g\left(z\right)^{\alpha u}.$ 
\end{proof}
\begin{cor}
For each of the three integral transforms in Example \ref{examples}
satisfying assumption 1 and 2 in the above theorem, we have (\ref{result}). 
\end{cor}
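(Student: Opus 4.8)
The plan is to read this corollary as a direct specialisation of Theorem \ref{general prop}: in each of the three cases the kernel $r(u,z)$ is already exhibited in Example \ref{examples} in exactly the multiplicative form $r(u,z)=g(z)^{\alpha u}$ demanded by the theorem, so no new mechanism is needed and the conclusion (\ref{result}) follows by substitution. First I would record, for each transform, the triple $(C,g,\alpha)$ read off from Example \ref{examples}: for the Fourier--Stieltjes transform $C=\mathbb{R}$, $g(z)=e^{-2\pi i z}$, $\alpha=1$, so that $r(k,z)=e^{-2\pi i k z}=(e^{-2\pi i z})^{k}$; for the Laplace--Stieltjes transform $C=(0,\infty)$, $g(z)=e^{-z}$, $\alpha=1$, so that $r(k,z)=e^{-kz}=(e^{-z})^{k}$; and for the Mellin--Stieltjes transform $C=(0,\infty)$, $g(z)=z$, $\alpha=1$, so that $r(k,z)=z^{k}$. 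In each instance the required identity $r(u,z)=g(z)^{\alpha u}$ is immediate, which is precisely the structural hypothesis on $r$ in Theorem \ref{general prop}.

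Next, since the corollary assumes that assumptions 1 and 2 of the theorem hold for each transform, the remaining step is simply to invoke it: with $a_k(s,t)=\langle S(\gamma)_{a,s}^{k},S(\sigma)_{a,t}^{k}\rangle_k$ and $r$ as above, Fubini's theorem applies by those two assumptions and yields
\[
\left\langle S(\gamma)_{a,s},S(\sigma)_{a,t}\right\rangle_{\phi}=\int_{C}K^{g(z)^{\alpha}\gamma,\sigma}(s,t)\,\mu(dz),
\]
which is exactly (\ref{result}) with $g(z)^{\alpha}$ equal to $e^{-2\pi i z}$, $e^{-z}$, and $z$ respectively. Thus the whole corollary reduces to checking the algebraic shape of $r$ and then quoting Theorem \ref{general prop}; there is no independent analytic content to establish.

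For completeness I would also remark on when assumptions 1 and 2 are automatic, which clarifies how restrictive the stated hypothesis actually is. For the Fourier case $|r(k,z)|=1$ and for the Laplace case $|r(k,z)|=e^{-kz}\le 1$ on $(0,\infty)$, so assumption 1 holds for \emph{any} finite signed measure, since $\int_{C}|r(k,z)|\,|\mu(dz)|\le\int_{C}|\mu(dz)|<\infty$; assumption 2 then follows from the factorial decay $|a_k(s,t)|\le (L_s(\gamma)L_t(\sigma))^{k}/(k!)^{2}$ established in Lemma \ref{lem-S-in-Tphi}, because $\sum_{k}|a_k(s,t)|\int_{C}|\mu(dz)|<\infty$. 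The Mellin case is the only genuinely constrained one: here $|r(k,z)|=z^{k}$, so assumption 1 becomes the moment condition $m_k:=\int_{0}^{\infty}z^{k}\,|\mu(dz)|<\infty$ and assumption 2 becomes $\sum_{k}|a_k(s,t)|\,m_k<\infty$, which by the same factorial bound holds provided $\sum_{k}(L_s(\gamma)L_t(\sigma))^{k}m_k/(k!)^{2}<\infty$, i.e. the moments $m_k$ do not grow faster than a fixed multiple of $(k!)^{2}$. I expect this moment bookkeeping in the Mellin case to be the only point requiring any care; the Fourier and Laplace cases are essentially automatic, and in all three the substantive work resides in Theorem \ref{general prop} rather than in the corollary itself.
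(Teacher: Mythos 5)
Your proposal is correct and coincides with the paper's treatment: the paper states this corollary without proof precisely because, as you observe, each of the three transforms already has the kernel in the multiplicative form $r(u,z)=g(z)^{\alpha u}$ required by Theorem \ref{general prop}, and the corollary's hypothesis supplies assumptions 1 and 2, so (\ref{result}) follows by direct substitution. Your additional bookkeeping on when assumptions 1 and 2 hold automatically (trivially for the Fourier and Laplace cases, via a moment-growth condition of order $(k!)^{2}$ in the Mellin case) is a correct and worthwhile clarification, though not required for the statement as given.
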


In a similar way we have the following results once again.
\begin{cor}
Let $\pi$ be a random variable with finite moments of all orders
and
\[
\phi\left(k\right)=\mathbb{E}\left[\pi^{k}\right]\ \text{and}\ \psi\left(k\right)=\mathbb{E}\left[|\pi|^{k}\right]\text{ for all }k\in\mathbb{N}\cup\left\{ 0\right\} 
\]
such that $\psi$ satisfies Condition \ref{sum}. Then
\[
K_{\phi}^{\gamma,\sigma}(s,t)=\mathbb{E}_{\pi}\left[K^{\pi\gamma,\sigma}(s,t)\right]=\mathbb{E}_{\pi}\left[K^{\gamma,\pi\sigma}(s,t)\right].
\]
\end{cor}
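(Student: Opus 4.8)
The plan is to recognise this corollary as the special case of Theorem \ref{general prop} in which the transform is the moment (Mellin-type) representation of $\phi$. Concretely, I would take $\mu$ to be the law of $\pi$---a probability measure, hence certainly a finite signed Borel measure on $\mathbb{R}$---together with $C=\mathbb{R}$, $g(z)=z$ and $\alpha=1$, so that $r(k,z)=g(z)^{\alpha k}=z^{k}$ and $\phi(k)=\int_{\mathbb{R}}z^{k}\,\mu(dz)=\mathbb{E}[\pi^{k}]$, exactly as required. This fits the template $r(u,z)=g(z)^{\alpha u}$ of the theorem.

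The only genuine work is to verify the two integrability hypotheses of Theorem \ref{general prop}. For hypothesis (1), $\int_{C}|r(k,z)|\,|\mu(dz)|=\int_{\mathbb{R}}|z|^{k}\,\mu(dz)=\mathbb{E}[|\pi|^{k}]=\psi(k)$, which is finite because $\pi$ has finite moments of all orders. For hypothesis (2), I would invoke the factorial decay bound underlying Lemma \ref{lem-S-in-Tphi}, namely $|a_{k}(s,t)|\le (L_{s}(\gamma)L_{t}(\sigma))^{k}/(k!)^{2}$ via Cauchy--Schwarz, to dominate the series $\sum_{k}a_{k}(s,t)\int_{C}|r(k,z)|\,|\mu(dz)|=\sum_{k}a_{k}(s,t)\psi(k)$ by $\sum_{k}\psi(k)\,C^{k}/(k!)^{2}$ with $C=L_{s}(\gamma)L_{t}(\sigma)$; this converges precisely because $\psi$ satisfies Condition \ref{sum}. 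With both hypotheses in hand, Theorem \ref{general prop} yields $\left\langle S(\gamma)_{a,s},S(\sigma)_{a,t}\right\rangle_{\phi}=\int_{\mathbb{R}}K^{z\gamma,\sigma}(s,t)\,\mu(dz)=\mathbb{E}_{\pi}[K^{\pi\gamma,\sigma}(s,t)]$.

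Finally, the second equality $\mathbb{E}_{\pi}[K^{\pi\gamma,\sigma}(s,t)]=\mathbb{E}_{\pi}[K^{\gamma,\pi\sigma}(s,t)]$ follows by integrating against $\mu$ the pointwise-in-$z$ identity $K^{z\gamma,\sigma}(s,t)=K^{\gamma,z\sigma}(s,t)$, which is exactly the self-adjointness of the dilation $\delta_{z}$ recorded in Corollary \ref{cor-delta-kernel}. I do not expect a real obstacle here: the statement in fact coincides verbatim with Proposition \ref{thm-random-kernel} (whose direct Fubini argument could be cited in place of the above), and all the analytic content is absorbed into Condition \ref{sum} on $\psi$ together with the factorial decay of the signature inner products. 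The one point to state carefully is that $\mu$ may be taken as a genuine positive probability measure, so that $|\mu|=\mu$ and hypothesis (1) reduces cleanly to finiteness of the absolute moments $\psi(k)$.
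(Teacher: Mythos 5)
Your proposal is correct and follows essentially the same route as the paper, which likewise proves this corollary by applying Theorem \ref{general prop} with $\mu$ taken to be the law of $\pi$ and $r(u,z)=z^{u}$. The only difference is that you spell out the verification of the two integrability hypotheses (via $\psi(k)=\mathbb{E}[|\pi|^{k}]$ and the factorial decay bound), which the paper leaves implicit.
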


\begin{proof}
Let $F$ be the distribution function of $\pi$. Apply Theorem \ref{general prop}
with $\mu=dF$ and $r\left(u,z\right)=z^{u}.$ 
\end{proof}
\begin{example}
These examples illustrate these results
\begin{enumerate}
\item For any $\beta>-1,$ the function $\phi\left(u\right)=\Gamma\left(u+\beta+1\right)=\int_{0}^{\infty}x^{u}x^{\beta}e^{-x}dx$
is the Mellin transform of $x^{\beta}e^{-x}$. Therefore, we have
\[
\sum_{k=0}^{\infty}\Gamma\left(k+\beta+1\right)\left\langle S\left(\gamma\right)_{a,s}^{k},S\left(\sigma\right)_{a,t}^{k}\right\rangle _{k}=\int_{0}^{\infty}K^{x\gamma,\sigma}\left(s,t\right)x^{\beta}e^{-x}dx.
\]
\item Suppose $\pi$ is a random variable, the expectation can be computed
in the following cases:
\begin{enumerate}
\item if $\pi$ is uniformly distributed on $\left[0,1\right],$ then it
equals 
\[
\sum_{k=0}^{\infty}\frac{1}{k+1}\left\langle S\left(\gamma\right)_{a,s}^{k},S\left(\sigma\right)_{a,t}^{k}\right\rangle _{k}=\int_{0}^{1}K^{x\gamma,\sigma}\left(s,t\right)dx;
\]
\item if $\pi$ has the Arcsine$\left(-1,1\right)$-distribution, i.e. $F_{\pi}\left(x\right)=\frac{2}{\pi}\arcsin\left(\sqrt{\frac{1+x}{2}}\right)$
, then: 
\begin{equation}
\sum_{k=0}^{\infty}%TCIMACRO{\tprod\nolimits _{r=0}^{2k-1}}%
%BeginExpansion
%{\textstyle \prod\nolimits _{r=0}^{2k-1}}
\prod_{r=0}^{2k-1}%EndExpansion
\frac{2r+1}{2r+2}\left\langle S\left(\gamma\right)_{a,s}^{k},S\left(\sigma\right)_{a,t}^{k}\right\rangle _{k}=\frac{1}{\pi}\int_{-1}^{1}\frac{K^{x\gamma,\sigma}\left(s,t\right)}{\sqrt{1-x^{2}}}dx;\label{acrsin}
\end{equation}
\item if $\pi$ has the Beta$\left(\alpha,\beta\right)$-distribution, then:
\[
\sum_{k=0}^{\infty}%TCIMACRO{\tprod\nolimits _{r=0}^{k-1}}%
%BeginExpansion
%{\textstyle \prod\nolimits _{r=0}^{k-1}}
\prod_{r=0}^{k-1}%EndExpansion
\frac{\alpha+\beta}{\alpha+\beta+r}\left\langle S\left(\gamma\right)_{a,s}^{k},S\left(\sigma\right)_{a,t}^{k}\right\rangle _{k}=\frac{1}{B(\alpha,\beta)}\int_{0}^{1}K^{x\gamma,\sigma}(s,t)x^{\alpha-1}\left(1-x\right)^{\beta-1}dx.
\]
\end{enumerate}
\end{enumerate}
\end{example}

\section{Computing General Signature Kernels\label{subsec-Quadrature-Error-Estimates}}
The usefulness of the formulae in the last section depend on being able
to numerically approximate integrals such as 
\[
\int_{a}^{b}f\left(x\right)w\left(x\right)dx
\]
where $[a,b]\subseteq%TCIMACRO{\U{211d}}%
%BeginExpansion
\mathbb{R}%EndExpansion
$, $w\in L^{1}\left(\left(a,b\right)\right)$ is a weight function,
which for the moment we assume to be positive. In the examples considered
the function $f$ to be integrated will be a scaling of the signature
kernel PDE, typically we will have 
\[
f\left(x\right)=K^{x\gamma,\sigma}(s,t).
\]
The classical approach to such approximations is to use a Gaussian
Quadrature Rule, see e.g. \cite{suli}

For a general weight function, suppose that $\mathcal{P=}\left\{ p_{n}:n\in%TCIMACRO{\U{2115}}%
%BeginExpansion
\mathbb{N}%EndExpansion
\cup\left\{ 0\right\} \right\} $ is a system of orthogonal polynomials w.r.t. the weight function
$w\,$ over $\left(a,b\right)$; that is $\deg\left(p_{n}\right)=n$
and $\left\langle p_{n},p_{m}\right\rangle _{w}=\int_{a}^{b}p_{m}p_{n}wdx=0$
for $n\neq m.$ Then the quadrature points $x_{k}$, $k=0,1,...,n$
are the zeros of the polynomial $p_{n+1}$, the corresponding quadrature
weights are 
\[
w_{k}:=\int_{a}^{b}w\left(x\right)%TCIMACRO{\tprod\nolimits _{i=0,i\neqk}^{n}}%
%BeginExpansion
%{\textstyle \prod\nolimits _{i=0,i\neqk}^{n}}
\prod_{i=0,i\neq k}^{n}%EndExpansion
\left(\frac{x-x_{i}}{x_{k}-x_{i}}\right)^{2}dx
\]
and the quadrature rule is the approximation
\[
\int_{a}^{b}f\left(x\right)w\left(x\right)dx\approx\sum_{k=0}^{n}w_{k}f\left(x_{k}\right).
\]
The approximation is exact if $f$ is a polynomial with $\deg\left(f\right)\leq2n+1.$
If $f$ is assumed to be $C^{2n+2},$ then the error in the quadrature
rule can be approximated by the basic estimate \cite{suli} 
\begin{equation}
\left\vert \int_{a}^{b}f\left(x\right)w\left(x\right)dx-\sum_{k=0}^{n}w_{k}f\left(x_{k}\right)\right\vert \leq\frac{f^{\left(2n+2\right)}\left(\xi\right)}{\left(2n+2\right)!}\int_{a}^{b}w\left(x\right)\pi_{n+1}\left(x\right)^{2}dx,\label{estimate}
\end{equation}
where $\xi\in\left(a,b\right)$ and 
\[
\pi_{n+1}\left(x\right)=%TCIMACRO{\tprod\nolimits _{i=0}^{n}}%
%BeginExpansion
%{\textstyle \prod\nolimits _{i=0}^{n}}
\prod_{i=0}^{n}%EndExpansion
\left(x-x_{i}\right)
\]
is the monic poynomial obtained by dividing $p_{n+1}$ by its leading
coefficient. In view of the bound (\ref{estimate}) we have the following 
\begin{lem}
Define $f\left(x\right):=K^{x\gamma,\sigma}(s,t)$ for $x\in%TCIMACRO{\U{211d}}%
%BeginExpansion
\mathbb{R}%EndExpansion
.$ Then $f$ is infinitely differentiable and, for every $k\in%TCIMACRO{\U{2115}}%
%BeginExpansion
\mathbb{N}%EndExpansion
$ , its $k$th derivative is given by 
\begin{equation}
f^{\left(k\right)}\left(x\right)=\sum_{l=0}^{\infty}x^{l}\frac{\left(l+k\right)!}{l!}\left\langle S\left(\gamma\right)_{a,s}^{l+k},S\left(\sigma\right)_{a,t}^{l+k}\right\rangle _{l+k}.\label{kth deriv}
\end{equation}
In particular, we have the estimate 
\begin{equation}
\left\vert f^{\left(k\right)}\left(x\right)\right\vert \leq\frac{L_{s}\left(\gamma\right)^{k/2}L_{t}\left(\sigma\right)^{k/2}}{|x|^{k/2}}I_{k}\left(2\sqrt{|x|L_{s}\left(\gamma\right)L_{t}\left(\sigma\right)}\right),\label{bound}
\end{equation}
where $L_{s}(\gamma)$ is the length of the path segment $\gamma|_{[a,s]}$
and $I_{k}$ is the modified Bessel function of the first kind of
order $k.$ 
\end{lem}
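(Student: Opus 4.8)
The plan is to recognise $f(x) = K^{x\gamma,\sigma}(s,t)$ as the value of an entire power series in $x$ and then to read off both the derivative formula and the bound from its coefficients. By Corollary~\ref{cor-delta-kernel} together with the definition of $\langle\cdot,\cdot\rangle_\phi$ applied with $\phi\equiv 1$, we have
\[
f(x)=K^{x\gamma,\sigma}(s,t)=\sum_{n=0}^{\infty}a_n x^n,\qquad a_n:=\left\langle S(\gamma)_{a,s}^{n},S(\sigma)_{a,t}^{n}\right\rangle_n .
\]
First I would record the coefficient estimate $|a_n|\le (L_s(\gamma)L_t(\sigma))^n/(n!)^2$, which is immediate from the Cauchy--Schwarz inequality and the factorial decay $\|S(\gamma)_{a,s}^n\|_n\le L_s(\gamma)^n/n!$ already exploited in Lemma~\ref{lem-S-in-Tphi} and Proposition~\ref{prop-error-bound}. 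Since $(n!)^{1/n}\to\infty$, this bound gives $\limsup_n|a_n|^{1/n}=0$, so the series has infinite radius of convergence; in particular $f$ extends to an entire function of a complex variable and is real-analytic on $\mathbb{R}$.

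Because a power series may be differentiated term by term throughout its disc of convergence, the infinite radius of convergence established above shows at once that $f$ is $C^\infty$ on all of $\mathbb{R}$, proving the first assertion. Differentiating $k$ times and reindexing by $l=n-k$ gives
\[
f^{(k)}(x)=\sum_{n=k}^{\infty}a_n\frac{n!}{(n-k)!}x^{n-k}=\sum_{l=0}^{\infty}x^l\frac{(l+k)!}{l!}\,a_{l+k},
\]
which is exactly (\ref{kth deriv}) once $a_{l+k}$ is written out.

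For the bound (\ref{bound}) I would pass to absolute values inside this series and insert the coefficient estimate, obtaining
\[
\left|f^{(k)}(x)\right|\le\sum_{l=0}^{\infty}\frac{(L_s(\gamma)L_t(\sigma))^{l+k}}{(l+k)!\,l!}\,|x|^l .
\]
The only substantive step is then to identify the right-hand side with a rescaled modified Bessel function: substituting $z=2\sqrt{|x|L_s(\gamma)L_t(\sigma)}$ into the defining series $I_k(z)=\sum_{l\ge 0}(z/2)^{2l+k}/(l!\,(l+k)!)$ and multiplying by $(L_s(\gamma)L_t(\sigma))^{k/2}|x|^{-k/2}$ reproduces this sum term for term. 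This gives (\ref{bound}) for $x\ne 0$; the apparent singularity of the prefactor $|x|^{-k/2}$ at $x=0$ is harmless, since $I_k(2\sqrt{|x|L_s(\gamma)L_t(\sigma)})$ vanishes to order $k/2$ there and the bound passes to the limit, where its right-hand side tends to $(L_s(\gamma)L_t(\sigma))^k/k!\ge k!\,|a_k|=|f^{(k)}(0)|$. There is no genuine obstacle here beyond this Bessel-series bookkeeping; the main point to be careful about is simply that the estimate loses nothing beyond the single application of Cauchy--Schwarz, the majorising series being matched exactly by the series for $I_k$.
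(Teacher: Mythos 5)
Your proposal is correct and follows essentially the same route as the paper: term-by-term differentiation of the entire power series $\sum_n a_n x^n$, followed by the Cauchy--Schwarz coefficient bound $|a_{l+k}|\le (L_s(\gamma)L_t(\sigma))^{l+k}/((l+k)!)^2$ and the identification of the resulting majorant with the series for $I_k$. The only differences are that you spell out the radius-of-convergence justification and the $x=0$ limiting case, which the paper leaves implicit.
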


\begin{proof}
Differentiablity is a simple argument on term-by-term differentiation
of power series. Applying this argument $k$ times results in the
formula (\ref{kth deriv}). The bound (\ref{bound}) can be obtained
by the elementary estimate
\begin{align*}
\left\vert f^{\left(k\right)}\left(x\right)\right\vert  & \leq\sum_{l=0}^{\infty}|x|^{l}\frac{\left(l+k\right)!}{l!}\frac{L_{s}\left(\gamma\right)^{l+k}L_{t}\left(\sigma\right)^{l+k}}{\left(l+k\right)!^{2}}\\
 & =\frac{L_{s}\left(\gamma\right)^{k/2}L_{t}\left(\sigma\right)^{k/2}}{|x|^{k/2}}I_{k}\left(2\sqrt{|x|L_{s}\left(\gamma\right)L_{t}\left(\sigma\right)}\right).
\end{align*}
\end{proof}
For any $x\in%TCIMACRO{\U{211d}}%
%BeginExpansion
\mathbb{R}%EndExpansion
,$ $k\in%TCIMACRO{\U{2115}}%
%BeginExpansion
\mathbb{N}%EndExpansion
$ it is easy to derive from (\ref{kth deriv}) the crude estimate 
\[
\left\vert f^{\left(k\right)}\left(x\right)\right\vert \leq\frac{L_{s}\left(\gamma\right)^{k}L_{t}\left(\sigma\right)^{k}}{k!}\exp\left(|x|L_{s}\left(\gamma\right)L_{t}\left(\sigma\right)\right),
\]
which could be refined e.g. by considering estimate on ratios of Bessel
functions $I_{k+1}/I_{k}.$ Putting things together we obtain.
\begin{prop}
Let $\mathcal{P=}\left\{ p_{n}:n\in%TCIMACRO{\U{2115}}%
%BeginExpansion
\mathbb{N}%EndExpansion
\cup\left\{ 0\right\} \right\} $ be a system of orthogonal polynomials with respect to a continuous
positive weight function $w\in L^{1}\left(a,b\right).$ For every
$n$ the error in the associated quadrature is bounded above by
\begin{align*}
 & \quad\left\vert \int_{a}^{b}K^{x\gamma,\sigma}(s,t)w\left(x\right)dx-\sum_{k=0}^{n}w_{k}K^{x_{k}\gamma,\sigma}(s,t)\right\vert \\
 & \leq\frac{L_{s}\left(\gamma\right)^{2n+2}L_{t}\left(\sigma\right)^{2n+2}\exp\left(|\xi|L_{s}\left(\gamma\right)L_{t}\left(\sigma\right)\right)}{[\left(2n+2\right)!]^{2}}\int_{a}^{b}w\left(x\right)\pi_{n+1}\left(x\right)^{2}dx.
\end{align*}
\end{prop}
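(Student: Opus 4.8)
The plan is to apply the generic Gaussian quadrature error formula (\ref{estimate}) to the specific integrand $f(x) := K^{x\gamma,\sigma}(s,t)$ and then control the resulting high-order derivative using the uniform derivative bound already established for $f$. The preceding lemma guarantees that $f$ is infinitely differentiable, so in particular $f\in C^{2n+2}(\mathbb{R})$ and the smoothness hypothesis required by (\ref{estimate}) is satisfied. Applying (\ref{estimate}) with this choice of $f$ then gives, for some $\xi\in(a,b)$,
\[
\left\vert \int_{a}^{b}K^{x\gamma,\sigma}(s,t)\,w(x)\,dx-\sum_{k=0}^{n}w_{k}K^{x_{k}\gamma,\sigma}(s,t)\right\vert \leq\frac{\left\vert f^{(2n+2)}(\xi)\right\vert}{(2n+2)!}\int_{a}^{b}w(x)\pi_{n+1}(x)^{2}\,dx.
\]

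Next I would bound the derivative $\left\vert f^{(2n+2)}(\xi)\right\vert$ using the crude estimate derived from (\ref{kth deriv}), taken with $k=2n+2$ and evaluated at $x=\xi$, namely
\[
\left\vert f^{(2n+2)}(\xi)\right\vert \leq\frac{L_{s}(\gamma)^{2n+2}L_{t}(\sigma)^{2n+2}}{(2n+2)!}\exp\left(\vert\xi\vert L_{s}(\gamma)L_{t}(\sigma)\right).
\]
Substituting this into the displayed quadrature error bound and collecting the two factorial factors into $[(2n+2)!]^{2}$ produces precisely the asserted inequality. Thus the argument is a direct composition of the two facts proved immediately above: the smoothness and derivative formula for $f$ on one side, and the standard quadrature remainder estimate on the other.

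There is no substantive obstacle here, since the essential analytic work has already been carried out in establishing both the infinite differentiability of $f$ and the uniform derivative estimate. The only point deserving a moment's attention is that the evaluation point $\xi$ furnished by the quadrature remainder formula is only known to lie in $(a,b)$; this causes no difficulty because the crude derivative estimate holds uniformly for every real argument, so it applies at $\xi$ regardless of its precise location, and the factor $\exp(\vert\xi\vert L_{s}(\gamma)L_{t}(\sigma))$ is simply carried through into the final bound rather than being majorised by a supremum over $(a,b)$. I would note in passing that the sharper Bessel-function estimate (\ref{bound}) could be used in place of the crude one if a tighter constant were desired, but the exponential form is what yields the clean statement claimed here.
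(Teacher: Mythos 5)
Your proposal is correct and is exactly the argument the paper intends: the proposition is stated as an immediate consequence ("Putting things together we obtain") of the quadrature remainder estimate (\ref{estimate}) combined with the crude derivative bound $\left\vert f^{(k)}(x)\right\vert \leq L_{s}(\gamma)^{k}L_{t}(\sigma)^{k}(k!)^{-1}\exp\left(|x|L_{s}(\gamma)L_{t}(\sigma)\right)$ taken at $k=2n+2$. Your remark that the exponential factor in $\xi$ is simply carried through, and that the Bessel-function bound would give a sharper constant, is consistent with the paper's presentation.
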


\begin{example}
%\begin{enumerate}
Let $\left(a,b\right)=\left(-1,1\right),$ $w\left(x\right)=\frac{1}{\pi}\frac{1}{\sqrt{1-x^{2}}}$
as in the earlier example (\ref{acrsin}). Then $\mathcal{P}$ can
be the family of Chebyshev polynomials of the first kind $p_{n}=T_{n}$
in which case (see \cite{AS}) 
\[
\int_{a}^{b}w\left(x\right)\pi_{n+1}\left(x\right)^{2}dx=\frac{1}{2^{2n+1}}.
\]
Therefore if $\gamma$ and $\sigma$ have lengths at most $L$ the
degree $n+1$ quadrature rule results in an error at most
\[
R_{n}=\frac{L^{4n+4}\exp\left(L^{2}\right)}{2^{2n+1}[\left(2n+2\right)!]^{2}}.
\]
To give some idea of the number of points needed (and hence the number
of PDEs solutions needed), if $L=10$ then $R_{25}=e^{-8.6017}$,
$R_{30}=e^{-50.492},$ whereas if $L=100$ then $R_{1050}=e^{-49.497}$.
The ratio
\[
\frac{R_{n+1}}{R_{n}}=\frac{L^{4}}{4(2n+4)^{2}(2n+3)^{2}},
\]
articulates the trade off between the length $L$ and the number
of points $n$.
%\item

%\end{enumerate}
\end{example}

\begin{example}
The $\phi$-signature kernel $K_{\phi}^{\gamma,\sigma}(s,t)$
for $\phi(k)=\left(\frac{k}{2}\right)!$ is studied in Corollary
\ref{cor-sigkernel-exp}. In this case the random variable $\pi$ is exponentially
distributed, hence $\pi^{1/2}$ is Rayleigh distributed
with density $w(x)=2xe^{-x^{2}},\ x>0$. We have
\begin{equation}
K_{\phi}^{\gamma,\sigma}(s,t)=\mathbb{E}\left[K^{\pi^{1/2}\gamma,\sigma}(s,t)\right]=\int_{0}^{\infty}2K^{x\gamma,\sigma}(s,t)xe^{-x^{2}}dx.
\end{equation}
Let $f(x)=K^{x\gamma,\sigma}(s,t)$, then  
\[
K_{\phi}^{\gamma,\sigma}(s,t)=2\int_{0}^{\infty}f(x)xe^{-x^{2}}dx
\]
which can be numerically calculated by the classical Gaussian quadrature
formula (see e.g. \cite{Shizgal-1981,SBG-1969}),
\[
\int_{0}^{\infty}f(x)xe^{-x^{2}}dx\approx\sum_{k=0}^{n}w_{k}f(x_{k}).
\]
The abscissae $x_{k},\ k=0,1,\cdots,n$ are the roots of a $(n+1)$-th
degree polynomial $p_{n+1}(x)$ and $w_{k}$ are the weights of quadrature. Explicit values are given in \cite{Shizgal-1981,SBG-1969}. 
\end{example}

\section{Expected General Signature Kernels \label{sec-expected-signature-kernel}}

We develop our earlier discussion to consider how $\phi$-signature kernels can be combined with the notion of 
expected signatures to compare the laws of two stochastic processes. In the examples we study one of the measures will be
Wiener's measure, which we denote by $\mathcal{W}$ and the other will be denote by $\mu$. The measure $\mu$ will typically discrete and supported on bounded variation paths, Our aim will be to compute
\[
K_{\phi}^{\mathcal{W},\mu}\left(s,t\right)=\left\langle \mathbb{E}_{X\sim\mathcal{W}}\left[  S\left(  X\right)
_{0,1}\right]  ,\mathbb{E}_{X\sim\mu}\left[  S\left(  X\right)  _{0,1}\right]
\right\rangle _{\phi},
\]
where $ S\left(  X\right)$ denotes the Stratonovich signature of $X$. We will sometimes write $\mathbb{E}\left[S\left(\circ B\right)_{0,s}\right]$, for a Brownian motion $B$, in place of $ \mathbb{E}_{X\sim\mathcal{W}}\left[  S\left(  X\right)
_{0,1}\right] $ to emphasise the fact that the signature is constructed via Stratonvich calculus.

As an initial step, we assume that $\gamma$ is a fixed (deterministic) continuous path of  bounded variation. We look to obtain formula for the $\phi$-signature
kernel of the expected Stratonovich signature of Brownian motion and $\gamma$, i.e.
\[
K_{\phi}^{\mathcal{W},\gamma}\left(s,t\right):=\left\langle \mathbb{E}\left[S\left(\circ B\right)_{0,s}\right],S\left(\gamma\right)_{0,t}\right\rangle _{\phi}
\]
A key idea to doing this will be to use notion of the hyperbolic development
of $\gamma$ which has been used in earlier study of the signature and, in this context, was initiated by \cite{HL2010}. We summarise the essential background in the section below.

\subsection{Hyperbolic Development}

We gather the basic notation and results. Readers seeking further details can consult the references \cite{BG2019,HL2010,LX}. We let $\mathbb{H}^{d}$
denote $d$-dimensional hyperbolic space realised as the hyperboloid
$\{x\in\mathbb{R}^{d+1}:x\ast x=-1,\ x_{d+1}>0\}$ endowed with the
Minkowski product 
\[
x\ast y=\sum_{i=1}^{d}x_{i}y_{i}-x_{d+1}y_{d+1}\text{, for }x=(x_{1},\cdots,x_{d},x_{d+1})\in\mathbb{R}^{d+1}.
\]
It is well known that this defines a Riemannian metric when restricted
to the tangent bundle of $\mathbb{H}^{d}$. We let $\operatorname{d_{\mathbb{H}^{d}}}$
denote the associated Riemannian distance function and recall that
\begin{equation}
\cosh\operatorname{d_{\mathbb{H}^{d}}}(x,y)=-x\ast y,\label{eq_cosh}
\end{equation}
see e.g. \cite{Cannon}. Define the linear map $F:\mathbb{C}^{d}\rightarrow\mathcal{M}_{d+1}\left(\mathbb{C}\right)$
into the space of $d+1$ by $d+1$ matrices over $\mathbb{C}$ by
\begin{equation}
F:x\rightarrow\begin{pmatrix}0 & x\\
x^{T} & 0
\end{pmatrix}.\label{eq-F-map}
\end{equation}
Then if $V$ is a real inner product space of dimension $d$ and $\gamma:[a,b]\rightarrow V$
is continuous path of bounded variation then, by fixing an orthonormal basis of
$V,$ and writing $\gamma$ in this basis as $\gamma=(\gamma_{t_{1}},...,\gamma_{t_{d}})$ we can solve the linear differential equation 
\begin{equation}
d\Gamma_{s,t}(u)=F(d\gamma(u))\Gamma_{s,t}(u),\ u\in\lbrack s,t]\subset\lbrack a,b],\text{with }\Gamma_{s,t}(s)=I=I_{d+1}\label{eq_Gamma_ode}
\end{equation}
uniquely. In the case the map $\gamma|_{[s,t]}\mapsto\Gamma_{s,t}\left(\cdot\right)$
takes a path segment in $V$ into one in the isometry group of $\mathbb{H}^{d}.$ The resulting $\Gamma_{s,t}\left(\cdot\right)~$ is called the \textit{Cartan Development
of the path segment} $\gamma|_{[s,t]}.$ It satisfies the multiplicative
property 
\begin{equation}
\Gamma_{u,t}(t)\Gamma_{s,u}(u)=\Gamma_{s,t}(t),\ s\leq u\leq t.\label{eq_multiplicative}
\end{equation}
To simplify things we write $\Gamma(t):=\Gamma^{\gamma}(t):=\Gamma_{a,b}(t)$
for $t\in\lbrack a,b]$. It is elementary to represent $\Gamma$ as
the convergent series 
\begin{equation}
\Gamma(t)=I+\sum_{n=1}^{\infty}\int_{a<t_{1}<\cdots<t_{n}<t}F(d\gamma({t_{1}}))\cdots F(d\gamma({t_{n}})).\label{eq_Gamma_sig}
\end{equation}
Then letting $o=(0,\cdots,0,1)^{T}\in\mathbb{H}^{d}, $we define $\sigma(t):=\Gamma(t)o$
to be the \textit{hyperbolic development of the path} $\gamma$ onto
$\mathbb{H}^{d}$, and we write $\sigma_{\gamma}$ to emphasise the dependence
on $\gamma.$

A global coordinate chart for $\mathbb{H}^{d}$ is determined
by $\mathbb{H}^{d}\ni m\mapsto\left(\eta,\rho\right)\in\mathbb{S}^{d-1}\times$
$\mathbb{R}_{+}$ where $\left(\eta\sinh\rho,\cosh\rho\right)=m.$
Using these coordinates, we define 
\[
\eta\left(t\right)=\eta_{\gamma}(t)=\eta\left(\sigma_{\gamma}\left(t\right)\right)\in\mathbb{S}^{d-1}\text{ and }\rho\left(t\right)=\rho_{\gamma}\left(t\right)=\rho\left(\sigma_{\gamma}\left(t\right)\right)\in\mathbb{R}_{+}.
\]
The following identity follows from (\ref{eq_Gamma_sig}) and (\ref{eq_cosh}):
\begin{equation}
\cosh\rho_{\gamma}(t)=\Gamma_{d+1,d+1}(t)=1+\sum_{n=1}^{\infty}\int_{a<t_{1}<\cdots<t_{2n}<t}\langle d\gamma({t_{1}}),d\gamma({t_{2}})\rangle\cdots\langle d\gamma({t_{2n-1}}),d\gamma({t_{2n}})\rangle,%
\label{eq_cosh_series}
\end{equation}
where $\Gamma\left(t\right)=\left(\Gamma_{ij}\left(t\right)\right)_{i,j=1,\cdots,d+1}$.
We will need to broaden this discussion to consider the development of paths after complex rescaling. To this end, if $\gamma$ is as above and $z \in \mathbb{C}$ then we let $z\gamma$ denote the path in  $V^{\mathbb{C}}$,  the complexification of $V$. We will be interested in the relationship
between the solution to (\ref{eq_Gamma_ode}), when $\gamma$ is replaced by $z\gamma$, and the series (\ref{eq_cosh_series}).
The following lemma identifies the structure we need.
\begin{lem}
\label{lem-complex-hd}Let $\gamma:[a,b]\rightarrow V$ be a continuous
path of bounded variation. For $z\in%TCIMACRO{\U{2102}}%
%BeginExpansion
\mathbb{C}%EndExpansion
$ let $z\gamma:\left[a,b\right]\rightarrow V^{\mathbb{C}}$ be the rescaling
of $\gamma$ by $z\in \mathbb{C}$. Given an orthonormal basis of $V$, write $\gamma_{t}=$ $\left(\gamma_{t}^{1},...,\gamma_{t}^{d}\right) \in \mathbb{R}^{d}$ and $z\gamma\left(t\right):=\left(z\gamma_{t}^{1},...,z\gamma_{t}^{d}\right) \in \mathbb{C}^{d}$
in terms of this basis. Then 
\begin{equation}
d\Gamma^{z\gamma}(u)=F(d\left(z\gamma\right)(u))\Gamma^{z\gamma}(u),\ u\in\lbrack a,b],\text{with }\Gamma^{z\gamma}(s)=I_{d+1}\label{lin}
\end{equation}
has a unique solution in $\mathcal{M}_{d+1}\left(\mathbb{C}\right)$
and furthermore the entry 
\begin{equation}
\Gamma_{d+1,d+1}^{z\gamma}(t)=1+\sum_{n=1}^{\infty}z^{2n}\int_{0<t_{1}<...<t_{2n}<t}\left\langle d\gamma_{t_{1}},d\gamma_{t_{2}}\right\rangle ...\left\langle d\gamma_{t_{2n-1}},d\gamma_{t_{2n}}\right\rangle .\label{series-1}
\end{equation}
If $\gamma$ is a piecewise linear path defined by the concatenation
\[
\gamma_{v_{1}}\ast\gamma_{v_{2}}....\ast\gamma_{v_{n}}:\left[a,b\right]\rightarrow V,
\]
i.e. $\gamma$ is such that $\gamma_{v_{i}}^{\prime}\left(t\right)=v_{i}\in \mathbb{R}^{d}$
for $t\in(t_{i-1},t_{i})$. Then the solution to (\ref{lin}) is given
explicitly by the matrix product
\begin{equation}
\Gamma^{z\gamma}\left(b\right)=A\left(v_{n},\Delta_{n},z\right)A\left(v_{n-1},\Delta_{n-1},z\right)\cdots A\left(v_{1},\Delta_{1},z\right),\label{eq-piecewise-solution}
\end{equation}
where $\Delta_{i}=t_{t}-t_{i-1}$ and 
\begin{equation}
A\left(v,\Delta,z\right):=I_{d+1}+\sinh\left(z\left\vert v\right\vert \Delta\right)M+\left(\cosh\left(z \left\vert v\right\vert \Delta\right)-1\right)M^{2}\label{eq-piecewise-onepiece}
\end{equation}
in which 
\[
M=\begin{pmatrix}0 & \tilde{v}\\
\tilde{v}^{T} & 0
\end{pmatrix}\in\mathcal{M}_{d+1}\left(\mathbb{R}\right)\text{ with }\tilde{v}=\frac{v}{\left\vert v\right\vert }.
\]
\end{lem}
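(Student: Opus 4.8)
The plan is to treat the three assertions in turn, all flowing from the linearity of $F$ together with two short algebraic computations. First I would record that $F$ is a complex-linear map, so that $F(d(z\gamma)(u)) = z\,F(d\gamma(u))$ and the equation (\ref{lin}) is a \emph{linear} matrix ODE driven by the bounded variation path $\gamma$. Recasting it as a Volterra integral equation and iterating, the Picard scheme converges because the iterated integrals against $\gamma$ satisfy the usual factorial decay; this simultaneously yields existence, uniqueness in $\mathcal{M}_{d+1}(\mathbb{C})$, and the absolutely convergent series representation
\[
\Gamma^{z\gamma}(t) = I + \sum_{n=1}^{\infty} z^n \int_{a<t_1<\cdots<t_n<t} F(d\gamma_{t_1})\cdots F(d\gamma_{t_n}),
\]
the complex-rescaled analogue of (\ref{eq_Gamma_sig}).

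The heart of the second assertion is the block computation $F(x)F(y) = \left(\begin{smallmatrix} xy^{T} & 0 \\ 0 & \langle x,y\rangle\end{smallmatrix}\right)$, which shows that a product of two $F$'s is block diagonal with bottom-right entry the pairing $\langle x,y\rangle$. Since each factor $F(x)$ is itself block anti-diagonal, a product of an \emph{odd} number of factors is block anti-diagonal and so has vanishing $(d+1,d+1)$ entry, whereas a product of $2m$ factors is block diagonal with
\[
\big[F(x_1)\cdots F(x_{2m})\big]_{d+1,d+1} = \prod_{i=1}^{m}\langle x_{2i-1},x_{2i}\rangle.
\]
Extracting the $(d+1,d+1)$ component of the series above, only the even terms $n=2m$ survive, the scalar $z^{2m}$ factors out, and one reads off exactly (\ref{series-1}); the interchange of summation and entry extraction is licensed by the absolute convergence already established.

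For the piecewise linear case I would argue interval by interval. On $(t_{i-1},t_i)$ the driving coefficient $z F(\gamma'(u)) = z F(v_i)$ is constant, so on that interval the solution of (\ref{lin}) is the matrix exponential $\exp(z\Delta_i F(v_i))$, and the multiplicative property (\ref{eq_multiplicative}) assembles $\Gamma^{z\gamma}(b)$ as the ordered product $\exp(z\Delta_n F(v_n))\cdots\exp(z\Delta_1 F(v_1))$, giving (\ref{eq-piecewise-solution}). It then remains to evaluate a single exponential. Writing $F(v) = |v|M$ with $M = F(\tilde v)$, $\tilde v = v/|v|$, and $\theta = z|v|\Delta$, the unit-norm condition gives $M^2 = \left(\begin{smallmatrix}\tilde v\tilde v^{T} & 0\\ 0 & 1\end{smallmatrix}\right)$ and hence the key identity $M^3 = M$. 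Consequently $M^{2k+1} = M$ and $M^{2k} = M^2$ for $k\geq 1$, so the exponential series splits into its odd part $\sinh\theta$ and even part $\cosh\theta - 1$, yielding
\[
\exp(\theta M) = I + \sinh(\theta)\,M + (\cosh(\theta) - 1)\,M^2,
\]
which is precisely $A(v,\Delta,z)$ in (\ref{eq-piecewise-onepiece}).

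The genuinely substantive points are the two algebraic identities — the block-diagonalisation of $F(x)F(y)$ and the relation $M^3 = M$ — and I expect the main obstacle to be organisational rather than deep: keeping careful track of the symmetric bilinear (as opposed to Hermitian) pairing under complexification, so that the factor $z^{2m}$ emerges cleanly and the inner products $\langle d\gamma_{t_{2i-1}}, d\gamma_{t_{2i}}\rangle$ in (\ref{series-1}) are exactly those of the original real path. The only analytic care needed is in justifying the term-by-term extraction of the $(d+1,d+1)$ entry, which the factorial decay of the iterated integrals supplies.
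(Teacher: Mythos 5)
Your proposal is correct and follows essentially the same route as the paper: linearity of the ODE yields uniqueness together with the iterated-integral series (the rescaled analogue of (\ref{eq_Gamma_sig})), the block structure of products of $F$'s (equivalently, the identity behind (\ref{eq_cosh_series})) gives the $(d+1,d+1)$ entry, and the identity $M^{3}=M$ combined with the multiplicative property (\ref{eq_multiplicative}) gives the piecewise-linear product formula. You spell out the even/odd block bookkeeping and the Picard justification more explicitly than the paper does, but the substance is identical.
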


\begin{proof}
Since the ODE (\ref{lin}) is linear, there is a unique solution $\Gamma^{z\gamma}(t)$
which can be represented by equation (\ref{eq_Gamma_sig}) by replacing
$\gamma$ with $z\gamma$. Then equation (\ref{series-1}) can be
obtained by taking the last entry of this equation. 

To obtain the explicit solution in the case where $\gamma$ is piecewise linear path, we first assume $\gamma'=v$ 
on $[s,t]$. Then by using the observation that $M^3=M$ together with equation (\ref{eq_Gamma_sig}), we have 
\[
\begin{aligned}\Gamma_{s,t}^{z\gamma}(t) & =I+\sum_{n=1}^{\infty}\frac{(z|v|)^{2n-1}(t-s)^{2n-1}}{(2n-1)!}M+\sum_{n=1}^{\infty}\frac{(z|v|)^{2n}(t-s)^{2n}}{(2n)!}M^{2}\\
 & =I+\sinh\left(z\left\vert v\right\vert (t-s)\right)M+\left(\cosh\left(z\left\vert v\right\vert (t-s)\right)-1\right)M^{2}.
\end{aligned}
\]
In the general case, the multiplicative property (\ref{eq_multiplicative}) together with simple induction argument implies that the solution has the form (\ref{eq-piecewise-solution}). 
\end{proof}

\subsection{Signature Kernels and Hyperbolic Development}

We begin this subsection by giving a closed form of the $\phi$-signature
kernel $K_{\phi}^{\mathcal{W},\mu}\left(s,t\right)$ for the special case
$\phi(k)=\left(\frac{k}{2}\right)!$ based on the theory presented above. 
\begin{thm}
\label{hyperbolic}\textbf{(Formula for $\left\langle \mathbb{E}\left[S\left(\circ B\right)\right],S\left(\gamma\right)\right\rangle _{\phi}$)}
Let $\phi:%TCIMACRO{\U{2115}}%
%BeginExpansion
\mathbb{N}%EndExpansion
\cup\left\{ 0\right\} \rightarrow%TCIMACRO{\U{211d}}%
%BeginExpansion
\mathbb{R}%EndExpansion
_{+}$ be defined by $\phi(k)=\left(\frac{k}{2}\right)!$ for $k\in \mathbb{N}\cup\{0\}$. Suppose that
$B$ is a $d$-dimensional Brownian motion, then the expected Stratonovich
signature, $\mathbb{E}\left[S\left(\circ B\right)_{0,s}\right]$,
belongs to $T_{\phi}\left(V\right)$ for any $0\leq s<\infty.$ Furthermore
if $\gamma:[0,1]\to V$ is any continuous path of bounded variation
it holds that
\begin{equation}
K_{\phi}^{\mathcal{W},\gamma}\left(s,t\right):=\left\langle \mathbb{E}\left[S\left(\circ B\right)_{0,s}\right],S\left(\gamma\right)_{0,t}\right\rangle _{\phi}=\cosh\left(\rho_{\sqrt{s/2}\gamma}\left(t\right)\right).\label{identity}
\end{equation}
In this notation $\rho_{\lambda}\left(t\right):=\operatorname{d_{\mathbb{H}^{d}}}\left(o,\sigma_{\lambda\gamma}\left(t\right)\right)$
is the distance between the hyperbolic development $\sigma_{\lambda\gamma}(t)$
of the path $\lambda\gamma\left(\cdot\right)$ from $T_{o}\mathbb{H}^{d}$
onto the $d$-dimensional hyperbolic space $\mathbb{H}^{d}$ started at the
base point $o\in\mathbb{H}^{d}$, and $\operatorname{d_{\mathbb{H}^{d}}}:\mathbb{H}^{d}\times\mathbb{H}^{d}\rightarrow\lbrack0,\infty)$
is the Riemannian distance on $\mathbb{H}^{d}$. 
\end{thm}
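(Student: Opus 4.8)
The plan is to combine Fawcett's closed form for the expected Stratonovich signature with the series representation (\ref{eq_cosh_series}) of $\cosh\rho_{\gamma}$, exploiting the fact that the weight $\phi(k)=(k/2)!$ is tuned precisely to cancel the factorials in the exponential. I would start from Fawcett's formula, which asserts that for an orthonormal basis $\{e_{i}\}$ of $V$,
\[
\mathbb{E}\left[S\left(\circ B\right)_{0,s}\right]=\exp\left(\frac{s}{2}\sum_{i=1}^{d}e_{i}\otimes e_{i}\right)=\sum_{n=0}^{\infty}\frac{1}{n!}\left(\frac{s}{2}\right)^{n}Q^{\otimes n},\qquad Q:=\sum_{i=1}^{d}e_{i}\otimes e_{i}\in V^{\otimes2}.
\]
In particular only the even-degree components survive, with the degree-$2n$ component equal to $\frac{1}{n!}(s/2)^{n}Q^{\otimes n}$.

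The first step is to settle membership in $T_{\phi}\left(V\right)$. Since the Hilbert--Schmidt norm is multiplicative over tensor products, $\left\Vert Q^{\otimes n}\right\Vert _{2n}^{2}=\left\Vert Q\right\Vert _{2}^{2n}=d^{n}$, and with $\phi(2n)=\Gamma(n+1)=n!$ I would compute
\[
\left\Vert \mathbb{E}\left[S\left(\circ B\right)_{0,s}\right]\right\Vert _{\phi}^{2}=\sum_{n=0}^{\infty}\phi(2n)\frac{1}{(n!)^{2}}\left(\frac{s}{2}\right)^{2n}d^{n}=\sum_{n=0}^{\infty}\frac{1}{n!}\left(\frac{s^{2}d}{4}\right)^{n}=\exp\left(\frac{s^{2}d}{4}\right)<\infty,
\]
which proves the membership assertion for every $0\leq s<\infty$. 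Because $\phi$ satisfies Condition \ref{sum}, Lemma \ref{lem-S-in-Tphi} gives $S\left(\gamma\right)_{0,t}\in T_{\phi}\left(V\right)$ as well, so the pairing converges absolutely by Cauchy--Schwarz and the term-by-term manipulations below are licensed.

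Next I would unwind the $\phi$-inner product. Pairing against the purely-even expected signature kills all odd degrees, and the cancellation $\phi(2n)\cdot\frac{1}{n!}=1$ leaves
\[
K_{\phi}^{\mathcal{W},\gamma}(s,t)=\sum_{n=0}^{\infty}\left(\frac{s}{2}\right)^{n}\left\langle Q^{\otimes n},S\left(\gamma\right)_{0,t}^{2n}\right\rangle _{2n}.
\]
The crux is the contraction identity
\[
\left\langle Q^{\otimes n},S\left(\gamma\right)_{0,t}^{2n}\right\rangle _{2n}=\int_{0<u_{1}<\cdots<u_{2n}<t}\left\langle d\gamma_{u_{1}},d\gamma_{u_{2}}\right\rangle \cdots\left\langle d\gamma_{u_{2n-1}},d\gamma_{u_{2n}}\right\rangle ,
\]
which I would establish by writing $Q^{\otimes n}=\sum_{i_{1},\dots,i_{n}}e_{i_{1}}\otimes e_{i_{1}}\otimes\cdots\otimes e_{i_{n}}\otimes e_{i_{n}}$, reading off the corresponding coefficients of $S\left(\gamma\right)_{0,t}^{2n}$ in the dual basis, and noting that summing each repeated index collapses $\sum_{i}d\gamma_{u_{2j-1}}^{i}d\gamma_{u_{2j}}^{i}$ into the $V$-inner product $\left\langle d\gamma_{u_{2j-1}},d\gamma_{u_{2j}}\right\rangle $.

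Finally I would absorb the scalar via homogeneity: since $S\left(\lambda\gamma\right)_{0,t}^{2n}=\lambda^{2n}S\left(\gamma\right)_{0,t}^{2n}$, the choice $\lambda=\sqrt{s/2}$ rewrites each summand as $\left\langle Q^{\otimes n},S(\sqrt{s/2}\,\gamma)_{0,t}^{2n}\right\rangle _{2n}$, which by the contraction identity applied to the rescaled path is exactly the $n$-th term of the series (\ref{eq_cosh_series}) for $\cosh\rho_{\sqrt{s/2}\,\gamma}(t)$; summing over $n$ yields (\ref{identity}). I expect the main obstacle to be the index bookkeeping in the contraction identity---verifying that the Hilbert--Schmidt pairing of $Q^{\otimes n}$ against the iterated-integral tensor reproduces precisely the alternating pattern of $V$-inner products appearing in the hyperbolic development series---while Fawcett's formula is taken as known and the convergence justifications are routine given the norm bound above.
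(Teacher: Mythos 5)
Your proposal is correct and follows essentially the same route as the paper: Fawcett's formula for $\mathbb{E}\left[S\left(\circ B\right)_{0,s}\right]$, the norm computation $\left\Vert \mathbb{E}\left[S\left(\circ B\right)_{0,s}\right]\right\Vert _{\phi}^{2}=e^{s^{2}d/4}$, and term-by-term pairing to recover the series (\ref{eq_cosh_series}) for $\cosh\left(\rho_{\sqrt{s/2}\gamma}\left(t\right)\right)$. The only difference is presentational: you spell out the contraction identity $\left\langle Q^{\otimes n},S\left(\gamma\right)_{0,t}^{2n}\right\rangle _{2n}=\int_{0<u_{1}<\cdots<u_{2n}<t}\left\langle d\gamma_{u_{1}},d\gamma_{u_{2}}\right\rangle \cdots\left\langle d\gamma_{u_{2n-1}},d\gamma_{u_{2n}}\right\rangle$ explicitly, which the paper leaves implicit.
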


\begin{proof}
For the first assertion recall that (see e.g. Proposition 4.10. in
\cite{LV-2004})
\[
\mathbb{E}\left[S\left(\circ B\right)_{0,s}\right]=\exp\left(\frac{s}{2}\sum_{i=1}^{d}e_{i}^{2}\right)=\sum_{k=0}^{\infty}\frac{s^{k}}{2^{k}k!}\sum_{i_{1},...,i_{k}=1}^{d}e_{i_{1}}^{2}...e_{i_{k}}^{2}
\]
so that 
\[
\left\vert \left\vert \mathbb{E}\left[S\left(\circ B\right)_{0,s}\right]\right\vert \right\vert _{\phi}^{2}=\sum_{k=0}^{\infty}k!\frac{s^{2k}d^{k}}{2^{2k}(k!)^{2}}=e^{s^{2}d/4}<\infty.
\]
For the second assertion we have that 
\[
\left\langle \mathbb{E}\left[S\left(\circ B\right)_{0,s}\right],S\left(\gamma\right)_{0,t}\right\rangle _{\phi}=\sum_{k=0}^{\infty}\frac{s^{k}}{2^{k}}\int_{0<t_{1}<...<t_{2k}<t}\left\langle d\gamma_{t_{1}},d\gamma_{t_{2}}\right\rangle ...\left\langle d\gamma_{t_{2k-1}},d\gamma_{t_{2k}}\right\rangle .
\]
The right hand side of this expression equals that of (\ref{identity});
see formula (\ref{eq_cosh_series}). 
\end{proof}
In the following, we give some remarks on the computation of this
basic signature kernel based on the above theorem. 
\begin{rem}
(1) In contrast to the earlier case of two paths, we need only solve an ODE to calculate $\left\langle \mathbb{E}\left[S\left(\circ B\right)\right],S\left(\gamma\right)\right\rangle _{\phi}$
and not a PDE. (2) For general $\gamma$,
the ODE is known, and is determined by the linear vector fields in equation
(\ref{eq_Gamma_ode}). Any ODE solver such as Runge-Kutta could in
principle be used to obtain numerical solutions. (3) For piecewise linear case, the exact solution
is given in equation (\ref{eq-piecewise-solution}) as a product of
matrices. 
\end{rem}

\subsection{The Original Kernel for Expected Signatures}

Theorem \ref{hyperbolic} gives a closed form expression for the $\phi$-signature
kernel of Stratonovich expected signature of Brownian motion and the
signature of a bounded variation continuous path where $\phi(k)=\left(\frac{k}{2}\right)!$.
As previously we will be interested in related formulae for different signature kernels.
We can obtain these formulae by using an extension of the ideas developed earlier
in the paper. In the case of the original signature kernel (i.e. $\phi\equiv1$),
we can make use of the classical integral representation of the reciprocal
gamma function which for integers has the form:
\begin{equation}
\frac{1}{k!}=\frac{1}{2\pi i}\oint_{C}z^{-(k+1)}e^{z}dz=\frac{1}{2\pi}\int_{-\pi}^{\pi}e^{-ik\theta}e^{e^{i\theta}}d\theta \label{eq-rf}
\end{equation}
where $\oint_{C}$ denotes the contour integral around the unit circle traversed once anticlockwise. This is an instance of the more general formula
\begin{equation}
\frac{1}{\Gamma(p)}=\frac{1}{2\pi i}\oint_{H}z^{-p}e^{z}dz, \label{eq-rf2}
\end{equation}
where $H$ is Hankel contour which winds from $-\infty-0i$
in the lower half-plane, anticlockwise around 0, and then back to $-\infty+0i$ in the
upper half-plane, while respecting the branch cut of the integrand along the negative real axis. The advantage of using these integral representation is twofold. 
First, the integrand has exponential dependence on $k$ making it suitable to employ the 
techniques developed earlier in the paper. Second the underlying numerical integration theory is well developed and the convergence rates for optimised quadrature formulae are
exceedingly fast. We give some examples below but refer the reader to the reference
\cite{TWS-2006} for further details. 
We have the following theorem. 
\begin{thm}
\label{thm-kernel-BM-original}Let $\phi\equiv1$. Suppose $B$ is
a $d$-dimensional Brownian motion, then the expected Stratonovich
signature, $\mathbb{E}\left[S\left(\circ B\right)_{0,s}\right]$,
belongs to $T_{\phi}\left(V\right)$ for any $0\leq s<\infty$ and
\begin{equation}
\left\vert \left\vert \mathbb{E}\left[S\left(\circ B\right)_{0,s}\right]\right\vert \right\vert _{\phi}^{2}=\frac{1}{2\pi i}\oint_{C}z^{-1}e^{z+s^{2}d/(4z)}dz\label{eq-norm-BM-original}
\end{equation}
where the contour $C$ is the unit circle in $\mathbb{C}$ traversed anticlockwise. Furthermore if
$\gamma$ is any continuous path of bounded variation it holds that
\begin{equation}
K_{\phi}^{\mathcal{W},\gamma}\left(s,t\right):=\left\langle \mathbb{E}\left[S\left(\circ B\right)_{0,s}\right],S\left(\gamma\right)_{0,t}\right\rangle _{\phi}=\frac{1}{2\pi i}\oint_{C}z^{-1}e^{z}\Gamma_{d+1,d+1}^{c_{s}(z)\gamma}(t)dz\label{eq-kernel-BM-original}
\end{equation}
where $c_{s}(z)=\sqrt{s/2z}\in\mathbb{C}$ and $\Gamma_{d+1,d+1}^{c_{s}(z)\gamma}(t)$
is defined by the series (\ref{series-1}), i.e. the last entry of the solution
to ODE (\ref{lin}).
\end{thm}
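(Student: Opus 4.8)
The plan is to obtain, in each case, an explicit power series in $k$ and then recognise every reciprocal factorial through the contour representation (\ref{eq-rf}). The common starting point is Fawcett's formula, recalled in the proof of Theorem \ref{hyperbolic}, namely $\mathbb{E}[S(\circ B)_{0,s}]=\sum_{k=0}^\infty \frac{s^k}{2^k k!}\sum_{i_1,\dots,i_k=1}^d e_{i_1}^2\cdots e_{i_k}^2$. Since all nonzero homogeneous components sit in even degrees and $\phi\equiv 1$, both the squared norm and the pairing against $S(\gamma)_{0,t}$ collapse to sums over $k$ of the degree-$2k$ contributions; well-definedness of the pairing is immediate from Lemma \ref{lem-S-in-Tphi}, as the constant $\phi$ trivially satisfies Condition \ref{sum}.

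First I would treat the norm (\ref{eq-norm-BM-original}). The $d^k$ basis tensors $e_{i_1}\otimes e_{i_1}\otimes\cdots\otimes e_{i_k}\otimes e_{i_k}$ are orthonormal in $V^{\otimes 2k}$, so the degree-$2k$ component has Hilbert--Schmidt norm squared $d^k s^{2k}/(2^{2k}(k!)^2)$, and summing yields
\[
\left\Vert \mathbb{E}\left[S\left(\circ B\right)_{0,s}\right]\right\Vert_\phi^2 = \sum_{k=0}^\infty \frac{(s^2 d/4)^k}{(k!)^2},
\]
which is finite (it equals $I_0(s\sqrt d)$) and thereby settles the membership claim. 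I would then rewrite a single factor $1/k!$ via $\frac{1}{2\pi i}\oint_C z^{-(k+1)}e^z\,dz$ from (\ref{eq-rf}), interchange sum and integral, and collapse the remaining exponential series $\sum_k (s^2 d/(4z))^k/k!=e^{s^2 d/(4z)}$ to reach (\ref{eq-norm-BM-original}).

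For the kernel identity (\ref{eq-kernel-BM-original}) I would repeat the contraction carried out in the proof of Theorem \ref{hyperbolic}, changing only the weight: pairing Fawcett's series against $S(\gamma)_{0,t}$ and contracting the repeated indices with the inner product on $V$ gives
\[
\left\langle \mathbb{E}\left[S\left(\circ B\right)_{0,s}\right],S\left(\gamma\right)_{0,t}\right\rangle_\phi = \sum_{k=0}^\infty \frac{1}{k!}\left(\frac{s}{2}\right)^k J_k(t),
\]
where $J_k(t):=\int_{0<t_1<\cdots<t_{2k}<t}\langle d\gamma_{t_1},d\gamma_{t_2}\rangle\cdots\langle d\gamma_{t_{2k-1}},d\gamma_{t_{2k}}\rangle$ is exactly the coefficient of $z^{2k}$ in the series (\ref{series-1}). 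Applying (\ref{eq-rf}) to $1/k!$ and interchanging the sum with the integral converts the inner sum into $\sum_k (s/(2z))^k J_k(t)=\sum_k (c_s(z))^{2k}J_k(t)=\Gamma_{d+1,d+1}^{c_s(z)\gamma}(t)$ with $c_s(z)=\sqrt{s/2z}$, which is (\ref{eq-kernel-BM-original}).

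The hard part will be justifying the interchange of summation and contour integration, and here the choice of $C$ as the unit circle is what makes it work. The classical factorial estimate for iterated integrals, combined with Cauchy--Schwarz on $V$, gives $|J_k(t)|\le L_t(\gamma)^{2k}/(2k)!$; hence on $|z|=1$ the integrand is dominated termwise by $(s/2)^k L_t(\gamma)^{2k}/(2k)!$, a summable sequence independent of $z$, so the series converges uniformly on $C$ and Fubini's theorem applies. The same argument, with $J_k(t)$ replaced by $(s^2 d/4)^k/k!$, licenses the interchange in the norm computation.
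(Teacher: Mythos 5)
Your proposal is correct and follows essentially the same route as the paper: expand via Fawcett's formula, insert the contour representation $1/k!=\frac{1}{2\pi i}\oint_C z^{-(k+1)}e^z\,dz$, interchange sum and integral, and recognise the inner series as $e^{s^2d/(4z)}$ (for the norm) and as $\Gamma_{d+1,d+1}^{c_s(z)\gamma}(t)$ via (\ref{series-1}) (for the kernel). Your domination argument on $|z|=1$ using $|J_k(t)|\le L_t(\gamma)^{2k}/(2k)!$ is in fact a more explicit justification of the interchange than the paper's brief appeal to dominated convergence.
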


\begin{proof}
Using the definition of the original signature kernel and the dominated convergence theorem to interchange the order of $\sum$ and $\oint_{C}$ we have
\[
\begin{aligned}\left\langle \mathbb{E}\left[S\left(\circ B\right)_{0,s}\right],S\left(\gamma\right)_{0,t}\right\rangle _{\phi} & =\sum_{k=0}^{\infty}\frac{1}{k!}\frac{s^{k}}{2^{k}}\int_{0<t_{1}<...<t_{2k}<t}\left\langle d\gamma_{t_{1}},d\gamma_{t_{2}}\right\rangle ...\left\langle d\gamma_{t_{2k-1}},d\gamma_{t_{2k}}\right\rangle \\
 & =\frac{1}{2\pi i}\oint_{C}z^{-1}e^{z}\left(\sum_{k=0}^{\infty}z^{-k}\frac{s^{k}}{2^{k}}\int_{0<t_{1}<...<t_{2k}<t}\left\langle d\gamma_{t_{1}},d\gamma_{t_{2}}\right\rangle ...\left\langle d\gamma_{t_{2k-1}},d\gamma_{t_{2k}}\right\rangle \right)dz.
\end{aligned}
\]
If $c_{s}(z)=\sqrt{s/2z}$ then by equation (\ref{series-1}),
we know that 
\[
\sum_{k=0}^{\infty}z^{-k}\frac{s^{k}}{2^{k}}\int_{0<t_{1}<...<t_{2k}<t}\left\langle d\gamma_{t_{1}},d\gamma_{t_{2}}\right\rangle ...\left\langle d\gamma_{t_{2k-1}},d\gamma_{t_{2k}}\right\rangle =\Gamma_{d+1,d+1}^{c_{s}(z)\gamma}(t),
\]
which is the last entry of the solution $\Gamma^{c_{s,z}\gamma}(t)$
to ODE (\ref{lin}). The argument for the squared norm
of Brownian motion, follows a similar pattern and yields
\[
\left\vert \left\vert \mathbb{E}\left[S\left(\circ B\right)_{0,s}\right]\right\vert \right\vert _{\phi}^{2}=\frac{1}{2\pi i}\oint_{C}z^{-1}e^{z}\left(\sum_{k=0}^{\infty}z^{-k}\frac{s^{2k}d^{k}}{2^{2k}k!}\right)dz=\frac{1}{2\pi i}\oint_{C}z^{-1}e^{z}e^{s^{2}d/(4z)}dz.
\]

\end{proof}

\subsubsection*{Computation of the contour integrals}

The implementation of the formula above demands an efficient way to approximate contour integrals of the form 
\begin{equation}
I=\frac{1}{2\pi i}\oint_{C}e^{z}f(z)dz=\frac{1}{2\pi}\int_{-\pi}^{\pi}e^{e^{i\theta}}f(e^{i\theta})e^{i\theta} d\theta.
\end{equation}
A natural approach is to apply a trapezoidal rule based on $N$ equally spaced points on the unit circle, i.e. to approximate $I$ using 
\begin{equation}
I_{N}= \frac{1}{N} \sum_{k=1}^{N} e^{z_{k}}f(z_{k})z_{k}
\end{equation}
where $z_{k}=e^{2k\pi i/N}$. 
Several other methods have been proposed in Trefethen, Weideman
and Schmelzer (2006) \cite{TWS-2006} for the efficient approximation of
the Hankel-type contour integrals of the form
\[
I=\frac{1}{2\pi i}\oint_{H}e^{z}f(z)dz.
\]
The idea is to seek an optimal selection of contour according to the number of points in the quadrature formula. Letting $\varphi(\theta)$ be an analytic function that maps the real
line $\mathbb{R}$ onto the contour $H$. Then the approach is to approximate
\[
I=\frac{1}{2\pi i}\int_{-\infty}^{+\infty}e^{\varphi(\theta)}f(\varphi(\theta))\varphi'(\theta)d\theta
\]
by 
\begin{equation}
I_{N}=-iN^{-1}\sum_{k=1}^{N}e^{z_{k}}f(z_{k})w_{k}=-\sum_{k=1}^{N}c_{k}f(z_{k})\label{eq-contour-int-appr}
\end{equation}
on the finite interval $[-\pi,\pi]$ with $N$ points which are regularly
spaced on the interval and $z_{k}=\varphi(\theta_{k})$, $w_{k}=\varphi'(\theta_{k})$
and $c_{k}=iN^{-1}e^{z_{k}}w_{k}$. The convergence rates for these
optimised quadrature formulae are very fast, of order $O(3^{-N})$.
Three classes of contours have been investigated in \cite{TWS-2006}:
\begin{itemize}
\item Parabolic contours
\[
\varphi(\theta)=N(0.1309-0.1194\theta^{2}+0.2500i\theta)
\]
\item Hyperbolic contours
\[
\varphi(\theta)=2.246N(1-\sin(1.1721-0.3443i\theta))
\]
\item Cotangent contours
\[
\varphi(\theta)=N(0.5017\theta\cot(0.6407\theta)-0.6122+0.2645i\theta)
\]
\end{itemize}
Note in each case the dependence of the family on $N$.

The procedure for computing the kernel in equation (\ref{eq-kernel-BM-original})
is first compute the function $\Gamma_{d+1,d+1}^{c_{s}(z)\gamma}(t)$
by utilising the explicit formula (\ref{eq-piecewise-solution}) for
piecewise linear paths. By taking 
\[
f(z)=z^{-1}\Gamma_{d+1,d+1}^{c_{s}(z)\gamma}(t)
\]
we can approximate the contour integral by one of the approaches described above.

\subsection{Expected signatures for general kernels}

The representation of the previous subsection can be combined with
the ideas of Section \ref{sec-Signature-Kernels} to obtain similar
representations for $\left\langle \mathbb{E}\left[S\left(\circ B\right)\right],S\left(\gamma\right)\right\rangle _{\phi}$
for general $\phi$ satisfying the conditions of Theorem \ref{general prop}.
The expression is as follows. 

\begin{thm}
Let $\mu$ be a finite signed Borel measure $\mu$ on $\mathbb{R}$.
Suppose that $\phi:\mathbb{N}\cup\{0\}\to\mathbb{C}$ is such that
\[
\phi\left(k\right)=\int_{G}r\left(k,\widetilde{z}\right)\mu\left(d\widetilde{z}\right)\in%TCIMACRO{\U{2102}}%
%BeginExpansion
\mathbb{C}%EndExpansion
\,\text{, for all }k\in\mathbb{N}\cup\{0\}
\]
where $r\left(k,\cdot\right)$ is assumed to have the form $r\left(k,\widetilde{z}\right)=$
$g\left(\widetilde{z}\right)^{\alpha k}$ $\in%TCIMACRO{\U{2102}}%
%BeginExpansion
\mathbb{C}%EndExpansion
$ for $\alpha\in%TCIMACRO{\U{211d}}%
%BeginExpansion
\mathbb{R}%EndExpansion
$ and some function $g:\mathbb{C}\rightarrow%TCIMACRO{\U{2102}}%
%BeginExpansion
\mathbb{C}%EndExpansion
.$ We assume that $\phi$ satisfies the conditions in Theorem \ref{general prop},
and that $B$ a $d$-dimensional Brownian motion. Then the expected Stratonovich
signature, $\mathbb{E}\left[S\left(\circ B\right)_{0,s}\right]$,
belongs to $T_{|\phi|}\left(V\right)$ for any $0\leq s<\infty$ and
\begin{equation}
\left\vert \left\vert \mathbb{E}\left[S\left(\circ B\right)_{0,s}\right]\right\vert \right\vert _{\phi}^{2}=\frac{1}{2\pi i}\oint_{C}\int_{G}\left[z^{-1}e^{z}\exp\left(\frac{g(\widetilde{z})^{2\alpha}s^{2}d}{4z}\right)\right]\mu(d\widetilde{z})dz
\end{equation}
where $C$ is unit circle in $\mathbb{C}$ traversed anticlockwise. Furthermore if
$\gamma$ is any continuous path of bounded variation it holds that
\begin{equation}
K_{\phi}^{\mathcal{W},\gamma}\left(s,t\right):=\left\langle \mathbb{E}\left[S\left(\circ B\right)_{0,s}\right],S\left(\gamma\right)_{0,t}\right\rangle _{\phi}=\frac{1}{2\pi i}\oint_{C}\int_{G}\left[z^{-1}e^{z}\Gamma_{d+1,d+1}^{c_{g,\alpha,s}(\widetilde{z},z)\gamma}(t)\right]\mu(d\widetilde{z})dz\label{eq-bilinear-exp}
\end{equation}
where $c_{g,\alpha,s}(\widetilde{z},z):=g(\widetilde{z})^{\alpha}\sqrt{s/(2z)}\in\mathbb{C}$
and $\Gamma_{d+1,d+1}^{c_{g,\alpha,s}(\widetilde{z},z)\gamma}(t)$
is the series (\ref{series-1}), i.e. the last entry of the solution
to ODE (\ref{lin}).
\end{thm}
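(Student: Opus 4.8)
The plan is to fuse the degree-by-degree computation behind Theorem~\ref{hyperbolic} with the two integral representations already used in this section: the integral-transform representation $\phi(2k)=\int_G g(\widetilde z)^{2\alpha k}\,\mu(d\widetilde z)$ supplied by Theorem~\ref{general prop}, and the reciprocal-factorial contour representation $\tfrac{1}{k!}=\tfrac{1}{2\pi i}\oint_C z^{-(k+1)}e^z\,dz$ used in Theorem~\ref{thm-kernel-BM-original}.

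First I would reproduce the opening step of the proof of Theorem~\ref{hyperbolic}. Since $\mathbb E[S(\circ B)_{0,s}]=\exp\!\bigl(\tfrac{s}{2}\sum_{i=1}^d e_i^2\bigr)$ has only even-degree components, pairing its degree-$2k$ term against $S(\gamma)^{2k}_{0,t}$ contracts the repeated indices into the scalar iterated integral appearing in (\ref{series-1}); this yields the single series
\[
\bigl\langle \mathbb E[S(\circ B)_{0,s}],S(\gamma)_{0,t}\bigr\rangle_\phi=\sum_{k=0}^\infty \phi(2k)\,\frac{s^k}{2^k k!}\int_{0<t_1<\cdots<t_{2k}<t}\langle d\gamma_{t_1},d\gamma_{t_2}\rangle\cdots\langle d\gamma_{t_{2k-1}},d\gamma_{t_{2k}}\rangle,
\]
and, pairing $\mathbb E[S(\circ B)_{0,s}]$ with itself, the diagonal contraction produces a factor $d^k$ and hence the squared-norm series $\sum_k \phi(2k)\,s^{2k}d^k/(2^{2k}(k!)^2)$.

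Next I would substitute both representations. Writing $\phi(2k)=\int_G g(\widetilde z)^{2\alpha k}\mu(d\widetilde z)$ and replacing one factor $\tfrac1{k!}$ by the unit-circle contour integral turns each series into a triple summation--integration over $k$, $C$ and $G$. The key algebraic observation is that collecting $g(\widetilde z)^{2\alpha k}z^{-k}(s/2)^k$ yields exactly $w^{2k}$ with $w=c_{g,\alpha,s}(\widetilde z,z)=g(\widetilde z)^\alpha\sqrt{s/(2z)}$; hence, after interchanging $\sum_k$, $\oint_C$ and $\int_G$, the inner sum is recognised via (\ref{series-1}) in Lemma~\ref{lem-complex-hd} as $\Gamma^{c_{g,\alpha,s}(\widetilde z,z)\gamma}_{d+1,d+1}(t)$, giving (\ref{eq-bilinear-exp}). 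For the norm the second factor $\tfrac1{k!}$ is kept intact, so the inner sum collapses to the exponential series $\exp\!\bigl(g(\widetilde z)^{2\alpha}s^2 d/(4z)\bigr)$, producing the stated norm formula.

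The main obstacle is justifying the simultaneous interchange of the summation with the two integrals. On the unit circle one has $|z^{-(k+1)}|=1$ and $|e^z|\le e$, while the iterated integral is bounded by $L_t(\gamma)^{2k}/(2k)!$, so an integrable majorant exists provided $\sum_k \tfrac{s^k}{2^k}\tfrac{L_t(\gamma)^{2k}}{(2k)!}\int_G|g(\widetilde z)|^{2\alpha k}|\mu|(d\widetilde z)$ converges; this is precisely what hypotheses~(1) and~(2) of Theorem~\ref{general prop} guarantee when read with the even-degree weights $r(2k,\cdot)$, the delicate point being that $g(\widetilde z)^\alpha$ may be unbounded on $G$ and must be controlled against $|\mu|$. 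The same estimate shows the squared $|\phi|$-norm of $\mathbb E[S(\circ B)_{0,s}]$ is finite, so that $\mathbb E[S(\circ B)_{0,s}]\in T_{|\phi|}(V)$ and every pairing is well defined before the interchange is carried out.
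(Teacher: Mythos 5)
Your proposal is correct and follows essentially the same route as the paper, whose proof is simply the instruction to repeat the argument of Theorem \ref{thm-kernel-BM-original} (expand the pairing into the even-degree series with coefficients $\phi(2k)\,s^k/(2^k k!)$, insert the contour representation of $1/k!$ and the integral representation of $\phi(2k)$, interchange, and recognise the inner sum via (\ref{series-1})) with the obvious modifications. You have merely written out those modifications explicitly, including the correct identification of the rescaling $c_{g,\alpha,s}(\widetilde z,z)$ and the domination argument justifying the interchange.
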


\begin{proof}
The conditions for $\phi$ in Theorem \ref{general prop} and by now standards estimates allow for the steps of the proof of Theorem \ref{thm-kernel-BM-original} to be repeated making the obvious modifications.
\end{proof}
As a special case, if $\phi$ is the moments of a
random variable $\pi$, i.e. 
\begin{equation}
\phi(k)=\mathbb{E}[\pi^{k}],\ \forall k\geq0,\label{eq-moments-rv}
\end{equation}
the representations are as follows.
\begin{cor}
\label{cor-kernel-BM-rv} Let the function $\phi:%TCIMACRO{\U{2115}}%
%BeginExpansion
\mathbb{N}%EndExpansion
\cup\left\{ 0\right\} \rightarrow%TCIMACRO{\U{211d}}%
%BeginExpansion
\mathbb{R}%EndExpansion
$ as defined in (\ref{eq-moments-rv}) and $\psi(k)=\mathbb{E}[|\pi|^{k}]$
such that $\psi$ satisfies Condition \ref{sum}. Suppose $B$ is
a $d$-dimensional Brownian motion, then the expected Stratonovich
signature, $\mathbb{E}\left[S\left(\circ B\right)_{0,s}\right]$,
belongs to $T_{|\phi|}\left(V\right)$ for any $0\leq s<\infty$ and
\begin{equation}
\left\vert \left\vert \mathbb{E}\left[S\left(\circ B\right)_{0,s}\right]\right\vert \right\vert _{\phi}^{2}=\frac{1}{2\pi i}\oint_{C}z^{-1}e^{z}\mathbb{E}_{\pi}\left[e^{(\pi s)^{2}d/(4z)}\right]dz.
\end{equation}
If $\gamma$ is
any continuous path of bounded variation it holds that
\begin{equation}
K_{\phi}^{\mathcal{W},\gamma}\left(s,t\right)\left(s,t\right):=\left\langle \mathbb{E}\left[S\left(\circ B\right)_{0,s}\right],S\left(\gamma\right)_{0,t}\right\rangle _{\phi}=\frac{1}{2\pi i}\oint_{C}z^{-1}e^{z}\mathbb{E}_{\pi}\left[\Gamma_{d+1,d+1}^{c_{s}(\pi,z)\gamma}(t)\right]dz\label{eq-kernel-BM-rv}
\end{equation}
where $c_{s}(x,z):=x\sqrt{s/(2z)}\in\mathbb{C}$ and $\Gamma_{d+1,d+1}^{c_{s}(x,z)\gamma}(t)$
is the series (\ref{series-1}).
\end{cor}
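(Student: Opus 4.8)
The plan is to read this off as the specialisation of the preceding theorem to the case where the transform is a plain moment map. Concretely, take $G=\mathbb{R}$, let $\mu=dF$ be the law of $\pi$ (with $F$ its distribution function), and set $g(\widetilde{z})=\widetilde{z}$ and $\alpha=1$, so that $r(k,\widetilde{z})=\widetilde{z}^{k}$ and
\[
\phi(k)=\int_{\mathbb{R}}\widetilde{z}^{k}\,\mu(d\widetilde{z})=\mathbb{E}[\pi^{k}],
\]
in agreement with (\ref{eq-moments-rv}). Under this choice the rescaling constant of the theorem becomes $c_{g,\alpha,s}(\widetilde{z},z)=g(\widetilde{z})^{\alpha}\sqrt{s/(2z)}=\widetilde{z}\sqrt{s/(2z)}=c_{s}(\widetilde{z},z)$, and the inner integral $\int_{G}[\,\cdots]\mu(d\widetilde{z})$ collapses to the expectation $\mathbb{E}_{\pi}[\,\cdots]$. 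Substituting these identifications into the two displayed formulas of the preceding theorem produces exactly the squared-norm identity and the kernel identity (\ref{eq-kernel-BM-rv}) of the corollary, with $g(\widetilde{z})^{2\alpha}=\widetilde{z}^{2}$ yielding the exponent $(\pi s)^{2}d/(4z)$ under $\mathbb{E}_{\pi}$.

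Next I would check that this specialisation meets the hypotheses of the preceding theorem, which are those of Theorem \ref{general prop}. The first requires $\int_{\mathbb{R}}|\widetilde{z}|^{k}\,|\mu|(d\widetilde{z})=\mathbb{E}[|\pi|^{k}]=\psi(k)<\infty$ for every $k$, which holds since $\pi$ has finite moments of all orders. The second is the absolute convergence of the moment-weighted series, and this is precisely what the standing assumption that $\psi$ satisfies Condition \ref{sum} is designed to deliver; since $|\phi(k)|\leq\psi(k)$, the function $|\phi|$ inherits Condition \ref{sum}, so $K_{\phi}^{\mathcal{W},\gamma}$ is well defined.

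The substantive content, and the step I expect to be the main obstacle, is the justification of interchanging the three operations -- the expectation $\mathbb{E}_{\pi}$, the tensor-degree sum, and the contour integral $\oint_{C}$ -- exactly as in the proof of Theorem \ref{thm-kernel-BM-original}. Starting from $\mathbb{E}[S(\circ B)_{0,s}]=\exp\!\big(\tfrac{s}{2}\sum_{i}e_{i}^{2}\big)$, only even tensor degrees survive, so the $|\phi|$-norm reduces to $\sum_{k}\psi(2k)\,(s^{2}d/4)^{k}/(k!)^{2}$ after using $|\phi(2k)|=\mathbb{E}[\pi^{2k}]=\mathbb{E}[|\pi|^{2k}]=\psi(2k)$; finiteness of this series is the membership claim $\mathbb{E}[S(\circ B)_{0,s}]\in T_{|\phi|}(V)$. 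For the kernel formula I would expand $\Gamma_{d+1,d+1}^{c_{s}(\pi,z)\gamma}(t)$ through (\ref{series-1}) as $\sum_{n}\pi^{2n}(s/2z)^{n}I_{n}$, where $I_{n}$ denotes the $2n$-fold iterated-integral functional appearing in (\ref{eq_cosh_series}), integrate term by term over the unit circle using $\tfrac{1}{2\pi i}\oint_{C}z^{-(n+1)}e^{z}\,dz=1/n!$ from (\ref{eq-rf}), and finally apply $\mathbb{E}[\pi^{2n}]=\phi(2n)$ to recover $K_{\phi}^{\mathcal{W},\gamma}(s,t)=\sum_{n}\phi(2n)\,\tfrac{s^{n}}{2^{n}n!}I_{n}$. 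The interchange over $C$ is harmless because $|z|=1$ there makes the integrand uniformly bounded; the delicate point is the Fubini--Tonelli exchange of $\mathbb{E}_{\pi}$ with the series, for which I would produce the dominating series built from $\mathbb{E}[|\pi|^{2n}]=\psi(2n)$ and the factorial decay $|I_{n}|\leq L_{t}(\gamma)^{2n}/(2n)!$, and verify its summability from the hypothesis on $\psi$. This domination is the crux; once it is in place the identities follow by the same routine manipulations as in Theorem \ref{thm-kernel-BM-original}.
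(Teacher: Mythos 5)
Your proposal is correct and follows essentially the same route as the paper: the corollary is obtained there by specialising the preceding general theorem to $G=\mathbb{R}$, $\mu=dF$, $g(\widetilde z)=\widetilde z$ and $\alpha=1$, with the analytic interchanges justified by repeating the proof of Theorem \ref{thm-kernel-BM-original} ``with the obvious modifications''. Your explicit domination argument for exchanging $\mathbb{E}_{\pi}$ with the tensor-degree sum and the contour integral simply makes precise the ``by now standard estimates'' that the paper leaves implicit.
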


As an example, we recall the case $\phi(k)=\frac{\Gamma(m+1)\Gamma(k+1)}{\Gamma(k+m+1)}$
studied already in Section \ref{sec-Signature-Kernels}. Suppose the
random variable $\pi\sim\textrm{Beta}(1,m)$ is Beta distributed,
then the moments of $\pi$ are
\[
\mathbb{E}[\pi^{k}]=\frac{B(k+1,m)}{B(1,m)}=\phi(k).
\]
 We then have the following.
\begin{example}
\label{eg-kernel-BM-Beta}Let $\phi(k)=\frac{\Gamma(m+1)\Gamma(k+1)}{\Gamma(k+m+1)}$
and $B$ a $d$-dimensional Brownian motion. Then $\phi$ satisfies
Condition \ref{sum}. The expected Stratonovich signature, $\mathbb{E}\left[S\left(\circ B\right)_{0,s}\right]$,
is well defined and belongs to $T_{\phi}\left(V\right)$ for any $0\leq s<\infty$,
and the squared norm 
\begin{equation}
\left\vert \left\vert \mathbb{E}\left[S\left(\circ B\right)_{0,s}\right]\right\vert \right\vert _{\phi}^{2}=\frac{\Gamma(m+1)}{2\pi i}\oint_{C}z^{-(m+1)}e^{z}\frac{dz}{\sqrt{1-s^{2}d/z^{2}}}\label{eq-sqnorm-Beta}
\end{equation}
If $\gamma$ is
any continuous path of bounded variation, then
\begin{equation}
K_{\phi}^{\gamma,W}\left(s,t\right)=\frac{\Gamma(m+1)}{2\pi i}\oint_{C}z^{-(m+1)}e^{z}\left[\frac{1}{\sqrt{2\pi}}\int_{-\infty}^{+\infty}\Gamma_{d+1,d+1}^{c_{s}(x,z)\gamma}(t)e^{-\frac{x^{2}}{2}}dx\right]dz\label{eq-kernel-exp-Beta}
\end{equation}
where $c_{s}(x,z)=z^{-1}x\sqrt{s}\in\mathbb{C}$ and $\Gamma_{d+1,d+1}^{c_{s}(x,z)\gamma}(t)$
is the series (\ref{series-1}).
\end{example}

The representations above are slightly different from Corollary \ref{cor-kernel-BM-rv}
in which $\pi$ should be a Beta random variable. The expressions
above are obtained by the formulas below: 
\[
\frac{\Gamma(2k+1)}{2^{k}k!}=(2k-1)!!=\mathbb{E}_{X}[X^{2k}]\ \text{and}\ \frac{1}{\Gamma(2k+m+1)}=\oint_{C}z^{-(2k+m+1)}e^{z}dz
\]
where $X\sim N(0,1)$ is a standard normal random variable. In the
point view of computation, the Gaussian quadrature for approximating
the formula (\ref{eq-kernel-exp-Beta}) is much easier than using
the formula (\ref{eq-kernel-BM-rv}) with $\pi\sim\textrm{Beta}(1,m)$. 
\begin{rem}
In terms of the computation procedure, we take the signature kernel
in equation (\ref{eq-kernel-exp-Beta}) as an example. It can be calculated
in three successive steps. First, for fixed $z$, $x$ and $s$, get
the exact value of $\Gamma_{d+1,d+1}^{c_{s}(x,z)\gamma}(t)$ by the
explicit solution (\ref{eq-piecewise-solution}) to ODE (\ref{lin})
for piecewise linear path. Second, approximate the expectation 
\[
\mathbb{E}_{X}\left[\Gamma_{d+1,d+1}^{c_{s}(X,z)\gamma}(t)\right]=\frac{1}{\sqrt{2\pi}}\int_{-\infty}^{+\infty}\Gamma_{d+1,d+1}^{c_{s}(x,z)\gamma}(t)e^{-\frac{x^{2}}{2}}dx
\]
by classical Gaussian quadrature on the whole real line.
Third, approximate the contour integral using one of the methods described above.
The steps are summarised schematically as
follows:
\[
K_{\phi}^{\mathcal{W},\gamma}\left(s,t\right)=\frac{\Gamma(m+1)}{2\pi i}\underbrace{\oint_{C}z^{-(m+1)}e^{z}}_{(3)\ Contour\ approximation}\left[\underbrace{\frac{1}{\sqrt{2\pi}}\int_{-\infty}^{+\infty}\overbrace{\Gamma_{d+1,d+1}^{c_{s}(x,z)\gamma}(t)}^{(1)\ explicit\ solution}e^{-\frac{x^{2}}{2}}dx}_{(2)\ Gaussian\ quadrature}\right]dz.
\]
The general form (\ref{eq-bilinear-exp}) can also be computed by
these three steps successively but the quadrature formula will generally be 
more complicated to implement than the Beta random variable case. See Section \ref{subsec-Quadrature-Error-Estimates}
for details. 
\end{rem}

\section{Optimal Discrete Measures on Paths\label{sec-Optimal-Discrete-Measure}%
}

In the previous sections, we have introduced the $\phi$-signature kernels. We
described method for the evaluation of these kernels for a pair of continuous
bounded variation paths, and derived a closed-form expression for the expected
signature against Brownian motion. In particular, given a finite collection of
continuous bounded variation paths $\{\gamma_{1},\gamma_{2},\cdots,\gamma
_{n}\}$ on $V$ and a discrete measure  $\mu=\sum_{i=1}^{n}\lambda_{i}%
\delta_{\gamma_{i}}$supported on this set we can evaluate%
\[
\left\vert \left\vert \mathbb{E}_{X\sim\mu}\left[  S\left(  X\right)
_{0,1}\right]  \right\vert \right\vert _{\phi}^{2}=\sum_{i,j=1}^{n}%
\lambda_{i}\lambda_{j}K_{\phi}^{\gamma_{i},\gamma_{j}},
\]
and also
\[
\left\langle \mathbb{E}_{X\sim\mathcal{W}}\left[  S\left(  X\right)
_{0,1}\right]  ,\mathbb{E}_{X\sim\mu}\left[  S\left(  X\right)  _{0,1}\right]
\right\rangle _{\phi},
\]
where $\mathcal{W}$ denotes the Wiener measure. This can be used to measure
the similarity of  using the maximum mean discrepancy distance associated with
the $\phi$signature kernel:%
\[
d_{\phi}^{2}\left(  \mathcal{W},\mu\right)  =\left\vert \left\vert
\mathbb{E}_{X\sim\mathcal{W}}\left[  S\left(  X\right)  _{0,1}\right]
-\mathbb{E}_{X\sim\mu}\left[  S\left(  X\right)  _{0,1}\right]  \right\vert
\right\vert _{\phi}^{2},
\]
which can be used as the basis of goodness-of-fit tests to measure the
similarity of $\mu$ to Wiener measure. We refer to \cite{Gretton-2012} and
\cite{CO-2018}  where kernels have been proposed as a way to support similar
analyses. 

Changing our perspective, we can also attempt to find the optimiser over some
subset of measures $C,$.i.e.
\begin{equation}
\mu^{\ast}=\arg\min_{\mu\in C}\left\vert \left\vert \mathbb{E}_{X\sim
\mathcal{W}}\left[  S\left(  X\right)  _{0,1}\right]  -\mathbb{E}_{X\sim\mu
}\left[  S\left(  X\right)  _{0,1}\right]  \right\vert \right\vert _{\phi}%
^{2}\label{eq_minimise_mu}%
\end{equation}
to give the $d_{\phi}$-best approximation to Wiener measure on $C$. An example
in which this is tractable is when the support of $\mu$ in $C$ is fixed to be
$\{\gamma_{1},\gamma_{2},\cdots,\gamma_{n}\}$ and where the set over which the
optimisation is carried our is the set of probability measures with this
support. In other words, $C$ can be identified with the simplex $C_{n}%
=\left\{  \lambda:\sum_{i=}^{n}\lambda_{i}=1,\lambda_{i}\geq0\right\}  $. By
finding this optimum we can then compare the value $d_{\phi}\left(
\mathcal{W},\mu\right)  $, for a given measure $\mu,$ to the optimised value
$d_{\phi}\left(  \mathcal{W},\mu^{\ast}\right)  $ to and use as a guide to
whether $\mu$ is $d_{\phi}$-close to $\mathcal{W}$ when compared to discrete
measures having the same support. A closely related, although more advanced
problem, is the $\phi-$cubature problem of solving
\[
(\mu^{\ast},\{\gamma_{i}\}^{\ast})=\arg\min_{(\mu,\{\gamma_{i}\})}\left\vert
\left\vert \mathbb{E}\left[  S\left(  \circ B\right)  _{0,1}\right]
-\sum_{i=1}^{n}\lambda_{i}S\left(  \gamma_{i}\right)  _{0,1}\right\vert
\right\vert _{\phi}^{2},
\]
which in the case where $\phi\left(  n\right)  =0$ for $n\geq N$ corresponds
to find a degree$-N$ cubature formula in the sense of  \cite{LV-2004}. For $N$
large enough this can be minimised (not necessarily uniquely) to zero and
explicit formulas for $\left(  \lambda_{i},\gamma_{i}\right)  $ are known in
some case; again see \cite{LV-2004} for more details

\subsection{Existence and Uniqueness of Optimal Discrete Measure}

In this subsection, we consider in detail the problem described above. We
give conditions on the collection $\{\gamma_{1},\gamma_{2},\cdots,\gamma
_{n}\}$ so that
\[
L\left(  \mu\right)  =d_{\phi}^{2}\left(  \mathcal{W},\mu\right)
\]
has a unique minimiser on the set
\[
C_{n}=\left\{  \mu=%
%TCIMACRO{\tsum \nolimits_{i=1}^{n}}%
%BeginExpansion
{\textstyle\sum\nolimits_{i=1}^{n}}
%EndExpansion
\lambda_{i}\delta_{\gamma_{i}}:\lambda_{i}\geq0,\lambda_{1}+...+\lambda
_{n}=1\right\}  .
\]
In order to find the optimal discrete measure on the set of paths
$\{\gamma_{i}\}_{i=1}^{n}$, we could solve the problem in equation
(\ref{eq_minimise_mu}) with constraints $\lambda_{i}\geq0$ and $\sum_{i=1}%
^{n}\lambda_{i}=1$. This is equivalent to solving the quadratic optimisation
problem of quadratic functions with linear equality and inequality constraints
given by
\begin{equation}%
\begin{split}
&  \min_{x\in\mathbb{R}^{n}}\frac{1}{2}x^{T}Kx-h^{T}x\\
&  \text{subject to}\ \mathbf{1}^{T}x=1,\ x\geq0.
\end{split}
\label{eq_QP}%
\end{equation}
where
\[
K=\left(  K_{\phi}^{\gamma_{i},\gamma_{j}}\right)  _{i,j=1,\cdots,n},\text{
and }h=\left(  K_{\phi}^{\gamma_{1},\mathcal{W}},\cdots,K_{\phi}^{\gamma
_{n},\mathcal{W}}\right)  ^{T}.
\]
Existence and uniqueness of the optimal solution is guaranteed by the positive
definiteness of $K$. Some sufficient conditions for positive definiteness can
be obtained from the following lemma.

\begin{lem}
The set of all signatures $\mathcal{S}$ of continuous bounded variation
paths is a linearly independent subset of $T\left( \left( V\right) \right).$
\end{lem}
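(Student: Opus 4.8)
The plan is to reduce the statement to the classical fact that distinct characters of a commutative algebra are linearly independent, the relevant algebra here being the shuffle algebra $\left(T\left(V^{\ast}\right),\shuffle\right)$. Take finitely many distinct elements $u_{1},\dots,u_{n}\in\mathcal{S}$, each of the form $u_{i}=S\left(\gamma_{i}\right)_{s_{i},t_{i}}$. Since $T\left(V^{\ast}\right)$ is the dual of $T\left(\left(V\right)\right)$ and the evaluations against the dual basis $\left\{e_{I}^{\ast}\right\}$ recover every coordinate of a tensor series, a relation $\sum_{i=1}^{n}c_{i}u_{i}=0$ in $T\left(\left(V\right)\right)$ holds if and only if $\sum_{i=1}^{n}c_{i}u_{i}\left(f\right)=0$ for every $f\in T\left(V^{\ast}\right)$. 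Thus it suffices to prove that the linear functionals $f\mapsto u_{i}\left(f\right)$ on $T\left(V^{\ast}\right)$ are linearly independent over $\mathbb{R}$.

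The structural input is the shuffle identity $S\left(\gamma\right)_{s,t}\left(f\right)S\left(\gamma\right)_{s,t}\left(g\right)=S\left(\gamma\right)_{s,t}\left(f\shuffle g\right)$ recorded earlier, together with $S\left(\gamma\right)_{s,t}\left(1\right)=1$. These say that each $u_{i}$, regarded as a functional on the commutative unital algebra $\left(T\left(V^{\ast}\right),\shuffle\right)$, is a unital algebra homomorphism, i.e. a character. Because $\mathcal{S}$ is a set, the chosen $u_{i}$ are genuinely distinct as elements of $T\left(\left(V\right)\right)$, so for $i\neq j$ there is a word $f\in T\left(V^{\ast}\right)$ with $u_{i}\left(f\right)\neq u_{j}\left(f\right)$; hence the $u_{i}$ are pairwise distinct characters.

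The main step is then the Artin--Dedekind argument on linear independence of characters, which I would run directly with $\shuffle$ in place of the usual multiplication. Suppose for contradiction that $\sum_{i=1}^{n}c_{i}u_{i}=0$ is a nontrivial relation among distinct characters with $n$ minimal and all $c_{i}\neq0$; the case $n=1$ is excluded since $u_{1}\left(1\right)=1$ forces $c_{1}=0$, so $n\geq2$. Choose $f$ with $u_{1}\left(f\right)\neq u_{n}\left(f\right)$. Evaluating the relation at $f\shuffle g$ and using the shuffle identity gives $\sum_{i=1}^{n}c_{i}u_{i}\left(f\right)u_{i}\left(g\right)=0$ for all $g$; subtracting $u_{n}\left(f\right)$ times the original relation cancels the $i=n$ term and yields $\sum_{i=1}^{n-1}c_{i}\left(u_{i}\left(f\right)-u_{n}\left(f\right)\right)u_{i}=0$, a shorter relation that is nontrivial because the coefficient of $u_{1}$ equals $c_{1}\left(u_{1}\left(f\right)-u_{n}\left(f\right)\right)\neq0$. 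This contradicts minimality of $n$, so no nontrivial relation exists.

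I expect the only genuinely delicate point to be the reduction in the first paragraph: one must confirm that the pairing between $T\left(V^{\ast}\right)$ and $T\left(\left(V\right)\right)$ is nondegenerate on the $T\left(\left(V\right)\right)$ side, so that vanishing against every $f$ forces the linear combination itself to vanish. This is immediate from the definition of $T\left(V^{\ast}\right)$ as the dual of $T\left(\left(V\right)\right)$ and the fact that the functionals $e_{I}^{\ast}$ read off all coordinates of an element of $T\left(\left(V\right)\right)$. Everything else is formal once the shuffle identity is in hand.
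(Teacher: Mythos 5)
Your proof is correct, but it takes a genuinely different route from the paper's. Both arguments rest on the same structural input, namely that the shuffle identity together with $S(\gamma)_{s,t}(1)=1$ makes evaluation at a fixed signature a character of the commutative unital algebra $\left(T\left(V^{\ast}\right),\shuffle\right)$; they diverge in how they deduce independence of distinct characters. You run the classical Artin--Dedekind descent: take a minimal nontrivial relation, evaluate at $f\shuffle g$ for a separating $f$, subtract, and obtain a strictly shorter nontrivial relation. The paper instead gives a non-inductive, interpolation-style construction: for the index $j$ with $\lambda_{j}\neq 0$ it picks linear functionals $f_{i}$ (for $i\neq j$) with $f_{i}\left(h_{i}\right)=0$ and $f_{i}\left(h_{j}\right)=1$, and observes that the shuffle product of these $f_{i}$ agrees on $\mathcal{S}$ with the polynomial $x\mapsto\prod_{i\neq j}f_{i}\left(x\right)$, hence is a single linear functional vanishing at every $h_{i}$ with $i\neq j$ and taking the value $1$ at $h_{j}$; pairing it against the relation yields $\lambda_{j}=0$ at once. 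The paper's version buys an explicit biorthogonal family of shuffle polynomials dual to the given signatures, which is of independent interest; yours avoids the simultaneous choice of the $f_{i}$, needs only one separating element per step, and makes transparent that the lemma is exactly the linear independence of distinct characters. The reduction you flag as delicate --- that vanishing against all of $T\left(V^{\ast}\right)$ forces an element of $T\left(\left(V\right)\right)$ to vanish --- is indeed the only point requiring care, and it holds for precisely the reason you give, since the coordinate functionals $e_{I}^{\ast}$ lie in $T\left(V^{\ast}\right)$.
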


\begin{proof}
Suppose that $\left\{  h_{1},...,h_{n}\right\}  $ is a subset of $\mathcal{S}$
and suppose that $\sum_{i=1}^{n}\lambda_{i}h_{i}=0$ with not all $\lambda
_{i}=0,$ e.g., suppose that $\lambda_{j}\neq0.$ The vectors $h_{1},...h_{n}$
are distinct and so there exist linear functionals $f_{i}$ on $T\left(
\left(  V\right)  \right)  $ for $i\neq j$ with $f_{i}\left(  h_{i}\right)
=0$ and $f_{j}\left(  h_{j}\right)  =1.$ Let $p:T\left(  \left(  V\right)
\right)  \rightarrow\mathbb{R}$ be the polynomial $p\left(  x\right)  =%
%TCIMACRO{\tprod \nolimits_{i\neq j}}%
%BeginExpansion
{\textstyle\prod\nolimits_{i\neq j}}
%EndExpansion
f_{i}\left(  x\right)  $ then the linear functional $L$ defined by the shuffle
product $L=f_{1}\shuffle f_{2}...\shuffle f_{n}$ agrees with $p$ on $\mathcal{S}$
and hence we arrive at the contradiction
\[
\lambda_{j}=\sum_{i=1}^{n}\lambda_{i}L\left(  h_{i}\right)  =0.
\]
\end{proof}

\begin{cor}
Let $\left\{ \gamma _{1},...,\gamma _{n}\right\} $ be a collection of
continuous $V-$valued paths of bounded variation having distinct signatures.
If $\phi :\mathbb{N\cup }\left\{ 0\right\} \rightarrow \left( 0,\infty
\right) $ satisfies Condition \ref{sum} then the matrix $K=(\left\langle
S\left( \gamma _{i}\right) ,S\left( \gamma _{j}\right) \right\rangle _{\phi
})_{i,j=1,...,n}$ is positive definite.
\end{cor}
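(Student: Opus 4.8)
The plan is to recognise $K$ as the Gram matrix of the vectors $S(\gamma_1),\ldots,S(\gamma_n)$ with respect to a genuine inner product, and then to reduce positive definiteness to the linear independence of signatures established in the preceding lemma.

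First I would note that, since $\phi$ takes values in $(0,\infty)$ and satisfies Condition \ref{sum}, the bilinear form $\langle\cdot,\cdot\rangle_\phi=\sum_{k\ge0}\phi(k)\langle\cdot,\cdot\rangle_k$ is a bona fide positive-definite inner product on the Hilbert space $T_\phi(V)$, and that by Lemma \ref{lem-S-in-Tphi} each $S(\gamma_i)$ lies in $T_\phi(V)$. The strict positivity $\phi>0$ is the essential point here: it is what guarantees that $\langle a,a\rangle_\phi=0$ forces $a=0$, rather than merely yielding a semi-definite form. Note also that $K$ is real and symmetric, because $\phi$ is real-valued and $\langle\cdot,\cdot\rangle_\phi$ is symmetric, so positive definiteness is the assertion that $x^{T}Kx>0$ for every nonzero $x\in\mathbb{R}^{n}$.

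Next, for an arbitrary $x=(x_1,\ldots,x_n)^{T}\in\mathbb{R}^{n}$ I would use bilinearity to compute
\[
x^{T}Kx=\sum_{i,j=1}^{n}x_i x_j\left\langle S(\gamma_i),S(\gamma_j)\right\rangle_\phi=\left\langle \sum_{i=1}^{n}x_i S(\gamma_i),\sum_{j=1}^{n}x_j S(\gamma_j)\right\rangle_\phi=\left\Vert \sum_{i=1}^{n}x_i S(\gamma_i)\right\Vert_\phi^{2}\ge0,
\]
which exhibits $K$ as positive semi-definite and reduces matters to ruling out a nontrivial null vector.

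Finally I would argue that $x^{T}Kx=0$ forces $x=0$. Indeed, by the displayed identity $x^{T}Kx=0$ means $\sum_{i=1}^{n}x_i S(\gamma_i)=0$ in $T_\phi(V)$, and hence in $T((V))$. Since the $\gamma_i$ have distinct signatures, $\{S(\gamma_1),\ldots,S(\gamma_n)\}$ is a finite subset of $\mathcal{S}$ consisting of $n$ distinct elements, and the preceding lemma asserts that $\mathcal{S}$ is linearly independent; therefore $x_i=0$ for every $i$, so $K$ is positive definite. The substantive content lies entirely in that preceding lemma (linear independence of all signatures, via the shuffle-product trick) and in the promotion of the bilinear form to a genuine inner product when $\phi>0$; once these are in hand the statement is the standard fact that the Gram matrix of linearly independent vectors is positive definite, so I anticipate no real obstacle.
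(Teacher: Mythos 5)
Your proof is correct and is essentially identical to the paper's: both exhibit $x^{T}Kx$ as $\bigl\Vert \sum_{i}x_{i}S(\gamma_{i})\bigr\Vert_{\phi}^{2}$ and invoke the preceding lemma on linear independence of signatures together with the fact that $\Vert\cdot\Vert_{\phi}$ is a genuine norm when $\phi>0$. The paper phrases this directly (nonzero $x$ gives a nonzero combination, hence strictly positive norm) while you argue the contrapositive, but the content is the same.
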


\begin{proof}
If $0\neq x\in \mathbb{R}^{n}$ then the previous proposition ensures that $%
\sum_{i=1}^{n}x_{i}S\left( \gamma _{i}\right) _{a,b}\neq 0.$ Since $%
\left\vert \left\vert \cdot \right\vert \right\vert _{\phi }$ is a norm we
have
\[
0<\left\vert \left\vert \sum_{i=1}^{n}x_{i}S\left( \gamma _{i}\right)
_{a,b}\right\vert \right\vert _{\phi }^{2}=x^{T}Kx
\]%
as required.
\end{proof}

We now prove an existence and uniqueness theorem for the closest discrete
probability measure to Wiener measure which is supported on $\left\{
\gamma_{1},...,\gamma_{n}\right\}  .$

\begin{prop}
\label{thm-optimal-measure}Let $\left\{ \gamma _{1},...,\gamma _{n}\right\} $
be a collection of continuous $V-$valued paths of bounded variation defined
over $\left[ a,b\right] $ and having distinct signatures. Assume that  $\phi
:\mathbb{N\cup }\left\{ 0\right\} \rightarrow \left( 0,\infty \right) $
satisfies Condition \ref{sum}. Let $C_{n}$ denote the $n-$simplex $\left\{
\mu =\left( \mu _{1},...,\mu _{n}\right) :\sum_{i=1}^{n}\mu _{i}=1,\mu
_{i}\geq 0\right\} $ so that $C_{n}$ is in one-to-one correspondence with
the set of probability measures supported on $\left\{ \gamma _{1},...,\gamma
_{n}\right\} $ by the identification of $\mu $ with $\sum_{i=1}^{n}\mu
_{i}\delta _{\gamma _{i}}.$ Then there exists a unique $\mu ^{\ast }$ which
minimises $d_{\phi }\left( \mu ,\mathcal{W}\right) $ over $\mu $ in $C_{n},$%
i.e.
\[
\mu ^{\ast }=\arg \min_{\mu \in C_{n}}\left\vert \left\vert \mathbb{E}\left[
S\left( \circ B\right) _{a,b}\right] -\mathbb{E}_{X\sim \mu }\left[ S\left(
X\right) _{a,b}\right] \right\vert \right\vert _{\phi }
\]
\end{prop}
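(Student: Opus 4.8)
The plan is to recognise $L(\mu)=d_{\phi}^{2}\left(\mathcal{W},\mu\right)$ as a strictly convex quadratic function on the simplex and then invoke the standard existence/uniqueness result for convex optimisation over a compact convex set. Writing $w:=\mathbb{E}\left[S\left(\circ B\right)_{a,b}\right]$ and identifying $\mu=\sum_{i}\lambda_{i}\delta_{\gamma_{i}}\in C_{n}$ with its weight vector $\lambda=\left(\lambda_{1},\dots,\lambda_{n}\right)$, linearity of the expectation gives $\mathbb{E}_{X\sim\mu}\left[S\left(X\right)_{a,b}\right]=\sum_{i}\lambda_{i}S\left(\gamma_{i}\right)_{a,b}$, so expanding the squared $\phi$-norm by bilinearity of $\left\langle\cdot,\cdot\right\rangle_{\phi}$ yields
\[
L(\lambda)=\left\Vert w\right\Vert_{\phi}^{2}-2\sum_{i=1}^{n}\lambda_{i}\left\langle w,S\left(\gamma_{i}\right)\right\rangle_{\phi}+\sum_{i,j=1}^{n}\lambda_{i}\lambda_{j}\left\langle S\left(\gamma_{i}\right),S\left(\gamma_{j}\right)\right\rangle_{\phi}=\lambda^{T}K\lambda-2h^{T}\lambda+\left\Vert w\right\Vert_{\phi}^{2},
\]
with $K=\left(K_{\phi}^{\gamma_{i},\gamma_{j}}\right)_{i,j}$ and $h_{i}=\left\langle w,S\left(\gamma_{i}\right)\right\rangle_{\phi}=K_{\phi}^{\gamma_{i},\mathcal{W}}$. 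Up to the irrelevant additive constant $\left\Vert w\right\Vert_{\phi}^{2}$ and an overall positive scaling, minimising $L$ over $C_{n}$ is therefore exactly the quadratic programme \eqref{eq_QP}.

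First I would record that $C_{n}$ is a nonempty, compact and convex subset of $\mathbb{R}^{n}$, and that $L$ is a genuine finite-valued continuous (indeed polynomial in $\lambda$) function on it. The finiteness of the entries of $K$ follows since each $S\left(\gamma_{i}\right)\in T_{\phi}\left(V\right)$ by Lemma \ref{lem-S-in-Tphi}, while the membership $w\in T_{\phi}\left(V\right)$, needed so that $\left\Vert w\right\Vert_{\phi}^{2}$ and the entries of $h$ are finite, is precisely the first assertion established (for the relevant $\phi$) in Theorem \ref{hyperbolic}.

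The decisive structural step is the strict convexity of $L$, i.e. the positive definiteness of $K$. Here I would appeal directly to the corollary proved just above the statement: the $\gamma_{i}$ have distinct signatures and $\phi$ takes values in $\left(0,\infty\right)$ while satisfying Condition \ref{sum}, so $K=\left(\left\langle S\left(\gamma_{i}\right),S\left(\gamma_{j}\right)\right\rangle_{\phi}\right)_{i,j}$ is positive definite. Consequently $L$ has Hessian $2K\succ0$ and is strictly convex on all of $\mathbb{R}^{n}$, in particular on $C_{n}$. Existence of a minimiser then follows from the Weierstrass theorem ($L$ continuous, $C_{n}$ compact), and uniqueness follows from strict convexity: were $\mu^{(1)}\neq\mu^{(2)}$ two minimisers, their midpoint would lie in the convex set $C_{n}$ and strict convexity would force $L\bigl(\tfrac{1}{2}(\mu^{(1)}+\mu^{(2)})\bigr)<\min_{C_{n}}L$, a contradiction.

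The main obstacle is not the convex-analytic machinery, which is routine, but securing the positive definiteness of $K$; this rests on the linear independence of the signatures $\left\{S\left(\gamma_{i}\right)\right\}$, which is itself the nontrivial input supplied by the shuffle-product argument in the preceding lemma. A secondary point deserving care is verifying that $w$ lies in $T_{\phi}\left(V\right)$ so that the functional is finite-valued: for weight sequences $\phi$ growing faster than the Brownian expected signature can absorb this can fail, so implicitly one restricts to $\phi$ (such as $\phi(k)=\left(\tfrac{k}{2}\right)!$) for which Theorem \ref{hyperbolic} guarantees $\left\Vert w\right\Vert_{\phi}<\infty$.
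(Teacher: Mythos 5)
Your proposal is correct and follows essentially the same route as the paper: reduce to the quadratic programme with Gram matrix $K$ and vector $h$, get existence from continuity on the compact convex simplex, and get uniqueness from the positive definiteness of $K$ supplied by the preceding corollary (the paper spells out strict convexity by hand via the midpoint of two putative minimisers, which is the same argument you invoke abstractly). Your additional remark that one must separately check $\mathbb{E}\left[S\left(\circ B\right)_{a,b}\right]\in T_{\phi}\left(V\right)$ — a finiteness condition not implied by Condition \ref{sum} alone — is a legitimate point that the paper's proof leaves implicit.
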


\begin{proof}
It is easy to verify that the set $C_{n}$ is a compact and convex
set in $\mathbb{R}^{n}$. Since $f(x)=\frac{1}{2}x^{T}Kx-h^{T}x$
is continuous on the compact set $C_{n}$, then $f$ is bounded and
attains its minimum on some points in the set $C_{n}$. That means
that there exist optimal solutions $x^{*}\in C_{n}$ such that
\[
f(x^{*})=\min_{x\in C_{n}}f(x).
\]
Let $m=\min_{x\in C_{n}}f(x)$ and $x_{1}^{*}$, $x_{2}^{*}\in C_{n}$
be two optimal solutions. Then, for any $\alpha\in[0,1]$, we have
\[
\alpha x_{1}^{*}+(1-\alpha)x_{2}^{*}\in C_{n}
\]
and
\[
m\leq f(\alpha x_{1}^{*}+(1-\alpha)x_{2}^{*})\leq\alpha f(x_{1}^{*})+(1-\alpha)f(x_{2}^{*})=m.
\]
Thus,
\[
\frac{1}{2}(x_{1}^{*})^{T}Kx_{2}^{*}-\frac{1}{2}h^{T}(x_{1}^{*}+x_{2}^{*})=m.
\]
Since
\[
\begin{aligned}f(x_{1}^{*})=\frac{1}{2}(x_{1}^{*})^{T}Kx_{1}^{*}-h^{T}x_{1}^{*}=m\ \text{and}\ f(x_{2}^{*})=\frac{1}{2}(x_{2}^{*})^{T}Kx_{2}^{*}-h^{T}x_{2}^{*}=m,\end{aligned}
\]
combining above three equations together, we have
\[
(x_{2}^{*}-x_{1}^{*})^{T}K(x_{2}^{*}-x_{1}^{*})=0.
\]
Since the matrix $K$ is positive definite on $\mathbb{R}^{n}$, we
must have that $x_{1}^{*}=x_{2}^{*}$. So we have concluded our proof.
\end{proof}

\begin{rem}
The next aim is to find the optimal measure in Theorem \ref%
{thm-optimal-measure} and the minised value of the objective. In some cases
this can be done explicitly. Letting \thinspace $f$ be the function in the
proof, we have the following cases:
\begin{description}
\item[Case 1] There exists $x^{\ast }\in C_{n}$ such that $\nabla f(x^{\ast
})=0$. Then the optimal solution and the value are
\[
x^{\ast }=K^{-1}h\in C_{n},\ f(x^{\ast })=-\frac{1}{2}h^{T}K^{-1}h.
\]
\item[Case 2] Assume that $\nabla f$ is non-vanishing on $C_{n}$. If there
exists a vertex $e_{m}$ of $C_{n}$ such that $f\left( e_{m}\right) <f\left(
e_{j}\right) $ for all $j\neq m$ and if it satisfies that
\begin{equation}
(Ke_{m}-h)^{T}(e_{i}-e_{m})\geq 0,\ \forall i\in \lbrack n]:=\{1,2,\cdots
,n\},  \label{eq_emim}
\end{equation}%
then the optimal solution is $e_{m}$ and $f(e_{m})=\frac{1}{2}%
e_{m}^{T}Ke_{m}-h^{T}e_{m}$. Actually, we have
\[
\begin{aligned}f(x)-f(e_{m}) & =\nabla f(e_{m})^{T}(x-e_{m})+\frac{1}{2}(x-e_{m})^{T}\nabla^{2}f(e_{m})(x-e_{m})\\
& =(Ke_{m}-h)^{T}(x-e_{m})+\frac{1}{2}(x-e_{m})^{T}K(x-e_{m})\\
& =(Ke_{m}-h)^{T}\left(\text{\ensuremath{\sum}}_{i=1}^{n}\alpha_{i}e_{i}-e_{m}\right)+\frac{1}{2}(x-e_{m})^{T}K(x-e_{m})\\
& =\text{\ensuremath{\sum}}_{i=1}^{n}\alpha_{i}(Ke_{m}-h)^{T}(e_{i}-e_{m})+\frac{1}{2}(x-e_{m})^{T}K(x-e_{m})\\
& \geq0,
\end{aligned}
\]%
where $x=\text{$\sum $}_{i=1}^{n}\alpha _{i}e_{i}$ is a convex combination
of vertexes of $C_{n}$. The condition (\ref{eq_emim}) means that
\[
\begin{aligned}\tilde{f}(t) & =f((1-t)e_{m}+te_{i})\\
& =\frac{1}{2}(e_{i}-e_{m})^{T}K(e_{i}-e_{m})t^{2}+(Ke_{m}-h)^{T}(e_{i}-e_{m})t+f(e_{m})
\end{aligned}
\]%
is increasing on the interval $[0,1]$.
\end{description}
\end{rem}

If $\nabla f$ does not vanish in $C_{n}$ and the conditions in case 2 of the
above do not hold, then there is no explicit expression for the optimal
solution and alternative numerical methods are needed to determine the
minimiser. Common tools are active-set methods and interior point methods; see \cite{Wong-2011,Wright} and the references therein). 

\section{Examples and Numerical Results\label{sec-Numerical-Application}}

In this section, we give some numerical results to illustrate the usefulness of general
signature kernels in measuring the similarity/alignment between a given
discrete measures on paths and Wiener measure. We illustrate the use of these
measures in a number of examples. As in the previous section let $\mu
=\sum_{i=1}^{n}\lambda_{i}\delta_{\gamma_{i}}$ be a discrete probability
measure supported on a finite collection of continuous bounded variation paths
$\gamma:[0,1]\rightarrow V$ and denote the Wiener measure on $\mathcal{W}$.
A plausible measure of the alignment between these two expected signatures is
\begin{equation}
\cos\angle_{\phi}(\mu,\mathcal{W}):=\frac{\left\langle \mathbb{E}_{X\sim\mu
}\left[  S\left(  \circ X\right)  _{0,1}\right]  ,\mathbb{E}_{X\sim
\mathcal{W}}\left[  S\left(  X\right)  _{0,1}\right]  \right\rangle _{\phi}%
}{\left\vert \left\vert \mathbb{E}_{X\sim\mu}\left[  S\left(  \circ X\right)
_{0,1}\right]  \right\vert \right\vert _{\phi}\left\vert \left\vert
\mathbb{E}_{X\sim\mathcal{W}}\left[  S\left(  X\right)  _{0,1}\right]
\right\vert \right\vert _{\phi}}.\label{eq-corr-mu-W}%
\end{equation}
It follows from our earlier discussion that $\cos\angle_{\phi}(\mu
,\mathcal{W})\in\left[  0,1\right]  .$ A justification for this quantity
measuring the alignment of the measures $\mu$ and $\mathcal{W}$, rather than
just their expected signatures, is that for any given pair of measures
$\nu_{1}$ and $\nu_{2}$ on a space of (rough) paths it holds that $\cos
\angle_{\phi}(\nu_{1},\nu_{2})=1$ if and only if there exists $\lambda\in%
%TCIMACRO{\U{211d} }%
%BeginExpansion
\mathbb{R}
%EndExpansion
$ with
\[
\mathbb{E}_{X\sim\nu_{1}}\left[  S\left(  \circ X\right)  _{0,1}\right]
=\lambda\mathbb{E}_{X\sim\nu_{2}}\left[  S\left(  \circ X\right)
_{0,1}\right]  .
\]
The fact that $\lambda=1,$ and hence that the expected signatures coincide,
follows by interpreting this equality under the projection $\pi_{0}:T\left(
\left(  V\right)  \right)  \rightarrow%
%TCIMACRO{\U{211d} }%
%BeginExpansion
\mathbb{R}
%EndExpansion
$. Another quantity we use is the MMD distance
\begin{equation}
d_{\phi}(\mu,\mathcal{W})=\left\vert \left\vert \mathbb{E}_{X\sim\mu}\left[
S\left(  \circ X\right)  _{0,1}\right]  -\mathbb{E}_{X\sim\mathcal{W}}\left[
S\left(  X\right)  _{0,1}\right]  \right\vert \right\vert _{\phi
},\label{eq-loss-mu-W}%
\end{equation}
which we have already discussed extensively.

\subsection{Discrete Measures on Brownian Paths}

In Section \ref{sec-Optimal-Discrete-Measure}, we proved the existence of a
unique optimal probability measure $\mu^{\ast}$ supported on $\{\gamma
_{1},\cdots,\gamma_{n}\}$ such that
\[
\mu^{\ast}=\arg\min_{\mu\in C_{n}}\left\vert \left\vert \mathbb{E}\left[
S\left(  \circ B\right)  _{0,1}\right]  -\mathbb{E}^{\mu}\left[  S\left(
\gamma\right)  _{0,1}\right]  \right\vert \right\vert _{\phi}^{2}.
\]
We now present an example in which $\{\gamma_{1},\cdots,\gamma_{n}\}$ is
obtained as the piecewise linear interpolation of $n$ i.i.d discretely-sampled
Brownian paths. We consider two cases for $\phi:$

\begin{enumerate}
\item  $\phi(k)=\left(  \frac{k}{2}\right)  !$ for $n\in%
%TCIMACRO{\U{2115} }%
%BeginExpansion
\mathbb{N}
%EndExpansion
\cup\left\{  0\right\}.$We refer to the resulting $\phi$-signature kernel, somewhat inexactly, as the \textit{the factorially-weighted
signature kernel}.

\item   The original signature kernel $\phi(k)\equiv1$.
\end{enumerate}

\begin{example}
We randomly sample $n$ i.i.d. Brownian motion paths in $\mathbb{R}^{d}$. Each
path sampled over the time interval $[0,1]$, on an equally-spaced partition $0=t_{1}<t_{2}<\cdots<t_{m}=1$ with
$t_{j+1}-t_{j}=\frac{1}{m-1}$. We denote the resulting finite set piecewise linearly interpolated Brownian sample paths as
\[
\mathcal{S}(n,m,d)=\{B_{i}\}_{i=1}^{n}\ \text{with}\ B_{i}=\{B_{i}(t_{j}%
)\in\mathbb{R}^{d}\}_{j=1}^{m}.
\]
Figure \ref{fig_BM} and \ref{fig_BM-original}, displays the
alignment $\cos\angle_{\phi}(\mu^{*},\mathcal{W})$ and the similarity $d_{\phi}(\mu^{*},\mathcal{W})$
for the optimal discrete probability measure supported on
$\mathcal{S}(n,m,d)$, in which the number
of sample paths $n=10$ and the observation points $m=10$ are fixed and the
dimension $d$ is varied over the range 2 to 6. We run 400 independent experiments
for each $d$, that is, we generate 400 independent samples of the sets $\mathcal{S}(n,m,d)$ for each
dimension $d$. Each set $\mathcal{S}(n,m,d)$ has an optimal measure associated with it, which we compute. The boxplots in Figure
\ref{fig_BM} and \ref{fig_BM-original} show the median, range and interquartile range of the values of the alignment and
the similarity of the optimal discrete measures over these 400 samples.
Qualitatively we can see from both quantities show dependence on the dimension of the state space, with the
alignment decreasing and the dis-similarity increasing w.r.t. the dimension. We can also compare
the results using the two different $\phi$-signature kernels with the original signature kernel showing the same behaviour
w.r.t. the dimension having a persistently higher level of alignment than under the factorially-weighted signature kernel
across all of the dimensions considered.
\end{example}

\begin{figure}[tbh]
\begin{centering}
\includegraphics[scale=0.55]{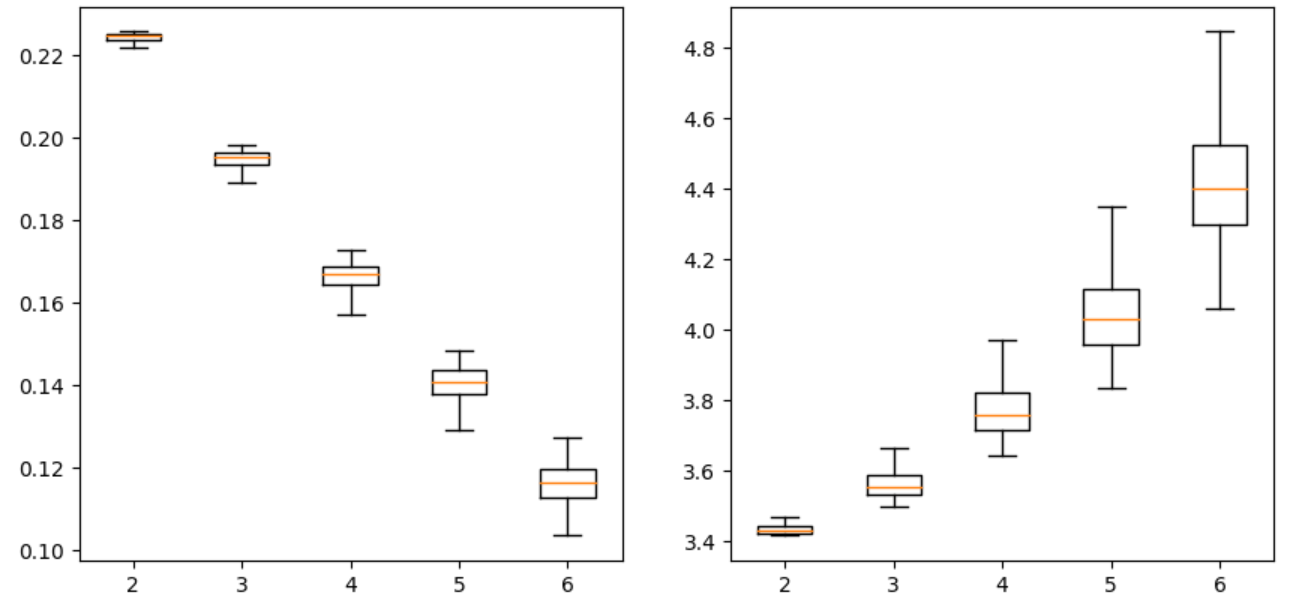}
\par\end{centering}
\caption{Boxplots of the factorially-weighted signature kernel.
(a) The left panel shows the distribution of the values of the alignment $\cos\angle_{\phi}(\mu^{*},\mathcal{W})$ of the optimal measure and the Wiener
measure across 400 samples. The x-axis is the dimension of the Brownian motion, and the y-axis the value of the
alignment. (b) The right panel shows the same for the MMD distance $d_{\phi}(\mu^{*},\mathcal{W}).$}
\label{fig_BM}
\end{figure}

\begin{figure}[tbh]
\begin{centering}
\includegraphics[scale=0.43]{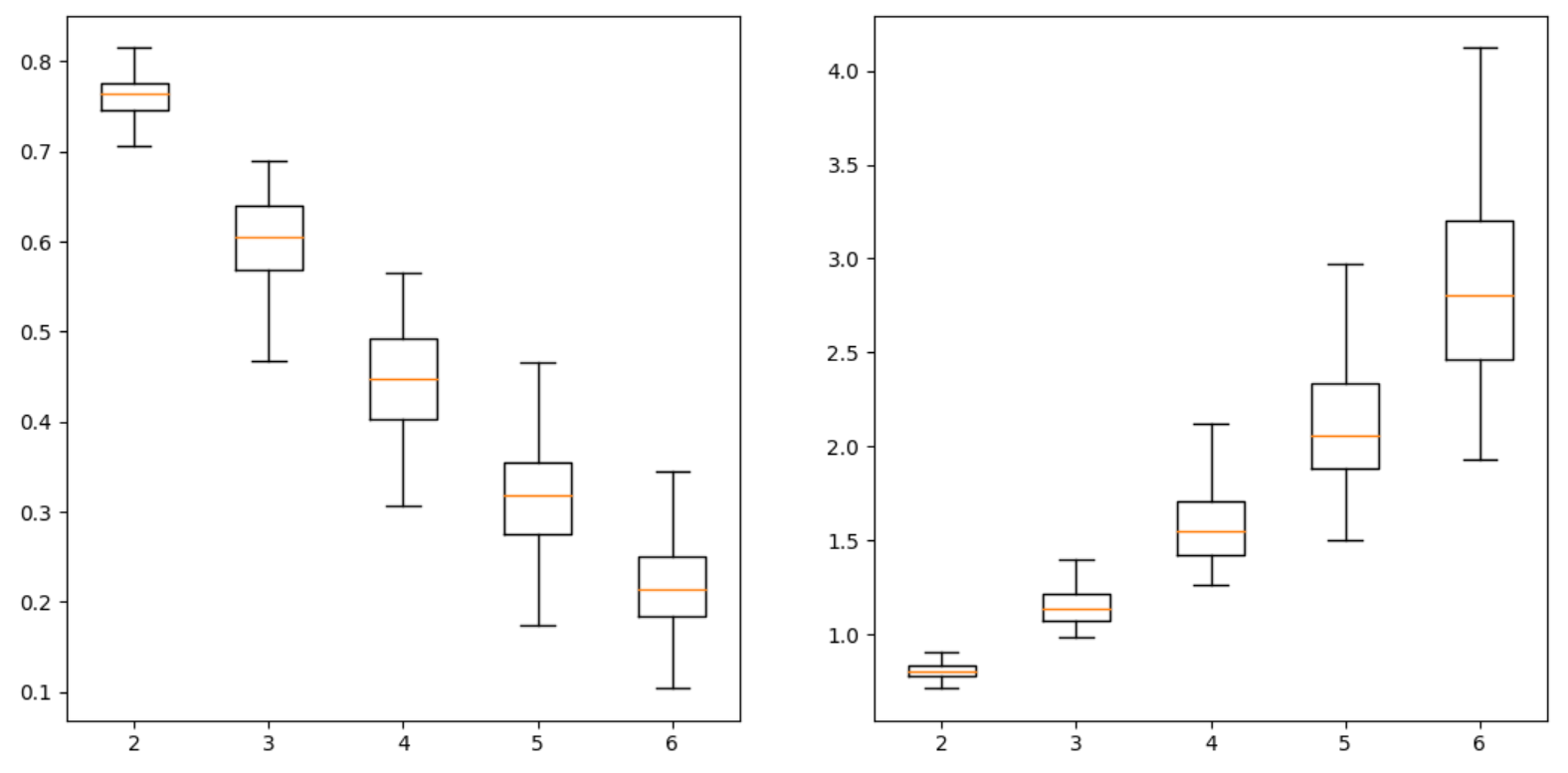}
\par\end{centering}
\caption{The optimal measure under the original signature kernel}%
\label{fig_BM-original}%
\end{figure}

\subsection{Examples using cubature formulae}

In the paper \cite{LV-2004}, Lyons and Victoir studied cubature on
Wiener space. Let $C_{bv}([0,T],V)$ be a subset of Wiener space made of
bounded variation paths. We say that the paths $\gamma_{1},\cdots,\gamma
_{n}\in C_{bv}([0,T],V)$ and the positive weights $\lambda_{1},\cdots
,\lambda_{n} $ define a cubature formula on Wiener space of degree $m$ at time
$T$ if
\[
\mathbb{E}\left[  S\left(  \circ B\right)  _{0,T}(e_{I}^{*})\right]
=\sum_{j=1}^{n}\lambda_{j}S\left(  \gamma_{j}\right)  _{0,T}(e_{I}^{*})
\]
for all $I\in\mathcal{A}_{m}:=\{I=(i_{1},\cdots,i_{k}):k\leq m\}$ with
$m\in\mathbb{N}$.

Cubature on Wiener space can be an effective way to develop high-order numerical schemes for
high-dimensional stochastic differential equations and parabolic
partial differential equations, see \cite{LV-2004}. In Section 5 of
\cite{LV-2004}, the authors also construct an explicit cubature formula of degree 5 for 2-dimensional Brownian motion. The reader
can find formulas of these cubature paths and measure in tables 2 and 3 in the same reference.

In this subsection, we analyse the results for a family of $\phi$-signature
kernels on three discrete probability measures supported on this collection of cubature paths. We consider the cubature weights themselves, the empirical measure of the sample (i.e. where they are equally weighted) and the optimal measure obtained from 
Section \ref{sec-Optimal-Discrete-Measure}. In Figure \ref{fig_cubature_Beta},
we show the similarity of these discrete measures and the Wiener measure under
the family of Beta-weighted signature kernels given by  
\begin{equation}
\phi(k)=\frac{\Gamma(m+1)\Gamma(k+1)}{\Gamma(k+m+1)}
\end{equation}
for various values of $m$ in the weight $\phi$ (shown along the horizontal axis).

The plot on the left panel of \ref{fig_cubature_Beta} shows that as the parameter $m$ increases these three
distances first increase fast and then gradually go down. We see that the distance of the optimal measure and the Wiener measure is
smallest and the distance of the empirical measure is much larger than the
distance of cubature measure.  The right
pane shows the ratio of the distance of optimal measure and the distance
of cubature measure for different choices of $m$.

\begin{figure}[tbh]
\begin{centering}
\includegraphics[scale=0.45]{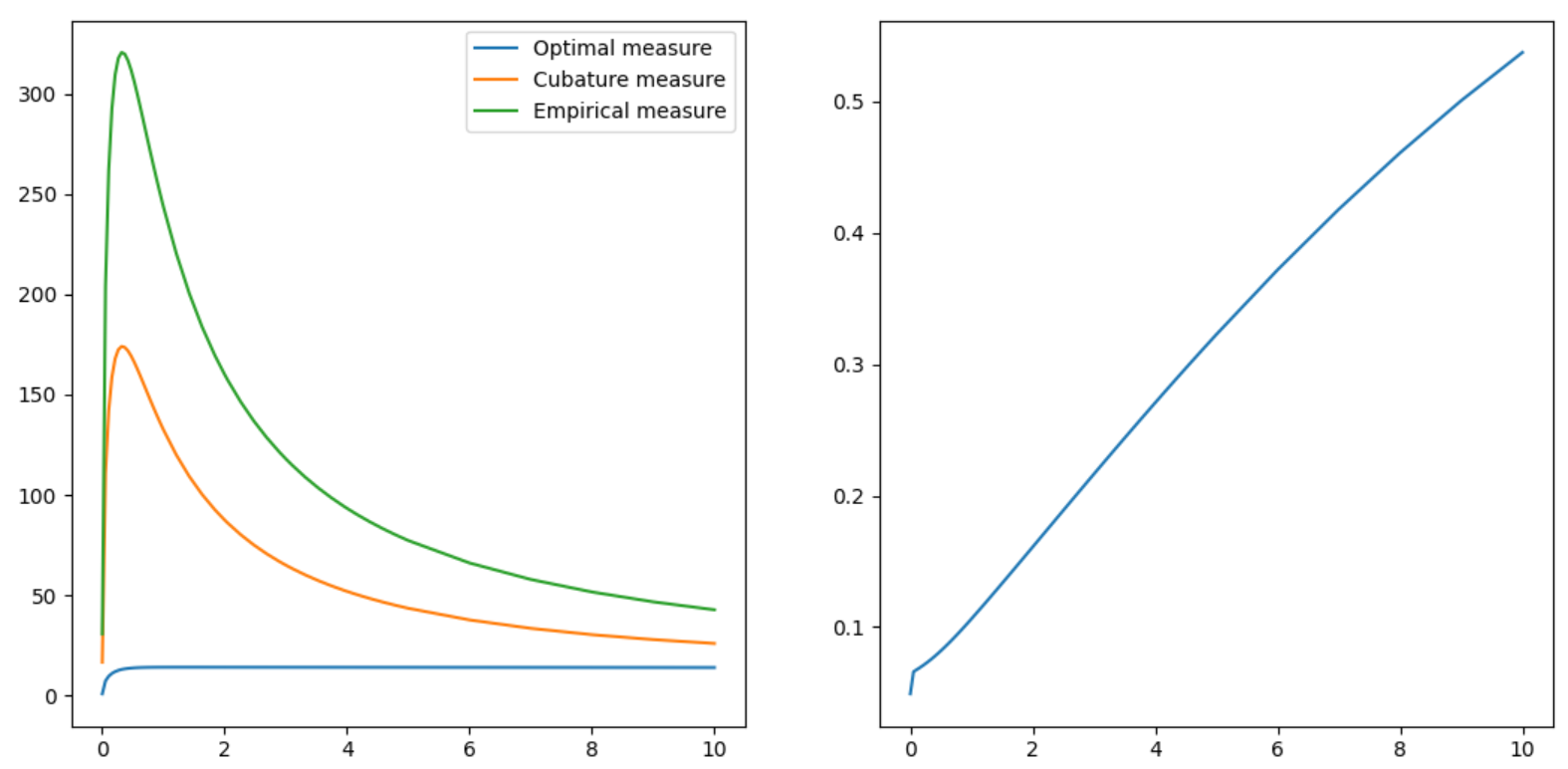}
\par\end{centering}
\caption{The similarity under a family of Beta-weighted signature kernels. The
left panel is the plot of the distance of these discrete measures and the
Wiener measure plotted against different values of $m$ on the horizontal axis. The right
panel plots the ratio of the optimal distance and the cubature
distance.}%
\label{fig_cubature_Beta}%
\end{figure}

\subsection{Applications in Signal Processing}

The alignment in equation (\ref{eq-corr-mu-W}) and the similarity in equation
(\ref{eq-loss-mu-W}) defined by the $\phi$-signature kernel give us a way of
determining how large a given discrete measure is different to the Wiener
measure. We can use these quantities to measure deviation of a discrete
measure from a reference measure (i.e. the Wiener measure here). A natural
application of these methods in signal processing is to mitigate/detect the (additive) contamination of 
white noise under different types of perturbation. 

The examples studied here are motivated by an attempt to study radio frequency interference
(RFI) in the radio astronomy. In this setting astronomers would like to obtain
high-resolution sky images of an interested astrophysical object using measurements from an array of antennas 
(e.g. the Karl G. Jansky Very Large Array
(VLA) etc.). To observe the sky and then synthesis the sky image interested.
The observation is called visibility $V_{ij}(t,v,p)$, where $ij$ is an antenna
pair, $t$ is the time integration, $v$ is the frequency and $p$ is the
polarization. Usually the visibility would be contaminated by thermal noise
and radio frequency interference (RFI). So the observation data from an
interferometer can be broken down into three components: the astrophysical sky
signals, thermal noise and RFI. The first component is slowly varying which
can be removed in the observation data by sky-subtraction method (see e.g.
\cite{Wilensky2019}). The RFI signal is usually much stronger than thermal
noise but is also sometimes ultra-faint. For different antennas, the RFI contamination
is systematic and thermal noise can be assumed to be independent. In order to
obtain a high-resolution image, the first step
is to design some methods to identify and then, if possible, to remove the RFI component of the observation. 

We consider two idealised types of RFI contamination. The first is by simple superposition with a 
sine wave of a fixed single frequency and a given amplitude and phase, so that the interference is narrow-band but persistent over time. The second will be to consider a short duration spike, as modelled in the paper by Davis and Monroe \cite{Davis-1984} in the univariate setting, in which the Brownian signal undergoes a perturbation at a uniformly distributed random time to give
\begin{equation}
B(t)+\epsilon\sqrt{(t-U)^{+}}. 
\end{equation}

We again compare the use of two $\phi$-signature kernels. The
factorially-weighted signature kernel and the original
signature kernel.

\begin{example}
Working in $d$-dimensions we take a path of the form
\[
X_{i}^{(j)}(t)=B_{i}^{(j)}(t)+\epsilon\sin(2\pi\nu t-\phi_{i}^{(j)}%
),\ j=1,2,\cdots,d
\]
where the frequency $\nu$ is fixed, the phase shifts are $\phi_{i}^{(j)}$ and $\epsilon$ denotes a (small) fixed amplitude. Let a finite
collection of sample paths on time interval $[0,1]$ as
\[
\mathcal{S}(n,m,d)=\{X_{i}\}_{i=1}^{n},\ \text{where}\ X_{i}=\{(X_{i}%
^{(1)}(t_{j}),\cdots,X_{i}^{(d)}(t_{j}))\in\mathbb{R}^{d}\}_{j=1}^{m}.
\]
In Figure \ref{fig_BMsine} and \ref{fig_BMsine-original}, we fixed
$(n,m,d)=(10,10,2)$, $\epsilon$ from $[0,1]$and the frequency $\nu\in\{2,3\}$.
We run 100 collections of paths $\mathcal{S}(n,m,d)$ for each $\epsilon$ and
frequency $\nu$. The figures show the deviation of the alignment and the
similarity of the optimal measure (the empirical measure, resp.) and the
Wiener measure, in which the middle line is the median of the alignment or the
similarity resp., and the shadow represents the range from the lower
quartile to the upper quartile. We generate 100 experiments for each
$\epsilon$.
The figures show that the alignment decreases very fast to a low level and
the dis-similarity increases very quickly as $\epsilon$ becomes large for both the
optimal measure and the empirical measure. At larger frequencies $\nu$, the alignment (dis-similarity) decays (grows) more rapidly. 
\end{example}

Finally we present an example based on the construction in the paper of Davis and Monroe \cite{Davis-1984} mentioned earlier. 
Here the interference is characterised by a sudden high energy spike at a uniform random time. 

\begin{figure}[tbh]
\begin{centering}
\includegraphics[scale=0.55]{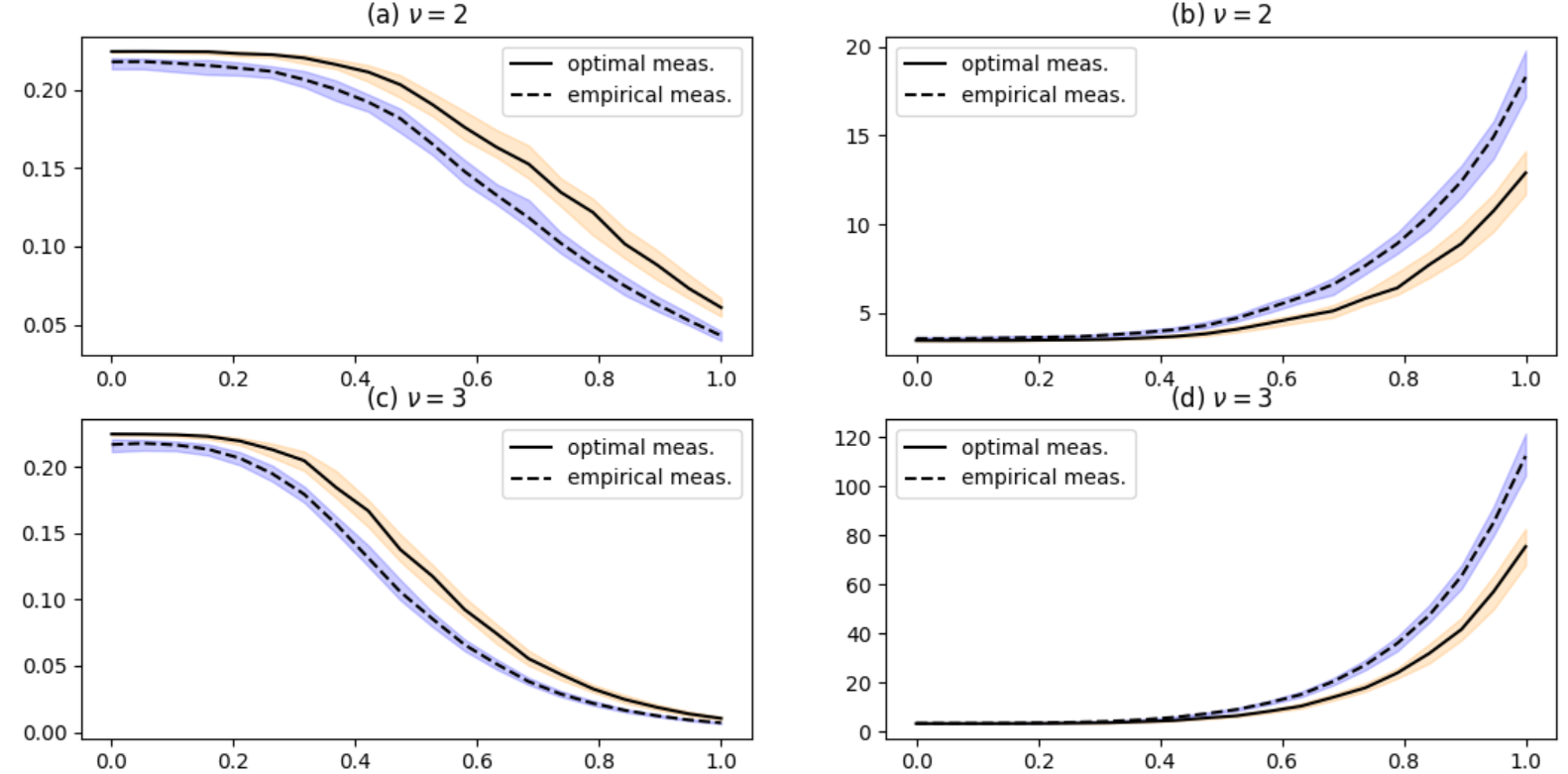}
\par\end{centering}
\caption{The case for the factorially-weighted signature kernel. (a) and (c)
show similarities of discrete measures and the Wiener measure where the horizontal is
the value of $\epsilon$ and vertical axis is the value of alignment. (b) and (d) show
similarities of discrete measures and the Wiener measure. The solid line is
for the optimal measure while the dashed line is for the empirical measure. The
upper panel is for the frequency $\nu=2$ and the lower is for $\nu=3$.}%
\label{fig_BMsine}%
\end{figure}

\begin{figure}[tbh]
\begin{centering}
\includegraphics[scale=0.55]{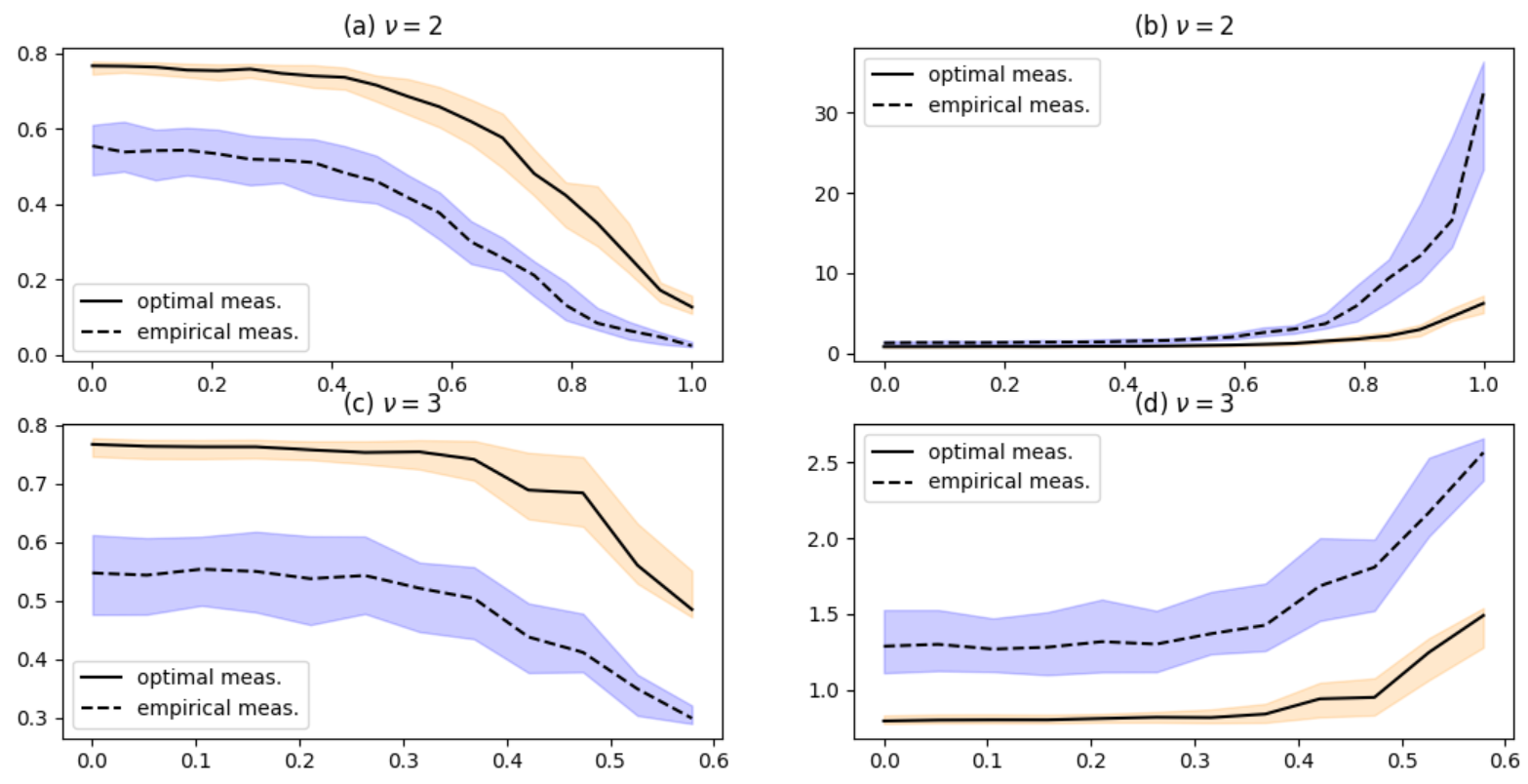}
\par\end{centering}
\caption{The same example under the original signature kernel}%
\label{fig_BMsine-original}%
\end{figure}

\begin{example}
We define
\[
X_{i}^{(j)}(t)=B_{i}^{(j)}(t)+\epsilon\sqrt{(t-U_{i})^{+}},\ j=1,2,\cdots,d
\]
where $U_{i}$ is uniformly distributed in $[0,1]$, the time interval
$t\in[0,1]$ and $x^{+}=\max\{0,x\}$. We denote a finite collection of these
paths as
\[
\mathcal{S}(n,m,d)=\{X_{i}\}_{i=1}^{n},\ \text{where}\ X_{i}=\{(X_{i}%
^{(1)}(t_{j}),\cdots,X_{i}^{(d)}(t_{j}))\in\mathbb{R}^{d}\}_{j=1}^{m}.
\]
In Figure \ref{fig_BM_Davis} and \ref{fig_BM_Davis-original}, the parameters
$(n,m,d)=(10,10,2)$ are fixed and $\epsilon$ is taken from $[0,5]$. We run 100
independent experiments for each $\epsilon$. The plots are like ones in the
above example. The middle line is the median of the alignment (the similarity,
resp.) and the shadow is the range from the lower quartile to the upper
quartile of the alignment (the similarity, resp.) for the 100 collections of
sample paths.
We can see from these figures that the alignment (the dis-similarity, resp.) is
decreasing (increasing, resp.) as $\epsilon$ increases, as one would expect. From the point
view of RFI mitigation, the alignment of the empirical measure is more
relevant than that of the optimal measure. It is reasonable that as the
strength $\epsilon$ is large the empirical measure is less similar w.r.t. the
Wiener measure than the optimal measure. The alignment of the empirical measure decays faster than that of optimal measure
in our experiments. This suggests potential uses for building method for the identification of RFI based on a threshold for the alignment of the empirical measure. The preliminary results here for instance suggest that a threshold of alignment of 0.2
under the factorially-weighted signature kernel could be used in this example.
\end{example}

\begin{figure}[tbh]
\begin{centering}
\includegraphics[scale=0.45]{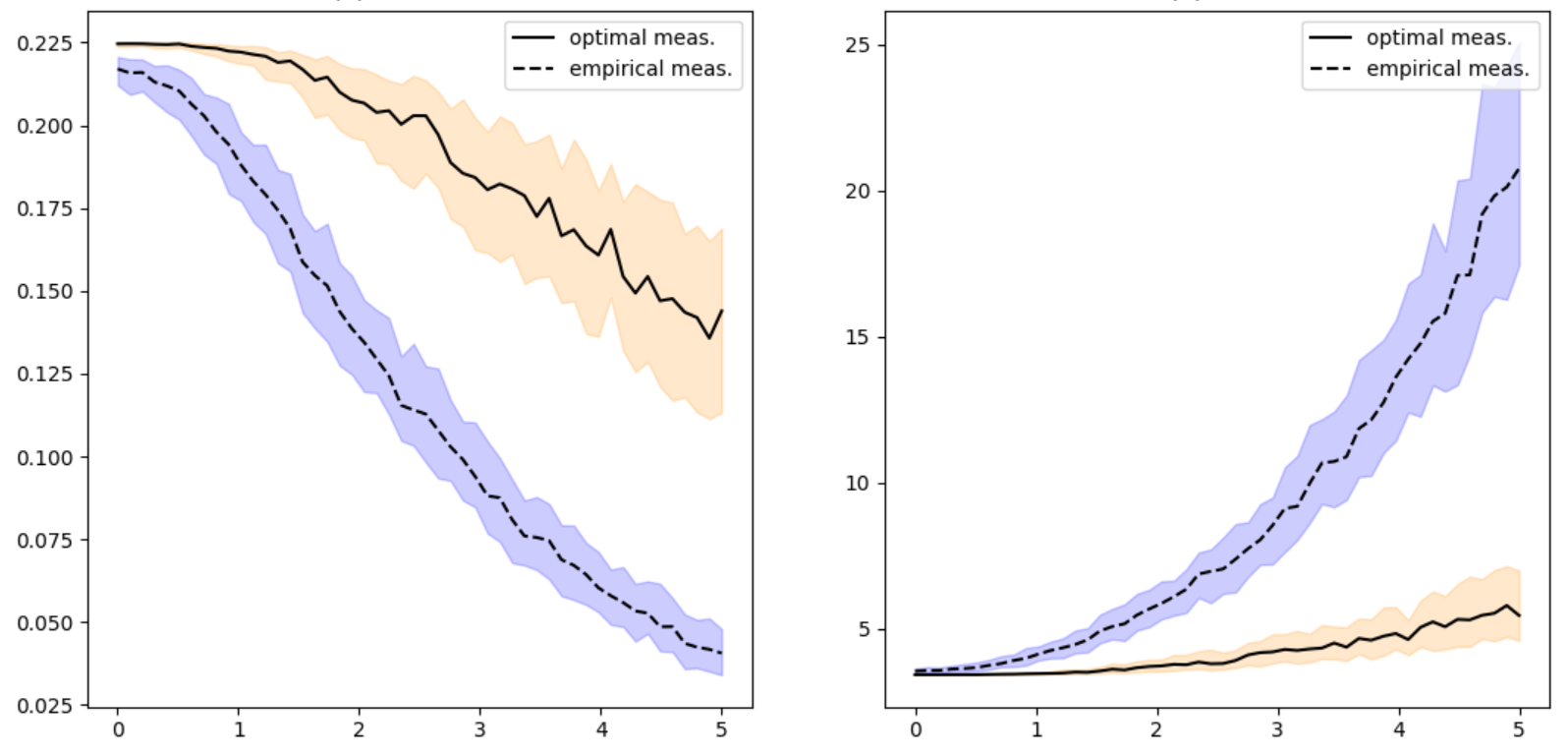}
\par\end{centering}
\caption{The case for the factorially-weighted signature kernel. (a) The left
panel shows the alignment of discrete measures and the Wiener measure for each
$\epsilon$ taken from $[0,1]$ where x-axis is the value of $\epsilon$ and
y-axis is the value of alignment. (b) The right panel shows the similarity of
discrete measures and the Wiener measure as in (a). The solid line is for the
optimal measure while the dash line is for the empirical measure.}%
\label{fig_BM_Davis}%
\end{figure}

\begin{figure}[tbh]
\begin{centering}
\includegraphics[scale=0.45]{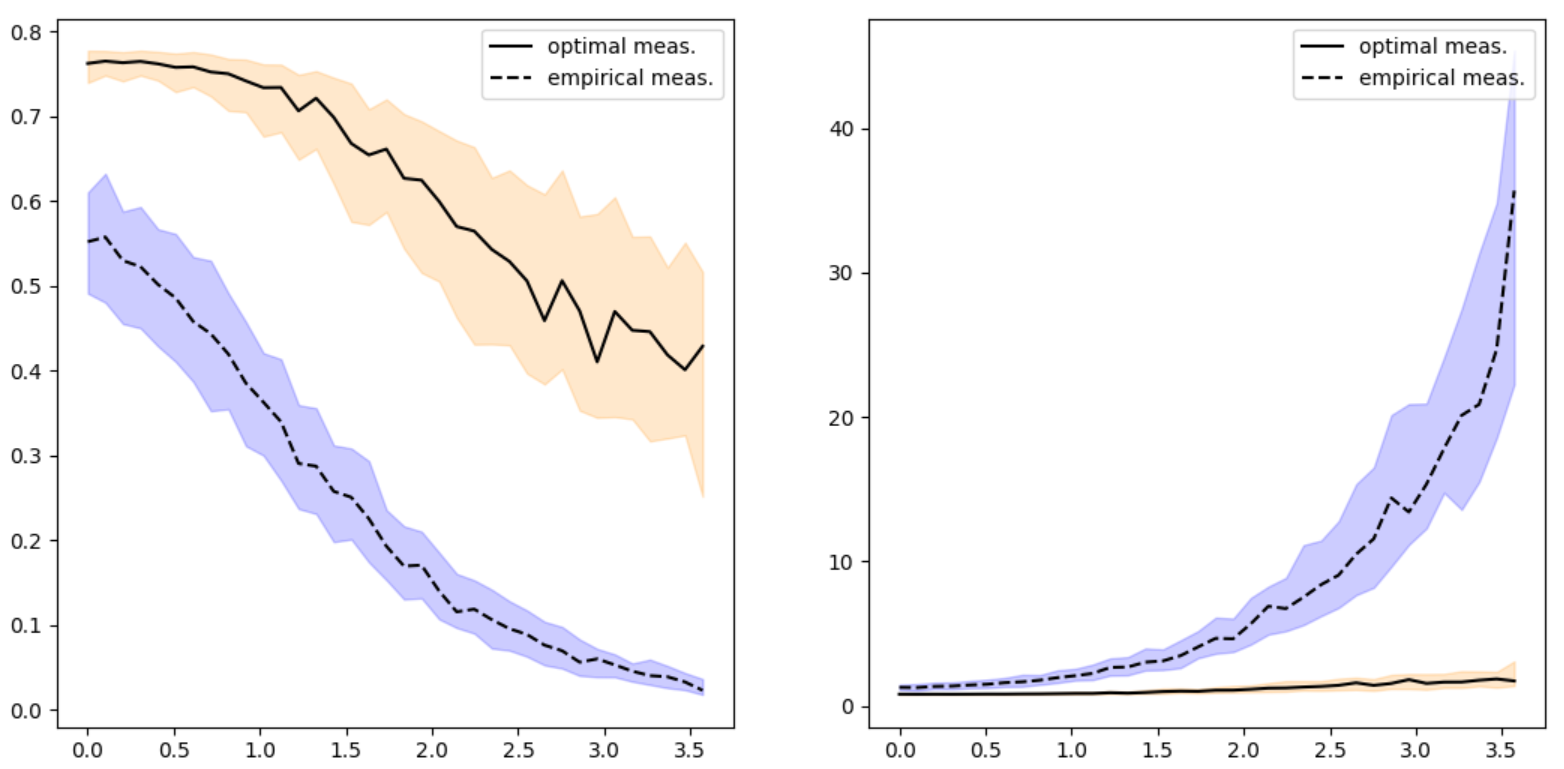}
\par\end{centering}
\caption{The same example under the original signature kernel}%
\label{fig_BM_Davis-original}%
\end{figure}

\newpage{}

\clearpage

\section*{Acknowledgements} The authors thank Cris Salvi and Bojan Nikolic respectively for discussions related to the Signature Kernel and the problem of RFI mitigation in radioastronomy.

\end{document}